\documentclass[11pt,leqno]{article}

\usepackage[active]{srcltx}

\usepackage{amsfonts,latexsym,amsmath,amssymb,amsthm}

\usepackage{fullpage}
\usepackage{dsfont}
\newtheorem{theorem}{Theorem}[section]
\newtheorem{lemma}[theorem]{Lemma}

\newtheorem{proposition}[theorem]{Proposition}
\newtheorem{corollary}[theorem]{Corollary}
\newtheorem{remark}[theorem]{Remark}

\theoremstyle{definition}

\theoremstyle{remark}

\newtheorem*{note*}{Note}

\numberwithin{equation}{section}

\makeatletter
\newcommand{\rank}{\mathop{\operator@font rank}}
\newcommand{\conv}{\mathop{\operator@font conv}}

\newcommand{\onetagright}{\tagsleft@false}
\makeatother

\newcommand{\ls}{\leqslant}
\newcommand{\gr}{\geqslant}
\renewcommand{\epsilon}{\varepsilon}

\newcommand{\E}{\mathbb{E}}

\newcommand{\R}{\mathbb{R}}

\newcommand{\e}{\varepsilon}

\newcommand{\vol}{\mathrm{vol}}

\def\e{\varepsilon}
\def\irr#1{{\rm Irr}(#1)}
\def\irrr#1#2 {\irr {#1 \mid #2}}

\usepackage{color}

\usepackage{hyperref}

\usepackage{setspace}
\begin{document}
\small

\title{\bf Functional perimeter and the dimensional Brunn-Minkowski inequality for log-concave measures}

\medskip

\author{Alexandros Eskenazis, Apostolos Giannopoulos and Natalia Tziotziou}

\date{}
\maketitle

\begin{abstract}\footnotesize
This paper is dedicated to two geometric problems associated to log-concave measures on $\R^n$. First, we study the dimensional Brunn--Minkowski inequality for even log-concave probability measures $\mu$ on $\mathbb{R}^n$ via an analytic approach based on diffusion operators and gradient estimates. We prove that for every pair of symmetric convex sets $K,L$ in $\R^n$ and every $\lambda\in(0,1)$,
$$\mu(\lambda K+(1-\lambda)L)^{c_n} \gr \lambda \mu(K)^{c_n}+(1-\lambda)\mu(L)^{c_n},$$
where $c_n\gr c/n^3\ln n$ for some absolute constant $c>0$. Secondly, we study the maximal perimeter $\Gamma(\mu)$ of an isotropic log-concave measure $\mu$, without symmetry assumptions. We prove that
$$\Gamma_n = \sup\{\Gamma(\mu): \ \mu \ \mbox{is an isotropic log-concave measure on } \R^n \} \approx n.$$
A key ingredient in both our proofs is a bound due to Eldan and Klartag (2008), which states that
$$\int_{\mathbb{R}^n} |\nabla\psi|\,d\mu \ls Cn$$ 
for every isotropic log-concave probability measure $\mu$ on $\R^n$ with density $e^{-\psi}$. 
We also present further applications of this estimate to projections of log-concave functions projections, moment and surface area measures of isotropic log-concave functions, highlighting the central role of the gradient of the logarithmic potential in high-dimensional convexity.
\end{abstract}

\section{Introduction}\label{section:1}

The classical Brunn--Minkowski inequality asserts that for every pair of nonempty compact sets $K,L$ in $\mathbb{R}^n$ and every $\lambda\in(0,1)$, we have
\begin{equation} \label{eq:brunn-minkowski}
\vol_n(\lambda K+(1-\lambda) L)^{\frac{1}{n}} \gr \lambda\vol_n(K)^{\frac{1}{n}} + (1-\lambda)\vol_n(L)^{\frac{1}{n}},
\end{equation}
where $\vol_n$ denotes the $n$-dimensional Lebesgue measure. A central problem in modern convex geometry is to understand to what extent this dimensional concavity property extends beyond Lebesgue measure.

A natural framework for such extensions is provided by log-concave measures on $\mathbb{R}^n$. Recall that a Borel measure $\mu$ with density $f=e^{-\psi}$ is log-concave if $\psi$ is convex. This class includes, on the one hand, uniform measures on convex bodies and, on the other hand, Gaussian-type distributions, and plays a fundamental role in convex geometry, analysis, and probability. The dimensional Brunn--Minkowski conjecture asks whether the concavity in \eqref{eq:brunn-minkowski} persists for all log-concave measures under symmetry assumptions. More precisely, if $\mu$ is an even log-concave measure on $\mathbb{R}^n$, does one have
\begin{equation}\label{eq:dimBM}
\mu(\lambda K+(1-\lambda) L)^{\frac{1}{n}} \gr \lambda\mu(K)^{\frac{1}{n}}+(1-\lambda) \mu(L)^{\frac{1}{n}}
\end{equation}
for every pair of symmetric convex sets $K,L$ in $\R^n$ and every $\lambda\in(0,1)$?

The conjecture was first formulated by Gardner and Zvavitch \cite{Gardner-Zvavitch-2010} in the Gaussian setting, where it was later confirmed by Eskenazis and Moschidis \cite{Eskenazis-Moschidis-2021}. The general formulation mentioned here is due to Colesanti, Livshyts and Marsiglietti \cite{CLM17}. Further important progress was obtained by Cordero-Erausquin and Rotem \cite{Cordero-Rotem-2023}, who verified \eqref{eq:dimBM} for all rotationally invariant log-concave measures. It is also known that the conjecture follows from the logarithmic Brunn--Minkowski conjecture of B\"or\"oczky, Lutwak, Yang and Zhang \cite{BLYZ12} in every fixed dimension (see \cite{Livshyts-Marsiglietti-Nayar-Zvavitch-2017}), and is therefore valid in dimension $n=2$. More recently, a series of works \cite{AL25,Cordero-Eskenazis-2025,MMRR25} have also investigated functional forms of the conjecture.

\medskip

In Section~\ref{section:3} we revisit an analytic mechanism due to Kolesnikov and E.~Milman \cite{Kolesnikov-Milman-2018, Kolesnikov-Milman-2022}, which has been the primary method towards weighted Brunn--Minkowski-type inequalities for measures on $\R^n$ under central symmetry assumptions since the later work of Kolesnikov and Livshyts \cite{Kolesnikov-Livshyts-2021}. This method reduces \eqref{eq:dimBM} to coercivity estimates for the diffusion operator
$$Lu=\Delta u-\langle \nabla u,\nabla\psi\rangle$$
associated with $\mu$. The key point is that Brunn--Minkowski inequalities can be derived from their infinitesimal versions corresponding to perturbations of convex sets which are expressed via support functions and linearization along Minkowski combinations. This leads to differential inequalities involving curvature and weighted boundary integrals, which are ultimately controlled by spectral $\Gamma_2$-type quantities of the form
$$\int_K \big(\|\nabla^2 u\|^2+\langle \nabla^2\psi\,\nabla u,\nabla u\rangle\big) \,d\mu$$
for solutions of appropriately chosen second-order elliptic equations.

Using this framework, Livshyts \cite{Livshyts-2023} proved a version of \eqref{eq:dimBM} for all even log-concave measures, with exponent $c_n = n^{-4-o_n(1)}$. Our first main result improves this bound.

\begin{theorem}\label{thm:dimBM-main}
Let $\mu$ be an even log-concave probability measure on $\mathbb{R}^n$. Then, for every pair of symmetric convex sets $K,L$ in $\mathbb{R}^n$ and every $\lambda\in (0,1)$,
\begin{equation} \label{equ:dim1}\mu(\lambda K+(1-\lambda) L)^{c_n} \gr \lambda \mu(K)^{c_n}+(1-\lambda)\mu(L)^{c_n},
\end{equation}
where
\begin{equation} c_n \gr \frac{c}{n^3 \ln n}.
\end{equation}
\end{theorem}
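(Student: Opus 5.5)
We follow the Kolesnikov--Milman infinitesimal approach, as refined by Kolesnikov--Livshyts and Livshyts. The key input is the Eldan--Klartag bound $\int|\nabla\psi|\,d\mu\ls Cn$ for isotropic $\mu$. By a standard local-to-global argument, \eqref{equ:dim1} with exponent $p=c_n$ follows once we verify the infinitesimal inequality for every symmetric convex body $K$ (smooth, with positive curvature) and every even $C^2$ perturbation $f$ of its support function $h_K$:
\begin{equation*}
\mu(K)\int_{\partial K}\big(H_\mu f^2-\langle \mathrm{II}^{-1}\nabla_{\partial K}f,\nabla_{\partial K}f\rangle\big)\,d\mu_{\partial K}\ls (1-p)\Big(\int_{\partial K} f\,d\mu_{\partial K}\Big)^2 .
\end{equation*}
Here $H_\mu$ is the weighted mean curvature and $\mathrm{II}$ the second fundamental form. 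Following Kolesnikov--Milman, we solve the Neumann problem $Lu=\frac{1}{\mu(K)}\int_{\partial K}f\,d\mu_{\partial K}$ in $K$, with $\langle\nabla u,n_x\rangle=f$ on $\partial K$. The Reilly-type formula then reduces matters to a spectral bound: for every even $u$ with $Lu\equiv$ const on $K$ and $\langle\nabla u,n\rangle$ even,
\begin{equation*}
\int_K\big(\|\nabla^2u\|^2+\langle\nabla^2\psi\nabla u,\nabla u\rangle\big)\,d\mu_K\gr p\,\frac{\big(\int_K Lu\,d\mu_K\big)^2}{\mu(K)}\,.
\end{equation*}
It suffices to bound this $\Gamma_2$ term from below by $p\,(Lu)^2\mu(K)$, with $p\approx 1/(n^3\ln n)$.

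The central estimate works as follows. Write $Lu=\Delta u-\langle\nabla u,\nabla\psi\rangle\equiv \kappa$. Since $\|\nabla^2u\|^2\gr(\Delta u)^2/n$, we need to control the term $\langle\nabla u,\nabla\psi\rangle$. By Cauchy--Schwarz,
\[
|\kappa|\,\mu(K)\ls \int_K|\Delta u|\,d\mu+\Big(\int_K\langle\nabla^2\psi\nabla u,\nabla u\rangle d\mu\Big)^{1/2}\Big(\int_K\langle(\nabla^2\psi)^{-1}\nabla\psi,\nabla\psi\rangle d\mu\Big)^{1/2}.
\]
This is useless when $\nabla^2\psi$ degenerates, so we argue instead through a Poincaré step. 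Because $u$ is even, $\nabla u$ is odd, so $\nabla u$ has $\mu_K$-mean zero. The Poincaré inequality for $\mu_K$, which is log-concave, then gives
\[
\int_K|\nabla u|^2\,d\mu_K\ls C_P(\mu_K)\int_K\|\nabla^2 u\|^2\,d\mu_K .
\]
Hence
\[
|\kappa|\mu(K)\ls \sqrt n\,\|\nabla^2u\|_{L^2}\mu(K)^{1/2}+\|\nabla u\|_{L^2(\mu_K)}\,\|\nabla\psi\|_{L^2(\mu_K)}.
\]
We then put $\mu_K$ (restricted and normalized) into isotropic position by a linear map $T$. The quantity $\int\|\nabla^2u\|^2$ transforms with distortion controlled by $\|T\|\,\|T^{-1}\|$, but we use the $T$-invariant combination $C_P\cdot\int|\nabla\psi|^2$. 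A Poincaré constant times the squared gradient of the potential is affine-invariant in the appropriate sense. For isotropic log-concave measures we bound $C_P\ls C\ln n$ (Klartag, or current KLS bounds). For the potential, Eldan--Klartag gives $\int|\nabla\psi|\ls Cn$. The boundary of $K$ contributes no potential gradient, but the restriction introduces a singular part, which is handled because $u$ solves a Neumann problem. This yields $(\kappa\mu(K))^2\ls C n^{3}\ln n\,\mu(K)\int\Gamma_2$.

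The hardest step is passing from the $L^1$ bound on $|\nabla\psi|$ to the $L^2$ quantity required. We plan to do this either by reverse Hölder for log-concave $|\nabla\psi|$, giving $\int|\nabla\psi|^2\ls Cn^2$ (one extra power of $n$, which is where $n^3$ comes from), or by using the Eldan--Klartag bound directly via $\int\langle\nabla u,\nabla\psi\rangle\ls\|\nabla u\|_\infty\int|\nabla\psi|$ with an elliptic $L^\infty$ gradient estimate. We also need to handle the non-isotropic, non-probability restriction $\mu_K$; for this we use that $\mu|_K$ is itself log-concave with potential $\psi+\mathbf 1_K^\infty$, and we apply the Eldan--Klartag estimate to it in the distributional sense, including the boundary term. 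Finally, an approximation argument removes the smoothness assumptions.
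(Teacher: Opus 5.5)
Your framework matches the paper's: the Kolesnikov--Milman criterion, the Poincar\'e inequality applied to the odd functions $\partial_i u$, passage to isotropic position, Klartag's bound $\vartheta^2\lesssim\ln n$, and Eldan--Klartag. Your Cauchy--Schwarz/Poincar\'e estimate is a valid, slightly cruder substitute for Livshyts's $v_t=u-\frac t2|x|^2$ trick. You also do not need any ``affine-invariant combination'': the concavity power is itself linearly invariant (Lemma~\ref{lem:posi}), so you can apply the criterion directly to $T_\ast\mu$ and $TK$.

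\textbf{The gap.} It lies in the step you flag as hardest. Run on all of $TK$, your estimate yields
\[
p\gtrsim 1\Big/\Bigl(n+\vartheta_\nu^2\int|\nabla\psi_1|^2\,d\nu\Bigr).
\]
The integral $\int|\nabla\psi_1|^2\,d\nu$ is \emph{not} bounded by any function of $n$ over isotropic log-concave $\nu$, because there is no reverse H\"older inequality for $|\nabla\psi|$. Already in dimension one, the isotropic densities $f_p\propto e^{-c_p|x|^p}$ with $\psi_p=-\ln f_p$ satisfy $\int_{\R}|\psi_p'|^2f_p\,dx\sim p/3\to\infty$; this is the Remark following the proof of Theorem~\ref{thm:gradient-upper}. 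Taking $\mu=f_p^{\otimes n}$ and $K$ a huge cube shows that restricting to $K$ does not help. The $L^1$ bound is the optimal integrability.

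Your count is also off. A bound $\int|\nabla\psi|^2\ls Cn^2$ together with $\vartheta^2\lesssim\ln n$ would give $c_n\gtrsim 1/(n^2\ln n)$, not $n^{-3}$.

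Your alternative route via an $L^\infty$ gradient estimate is not viable either. For $n\gr 2$, $\|\nabla u\|_\infty$ is not controlled by $\int\|\nabla^2u\|^2\,d\nu$, and no elliptic estimate with polynomial dependence on $n$ is available for arbitrary convex domains and log-concave weights.

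\textbf{The missing idea: localize to a good convex subset.} Because $Lu\equiv\kappa$, your argument runs verbatim on a symmetric convex set $A\subseteq TK$:
\[
\kappa\,\nu(A)=\int_A\Delta u\,d\nu-\int_A\langle\nabla u,\nabla\psi_1\rangle\,d\nu .
\]
The Poincar\'e inequality for $\nu_A$ applies to each $\partial_iu$ because $A$ is symmetric, and at the end you discard $\int_{TK\setminus A}\|\nabla^2u\|^2\,d\nu\gr0$.

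Choose $A=\frac{n-1}{n}R_{3n}(\nu)$. Then $\nu(A)\gr c$. Since $R_{3n}(\nu)\supseteq\frac13B_2^n$ and $\psi_1$ is convex with its minimum at $0$, you get the pointwise bound $|\nabla\psi_1|\ls 9n^2$ on $A$. Hence, by Eldan--Klartag,
\[
\int_A|\nabla\psi_1|^2\,d\nu\ls 9n^2\int|\nabla\psi_1|\,d\nu\ls Cn^3,
\]
and this is where the $n^3$ comes from.

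Convexity of $A$ is what lets E.~Milman's stability theorem give $\vartheta_{\nu_A}\lesssim\vartheta_\nu\lesssim\sqrt{\ln n}$, since that theorem needs $\nu_A$ to be log-concave. This is why the Markov set $\{|\nabla\psi_1|\ls Cn\}$, which is generally not convex, cannot be used instead. The localization is exactly the paper's route (Theorem~\ref{th:L-reduction} and Propositions~\ref{prop:large-set} and~\ref{prop:large-set-3}).
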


Our approach is based on a refined analysis of the mechanism underlying the method of \cite{Kolesnikov-Milman-2018,Kolesnikov-Milman-2022,Livshyts-2023}. A central role is played by the control of the gradient of the potential $\psi$. More precisely, one seeks large subsets on which $|\nabla\psi|$ is bounded, since such bounds translate into coercivity estimates for the operator $L$, as was already observed in \cite{Livshyts-2023}. This leads naturally to the study of the quantity
\begin{equation}\label{eq:main-quantity}
\int_{\mathbb{R}^n} |\nabla \psi(x)|\, d\mu(x),
\end{equation}
for isotropic log-concave probability measures, which is interpreted as the \emph{functional perimeter} of the log-concave function $e^{-\psi}$ (see \cite{Colesanti-Fragala-2013, Cordero-Klartag-2015, Rotem-2022, Rotem-2023}). At the same time, we revisit the argument of Livshyts, clarifying some subtle points and providing the necessary justifications. This yields a complete and self-contained proof of Theorem~\ref{thm:dimBM-main}, together with an improved exponent.

To describe our second main result, for any convex body $A$ in $\mathbb{R}^n$, consider the $\mu$-perimeter of $A$ defined by
$$\mu^+(\partial A)=\liminf_{\epsilon \to 0^+}\frac{\mu(A+\epsilon B_2^n)-\mu(A)}{\epsilon}.$$
If $\mu$ admits a density $f$ with respect to Lebesgue measure, then
$$\mu^+(\partial A)=\int_{\partial A}f(x)\,d\mathcal{H}^{n-1}(x).$$
The \emph{maximal perimeter} of the measure $\mu$ is defined as
$$\Gamma(\mu) = \sup \{ \mu^+(\partial A): \ A \mbox{ is a convex body in } \R^n\}.$$
This quantity has been studied for the Gaussian measure by Ball and Nazarov \cite{Bal93,Naz03} and for rotationally invariant log-concave measures by Livshyts \cite{Liv13,Liv14,Liv15}. In a more recent work \cite{Liv21}, Livshyts studied the maximal perimeter $\Gamma(\mu)$ of isotropic measures $\mu$ establishing a lower bound which, together with the resolution of the thin shell conjecture by Klartag and Lehec \cite{KL25} and the result of Nazarov \cite{Naz03}, yields
$$\gamma_n = \inf\{\Gamma(\mu): \ \mu \ \mbox{is an isotropic log-concave probability measure on } \R^n \} \approx n^{1/4}.$$
For the corresponding supremal quantity
$$\Gamma_n = \sup\{\Gamma(\mu): \ \mu \ \mbox{is an isotropic log-concave probability measure on } \R^n \},$$
an upper bound $\Gamma_n \ls Cn^2$ was established in \cite{Liv21} which was recently improved to $\Gamma_n\ls Cn^{3/2}$ in \cite{BGHT}. We provide 
an asymptotically sharp bound for $\Gamma_n$.

\begin{theorem} \label{thm:weight-per}
For every $n\in\mathbb{N}$, we have $\Gamma_n \approx n$.
\end{theorem}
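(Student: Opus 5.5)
The proof has two halves: an upper bound $\Gamma_n \ls Cn$ via the Eldan--Klartag estimate $\int_{\R^n}|\nabla\psi|\,d\mu\ls Cn$ quoted in the abstract, and a lower bound $\Gamma_n \gr cn$ via an explicit example.

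\emph{Upper bound.} Let $\mu$ be isotropic log-concave with density $f=e^{-\psi}$, and let $A$ be a convex body. By approximation we may assume $\psi$ is smooth and $A$ has smooth boundary. I would use a divergence-theorem argument with a vector field tied to $A$. The simplest choice is the constant field $\theta$ in each direction, but that only controls $\int_{\partial A}f|\langle\nu,\theta\rangle|$ and then requires averaging over $\theta\in S^{n-1}$, losing $\sqrt n$. Instead I would take $V(x)=\nabla h(x)$, where $h$ is the distance-type function $h(x)=\mathrm{dist}(x,\R^n\setminus A)$. A cleaner route is to integrate over the outer parallel sets. For $t>0$ let $A_t=A+tB_2^n$. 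Then
\[
\mu(A_t)-\mu(A)=\int_0^t \mu^+(\partial A_s)\,ds,
\]
and on $\R^n\setminus A$ the function $d(x)=\mathrm{dist}(x,A)$ is $1$-Lipschitz with $|\nabla d|=1$ and $\Delta d\gr 0$, by convexity of $A$. Hence for $\phi$ compactly supported outside $A$, the divergence theorem applied to the field $f\nabla d$ gives
\[
\int_{\partial A} f \;=\; -\int_{\R^n\setminus A}\mathrm{div}(f\nabla d)\,\phi\,dx + (\text{terms from }\phi)
\;=\;\int_{\R^n\setminus A}\big(\langle\nabla\psi,\nabla d\rangle-\Delta d\big) f\,dx + \cdots .
\]
Taking the cutoff $\phi\to 1$ and using $\Delta d\gr 0$ together with $|\nabla d|=1$ gives
\[
\mu^+(\partial A)\ls \int_{\R^n\setminus A}|\nabla\psi|\,d\mu\ls \int_{\R^n}|\nabla\psi|\,d\mu\ls Cn.
\]
Precisely, $\int_{\R^n\setminus A}\mathrm{div}(f\nabla d)\,dx=-\int_{\partial A}f$, since the outward normal of the complement is $-\nu$, $\nabla d=\nu$ on $\partial A$, and the boundary term at infinity vanishes because $f$ decays exponentially. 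The distributional Laplacian of $d$ is a nonnegative measure outside $A$, and its singular part is harmless since $d$ is convex there.

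\emph{Main obstacle.} The main obstacle is justifying this divergence identity rigorously. The function $d$ is only $C^{1,1}$ outside $A$, and $\psi$ may be nonsmooth or infinite outside the support of $\mu$. When $\mu$ is supported on a convex body $K$, the boundary of $K$ contributes a term with the right sign, since $f$ jumps down to $0$. I would handle all this by convolving $\mu$ with a small Gaussian, which keeps it log-concave and almost isotropic. The Eldan--Klartag bound is stable under this perturbation, and $\mu^+(\partial A)$ is lower semicontinuous in the appropriate sense. One could also avoid $d$ altogether by using the parallel-set formula for $\mu(A_t)$ and computing $\frac{d}{dt}$ of $\int_{\partial A_t}f$, but the divergence approach is the one I would try first.

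\emph{Lower bound.} The lower bound $\Gamma_n\gr cn$ needs an example. Take $\mu$ the uniform measure on an isotropic cube $[-\sqrt3,\sqrt3]^n$, and let $A$ be the cube shrunk slightly, $A=(1-\delta)[-\sqrt3,\sqrt3]^n$. Its surface area is $2n(2\sqrt3)^{n-1}(1-\delta)^{n-1}$ and the density is $(2\sqrt3)^{-n}$, so
\[
\mu^+(\partial A)=\frac{n}{\sqrt3}(1-\delta)^{n-1}\to \frac{n}{\sqrt3}\quad\text{as }\delta\to0.
\]
Alternatively, the product exponential measure also works, with $A$ a suitable orthant-shaped box, since for it $|\nabla\psi|\approx\sqrt n$ is not extremal. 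The cube suffices. Combining the two bounds gives $\Gamma_n\approx n$.
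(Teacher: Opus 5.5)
Your proof is correct, and your upper bound takes a genuinely different route from the paper's.

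\textbf{The paper's route.} The paper never builds a vector field adapted to $A$. It first proves Corollary~\ref{cor:H}, namely $\int_0^\infty\mathcal{H}^{n-1}(\partial\{f\geq t\})\,dt\leq Cn$. This comes from Theorem~\ref{th:nu-f}: Eldan--Klartag is applied to isotropic renormalizations of the Moreau envelopes of $\psi$, and the limit is taken via Falah--Rotem's cosmic convergence of $(\mu_{F_\lambda},\nu_{F_\lambda})$. That bound is then combined with Rotem's generalized co-area formula \eqref{eq:gen-coarea}. Finally the paper writes $\int_{\partial A}f\,d\mathcal{H}^{n-1}=\int_0^\infty\mathcal{H}^{n-1}(\partial A\cap K_t)\,dt$ and uses monotonicity of perimeter for convex sets, $\mathcal{H}^{n-1}(\partial A\cap K_t)\leq\mathcal{H}^{n-1}(\partial K_t)$.

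\textbf{Your route.} For smooth positive densities you prove the stronger, $A$-dependent inequality $\int_{\partial A}f\,d\mathcal{H}^{n-1}\leq\int_{\R^n\setminus A}|\nabla\psi|\,d\mu$. Here $|\nabla d_A|=1$ and $\Delta d_A\geq 0$ do the job that monotonicity of perimeter does in the paper; both ultimately express the convexity of $A$. You reduce general $\mu$ to this case by Gaussian convolution. After rescaling by $(1+\varepsilon^2)^{-1/2}$ the convolved measure is again isotropic, so Eldan--Klartag gives $\int|\nabla\psi_\varepsilon|\,d\mu_\varepsilon\leq Cn$.

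\textbf{Comparison.} Your argument is more elementary: it needs no surface area measures, no cosmic convergence and no generalized co-area formula. The paper's argument yields the structural Corollary~\ref{cor:H}, which it reuses in \S 6.3. It also accounts exactly for the jump of $f$ across $\partial K_f$ through $\nu_f$.

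\textbf{Remarks to correct.} Neither of the following affects your final argument.
\begin{enumerate}
\item If you applied the divergence theorem directly to a density that jumps at $\partial K_f$, the boundary term $\int_{\partial K_f\setminus A}f\,\langle\nabla d_A,n_{K_f}\rangle\,d\mathcal{H}^{n-1}$ does not have a favorable sign. When $A\subseteq K_f$ it is nonnegative, since the nearest point of $A$ lies in $K_f$. It is only bounded by $\int_{\partial K_f}f\,d\mathcal{H}^{n-1}=\nu_f(S^{n-1})$, which is precisely the extra quantity the paper controls in Theorem~\ref{th:nu-f}. Your smoothing step absorbs it automatically, because the jump becomes part of $\int|\nabla f_\varepsilon|\,dx$.
\item For the same reason your orthant aside works only for the one-sided exponential. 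There the order-$n$ perimeter comes from $\nu_f(S^{n-1})=n$. For the symmetric product exponential your own inequality gives $\Gamma(\mu)\leq\sqrt{2n}$.
\item The limit $\varepsilon\to0$ is lower semicontinuous only up to a factor $2$. At smooth points $x\in\partial A\cap\partial K_f$ you only have $\liminf_{\varepsilon\to0}f_\varepsilon(x)\geq\tfrac12 f(x)$, with equality for uniform measures on convex bodies. Fatou therefore gives $\int_{\partial A}f\,d\mathcal{H}^{n-1}\leq 2\liminf_{\varepsilon\to0}\int_{\partial A}f_\varepsilon\,d\mathcal{H}^{n-1}$, which is harmless for $\Gamma_n\approx n$.
\end{enumerate}

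Your cube example for the lower bound is correct and matches the paper.
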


Both our main results rely upon the following asymptotically sharp upper bound for the $L_1$ norm of the gradient of the logarithmic potential \eqref{eq:main-quantity}, which was first proven in \cite[Lemma~11]{EK08}.

\begin{theorem} [Eldan and Klartag] \label{thm:gradient-upper}
Let $\mu$ be an isotropic log-concave probability measure on $\mathbb{R}^n$ with density $f=e^{-\psi}$. Then,
\begin{equation} \label{eq:ek}
\int_{\mathbb{R}^n} |\nabla \psi(x)|\, d\mu(x) \ls Cn,
\end{equation}
where $C>0$ is an absolute constant.
\end{theorem}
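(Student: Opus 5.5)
Since $|\nabla\psi|=\sup_{|\theta|=1}\langle\nabla\psi,\theta\rangle$, I would first aim for the $L_2$ version $\int|\nabla\psi|^2\,d\mu$. Integrating by parts (valid for log-concave densities, where $e^{-\psi}$ decays exponentially and $\psi$ is locally Lipschitz on the interior of the support) gives $\int|\nabla\psi|^2\,d\mu=\int\Delta\psi\,d\mu$. This quantity is the Fisher information and can be infinite, for instance for uniform measures on convex bodies, where the density has a jump at the boundary. So the $L_1$ norm must be handled directly, with boundary terms counted as perimeter. I would therefore run the argument for smooth positive densities, via convolution with a small Gaussian, and pass to the limit at the end. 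In the limit the $L_1$ quantity correctly picks up the boundary contribution $\int_{\partial K} f\,d\mathcal H^{n-1}$ in the case of a jump.

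The key identity I would use is $\int \langle\nabla\psi(x),x\rangle\,d\mu(x)=n$. This follows from integrating by parts $\operatorname{div}(x e^{-\psi})=n e^{-\psi}-\langle x,\nabla\psi\rangle e^{-\psi}$, using the decay of $f$. For convex $\psi$ with minimum at $x_0$, we have $\langle\nabla\psi(x),x-x_0\rangle\gr\psi(x)-\psi(x_0)\gr0$. So the radial component of $\nabla\psi$ about the minimizer is nonnegative, with total integral $n+\langle\int\nabla\psi\,d\mu,x_0\rangle=n$, because $\int\nabla\psi\,d\mu=0$. The task is to pass from this radial control to control of the full gradient norm.

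To do this, I would split according to the size of $|x-x_0|$. Isotropy and standard log-concave facts give $|x_0|\ls Cn$ and $f(x_0)$ comparable to $\max f$. The plan is a layer-cake argument using the sublevel sets $K_t=\{\psi\ls \psi(x_0)+t\}$, which are convex. By the coarea formula,
\[
\int|\nabla\psi|\,d\mu=\int_0^\infty e^{-\psi(x_0)-t}\,\mathcal H^{n-1}(\partial K_t)\,dt .
\]
So it suffices to bound surface areas of level sets. For a convex body containing $x_0$, $\mathcal H^{n-1}(\partial K_t)\ls \frac{n\,\vol(K_t)}{r_t}$, where $r_t$ is the inradius about $x_0$. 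This is where isotropy enters. Since $\mu$ is isotropic and log-concave, $K_t$ for $t$ of order $n$ contains a ball of radius $c$ around $x_0$ (Fradelizi-type estimates: the level set at height $e^{-n}\max f$ contains a constant-radius ball because the covariance is the identity). For small $t$, the level sets may be thin. There I would instead use monotonicity, $\vol(K_t)\ls e^{t}\vol(K_n)$ scaled, combined with the homothety $K_t\supseteq x_0+\frac{t}{n}(K_n-x_0)$ for $t\ls n$, which follows from convexity of $\psi$. This gives $r_t\gr ct/n$, so the bound becomes $\mathcal H^{n-1}(\partial K_t)\ls \frac{n^2\vol(K_t)}{ct}$. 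Integrating this against $e^{-t}$ near $0$ needs care. The better estimate uses $\vol(K_t)\ls (t/n)^{\,n}$-type growth together with $\vol(K_t)\ls C^n e^{t}$ times the total mass; the small-$t$ region then contributes $O(n)$. The region $t\gr n$, where $r_t\gr c$, contributes $\frac{n}{c}\mu(\psi\gr\ldots)\ls Cn$.

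I expect the main obstacle to be exactly the inradius and volume bookkeeping for $K_t$ at moderate $t$. The goal there is to obtain $\int e^{-t}\,n\vol(K_t)/r_t\,dt\ls Cn$ without losing a factor $n$. If the homothety approach loses this factor, my fallback is to compare directly with $\int\langle\nabla\psi,x-x_0\rangle\,d\mu=n$ on the region where $|x-x_0|\gr c$. On that region, the angle between $\nabla\psi$ and $x-x_0$ is controlled by the inradius bound: for a convex body containing the ball $B(x_0,c)$, the outer normal $\nu$ satisfies $\langle\nu,x-x_0\rangle\gr c$, which gives $|\nabla\psi|\ls c^{-1}\langle\nabla\psi,x-x_0\rangle$ on $\{\psi\gr\psi(x_0)+n\}$. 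The remaining region has bounded $\psi$-oscillation, so it can be treated by the coarea formula with the perimeter bound $n\vol/r$.
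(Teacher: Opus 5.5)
Your argument has a genuine gap, and it sits exactly where $\mu$ carries its mass.

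A first, repairable, issue is the center. Isotropy gives a ball of constant radius around the barycenter, not around the mode (Lemma~\ref{lem:gamma-ball}: $R_{3n}(\mu)=\{\psi\ls\psi(0)+3n\}\supseteq\frac13B_2^n$), and $x_0$ can lie on the boundary of the support. For the isotropic density $\prod_{i=1}^n e^{-(x_i+1)}\mathds{1}_{\{x_i\gr -1\}}$ the mode is the corner $(-1,\dots,-1)$, so no $K_t$ contains any ball around it. After Gaussian smoothing, the radius around the new mode degenerates with the smoothing parameter. You do not need $x_0$ anyway, since $\int\langle\nabla\psi,x\rangle\,d\mu=n$ already holds with center $0$.

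The real problem is the region $R_{3n}(\mu)$, i.e.\ the levels $\{f\gr s\}$ with $s>e^{-3n}f(0)$. This region has $\mu$-measure at least $1-e^{-3n/4}$ by Proposition~\ref{prop:r-2}, and your fallback controls only its complement. Inside it, the homothety bound $r_t\gtrsim t/n$ leads to $\int_0^{n}e^{-t}\,\frac{n^2\vol(K_t)}{t}\,dt$. This diverges whenever $\vol(K_t)$ does not tend to $0$ as $t\to0$, e.g.\ for (smoothed) uniform measures on isotropic convex bodies, where $K_t$ is essentially the whole body for all small $t>0$. The suggested $(t/n)^n$-type upper bound on $\vol(K_t)$ does not hold: convexity only gives the reverse inequality $\vol(K_t)\gr (t/n)^n\vol(K_n)$. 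Nor does ``bounded oscillation'' help, because $\psi$ oscillates by order $n$ on $R_{3n}(\mu)$, i.e.\ $f$ varies by a factor $e^{O(n)}$. The crude comparison $\mathcal{H}^{n-1}(\partial\{f\gr s\})\ls S(R_{3n}(\mu))\ls 3n\vol(R_{3n}(\mu))\ls 3ne^{3n}/f(0)$ only yields a bound of order $ne^{4n}$ for this region.

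The missing idea is to use your supporting-hyperplane inequality at every point, testing it against a point of the ball rather than against the center. With $y=\frac13\nabla\psi(x)/|\nabla\psi(x)|\in\frac13B_2^n\subseteq R_{3n}(\mu)$ you get
\[
\tfrac13|\nabla\psi(x)|\ls\langle\nabla\psi(x),x\rangle+\psi(y)-\psi(x)\ls\langle\nabla\psi(x),x\rangle+3n+\psi(0)-\psi(x).
\]
Your fallback is the case $\psi(x)\gr\psi(0)+3n$, where the correction term is nonpositive. On $R_{3n}(\mu)$ the correction is positive, but its integral is controlled by Jensen's inequality: $\int\psi\,d\mu=-\int f\ln f\gr-\ln f(0)=\psi(0)$, since $\ln f$ is concave and the barycenter is $0$. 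Integrating, and using $\int\langle\nabla\psi,x\rangle\,d\mu\ls n$, gives $\int|\nabla\psi|\,d\mu\ls 12n$.

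This is exactly inequality \eqref{eq:main-14}. The paper derives it by comparing two first variations, using $e^{-3n}f(0)\mathds{1}_{\frac13B_2^n}\ls f$: that of $\int f\star\big(t\cdot(e^{-3n}f(0)\mathds{1}_{\frac13B_2^n})\big)$ (Lemma~\ref{lem:alonso} and Fatou) and that of the dilation $\int f\star(t\cdot f)$ (Lemma~\ref{lem:col-frag}). It then applies the same Jensen step. The entropy term $\int f\ln\big(f/(e^{-3n}f(0))\big)\ls 3n$ is what pays for the levels above $e^{-3n}f(0)$, and no inradius bookkeeping or smoothing is required.
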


The fact that Theorem \ref{thm:gradient-upper} was proved in \cite{EK08} was brought to our attention by B.~Klartag after the first online posting of the present paper. In Section \ref{section:5} below, we present a new proof of this result which was obtained independently of the work of Eldan and Klartag. Our proof follows a functional and variational approach. The key idea is to study infinitesimal perturbations of $f$ via the Asplund product and dilation, and to identify the first variation both at the pointwise level and after integration. This leads to inequalities connecting $\int |\nabla \psi|\,d\mu$ with entropy-type expressions involving $f$. Combining these estimates with isotropic normalization and convexity arguments yields the desired linear bound for \eqref{eq:main-quantity}. We refer to Section \ref{section:5} for further remarks related to Theorem \ref{thm:gradient-upper}, including lower bounds and improved bounds under additional symmetry (e.g., in the radial case, see Proposition \ref{prop:radial}).

\smallskip

It follows readily from Theorem \ref{thm:gradient-upper} and Markov's inequality that if $\mu$ is an isotropic log-concave probability measure on $\mathbb{R}^n$, then there exists a Borel set $A$ in $\mathbb{R}^n$ with $\mu(A)\gr c>0$ such that
$$|\nabla\psi(x)| \ls Cn \quad \text{for all } x\in A,$$
where $c,C>0$ are absolute constants. A crucial difficulty, which is pertinent to Theorem \ref{thm:dimBM-main}, is that such sets need not be convex, which prevents a direct application of the analytic machinery underlying the approach of \cite{Livshyts-2023}. This reveals a fundamental obstruction: further progress on the dimensional Brunn--Minkowski conjecture requires the construction of large symmetric \emph{convex} subsets on which $|\nabla\psi|$ is well controlled.

\medskip

Our final contribution consists of a series of applications and reformulations of Theorem~\ref{thm:gradient-upper}, highlighting its geometric and functional consequences. A unifying principle is that the quantity $\int |\nabla\psi|\,d\mu$ governs a range of geometric features associated with log-concave functions.

In \S 6.1, Theorem~\ref{thm:gradient-upper} is interpreted in the framework of moment measures. While it is known that the moment measure $\mu_f$ 
of a log-concave function $f$ with finite positive integral has finite first moment under mild assumptions, a fact that plays 
a central role in the characterization of moment measures established by Cordero-Erausquin and Klartag \cite{Cordero-Klartag-2015}, the estimate \eqref{eq:ek} yields 
the sharp quantitative bound
$$\int_{\mathbb{R}^n} |y|\,d\mu_f(y) \ls Cn,$$
for isotropic log-concave functions $f$. 

In \S 6.2, the estimate of Theorem~\ref{thm:gradient-upper} is applied to the study of the pair of surface area measures $(\mu_f,\nu_f)$ associated to a log-concave function, 
in the sense of functional convexity (see \cite{Rotem-2022, Rotem-2023}).  Here $\mu_f$ coincides with the moment measure if $f$ is essentially continuous, while $\nu_f$ captures boundary contributions through the Gauss map of the support of $f$ and appears naturally in first variation formulas of Minkowski type. Building on the previous gradient estimates, we derive  the following general theorem.

\begin{theorem}\label{th:nu-f}Let $\mu_{f}$ and $\nu_{f}$ be the surface area measures of an isotropic log-concave function $f$ on $\mathbb{R}^n$.
Then,
$$\int_{\mathbb{R}^n} |y| \, d\mu_f(y)+\nu_f(S^{n-1})\ls Cn$$
for some absolute constant $C>0$.
\end{theorem}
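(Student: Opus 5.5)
We need to bound the two pieces separately, using Rotem's description of the surface area measures $(\mu_f,\nu_f)$ of a log-concave function $f=e^{-\psi}$. Let $K=\overline{\mathrm{supp} f}$. Then $\mu_f$ is the push-forward of $f\,dx$ under $\nabla\psi$, defined on the interior of $K$. The measure $\nu_f$ is the push-forward of $f\,d\mathcal{H}^{n-1}|_{\partial K}$ under the Gauss map $n_K$. Consequently
$$\int_{\R^n}|y|\,d\mu_f(y)=\int_{\R^n}|\nabla\psi|\,d\mu \quad\text{and}\quad \nu_f(S^{n-1})=\int_{\partial K}f\,d\mathcal{H}^{n-1},$$
where $\mu=f\,dx$ is isotropic. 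The first term is at most $Cn$ by Theorem~\ref{thm:gradient-upper}, so the remaining task is to show that the boundary mass of $f$ on $\partial K$ is at most $Cn$.

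For the boundary term I would use the divergence theorem. Apply it to the vector field $V(x)=f(x)\,w(x)$ on $K$, with $w$ chosen so that $\langle w,n_K\rangle$ is bounded below on $\partial K$. A natural choice is $w(x)=x-x_0$ with $x_0$ the barycenter, which is $0$ by isotropy. Then $\langle x,n_K(x)\rangle=h_K(n_K(x))>0$, but this quantity is not bounded below uniformly, so a cleaner choice is needed. Instead, use the coarea/first-variation identity directly:
$$\int_{\partial K} f\,d\mathcal{H}^{n-1}=\lim_{\e\to0}\frac{1}{\e}\int_{(K+\e B_2^n)\setminus K}\tilde f,$$
where $\tilde f$ is the extension of $f$ by boundary values. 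This reduces the problem to showing that $\int_{\partial K}f \le Cn$, which I would do via the divergence theorem with a unit vector field. For any fixed $\theta\in S^{n-1}$,
$$\int_{\partial K} f\,\langle\theta,n_K\rangle\,d\mathcal{H}^{n-1}=\int_K\partial_\theta f\,dx=-\int_K\langle\nabla\psi,\theta\rangle\,d\mu.$$
Averaging $|\cdot|$ over $\theta\in S^{n-1}$ and using $\int_{S^{n-1}}|\langle\theta,u\rangle|\,d\sigma\approx n^{-1/2}$ only bounds the averages of signed quantities, which gives no control because of cancellations. The better route is therefore the field $w(x)=x$ together with a truncation.

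To carry this out, write $\mathrm{div}(fx)=nf-\langle\nabla\psi,x\rangle f$. Then
$$\int_{\partial K}f\,h_K(n_K)\,d\mathcal{H}^{n-1}=n-\int\langle\nabla\psi,x\rangle\,d\mu\ls n.$$
The inequality holds because $\langle\nabla\psi(x),x\rangle\gr\psi(x)-\psi(0)$ by convexity. This is bounded below after noting that $\int(\psi-\psi(0))\,d\mu\gr -n$-type bounds hold; the standard fact $\psi(0)\ls\min\psi+n$ for centered log-concave densities gives $\int\langle\nabla\psi,x\rangle\,d\mu\gr -Cn$, hence $\int_{\partial K}f\,h_K(n_K)\ls Cn$.

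It remains to remove the weight $h_K(n_K)$, which can be small on the part of $\partial K$ close to the origin. Split $\partial K$ according to whether $h_K(n_K(x))\gr c$. On the part where $h_K\gr c$, the previous bound gives mass at most $Cn/c$. On the part where $h_K(n_K(x))<c$, the supporting hyperplane lies at distance less than $c$ from the barycenter. By Grünbaum-type estimates for isotropic log-concave measures, together with the fact that marginals have density bounded by an absolute constant, the density $f$ near such boundary points is controlled. Concretely, for $\theta=n_K(x)$, the boundary mass of $f$ on the face region with normal near $\theta$ equals the one-dimensional marginal density at its endpoint $h_K(\theta)$. Isotropic log-concave marginals have densities bounded by an absolute constant, and their endpoint lies at distance at least $c'$ from $0$. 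This shows that $h_K(\theta)\gr c'$ for every $\theta$, since the support of an isotropic log-concave measure contains a ball of radius $c'$ around the barycenter. Hence $h_K(n_K)\gr c'$ everywhere, and the weighted bound gives $\nu_f(S^{n-1})\ls Cn/c'$. The main obstacle is justifying this inradius bound and the divergence theorem for a possibly unbounded $K$ and nonsmooth $f$; I would handle both by approximating with $K\cap RB_2^n$ and using the exponential decay of $f$.
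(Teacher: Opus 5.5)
Your argument is correct, and it takes a genuinely different route from the paper.

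\textbf{How the paper argues.} The paper never works on $\partial K_f$ directly. It replaces $f$ by renormalized Moreau envelopes $F_\lambda$, which are $C^1$, positive and isotropic, so $\nu_{F_\lambda}=0$. It applies Theorem~\ref{thm:gradient-upper} to these, and passes to the limit via the Falah--Rotem theorem on cosmic convergence of $(\mu_{F_\lambda},\nu_{F_\lambda})$, tested against $\xi(y)=|y|$, whose recession function is $\overline{\xi}\equiv 1$. In this picture $\nu_f(S^{n-1})$ is the part of $\int |y|\,d\mu_{F_\lambda}$ that escapes to infinity.

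\textbf{How you argue.} You bound the two terms separately.
\begin{enumerate}
\item For the first term, applying Theorem~\ref{thm:gradient-upper} directly to $f$ is legitimate: the proof in Section~\ref{section:5} uses only the a.e.\ gradient and Fatou's lemma, with no continuity of $f$ required.
\item For the second term, you use the functional analogue of $n\vol_n(K)=\int h_K\,dS_K$, namely $\int_{S^{n-1}} h_{K_f}\,d\nu_f = n-\int\langle x,\nabla\psi\rangle\,d\mu$. You combine it with $\langle x,\nabla\psi(x)\rangle\gr \psi(x)-\psi(0)\gr -n$ and a lower bound on the inradius of $K_f$. In fact the Jensen step in the proof of Theorem~\ref{thm:gradient-upper} gives $\int(\psi-\psi(0))\,d\mu\gr 0$, so the right-hand side is at most $n$.
\end{enumerate}
Your route avoids the approximation and the Falah--Rotem theorem, and it gives explicit constants. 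It also shows that the boundary term needs no Eldan--Klartag input at all. The paper's route, in exchange, sidesteps every boundary-regularity issue in one stroke.

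\textbf{Two points to tighten.}
\begin{enumerate}
\item The identity you flag as the main technical obstacle needs no truncation. It is exactly Rotem's formula \eqref{eq:Rotem} with $g=f$, combined with two facts: $\delta(f,f)=n+\int f\ln f$ from Lemma~\ref{lem:col-frag}, and the Fenchel--Young equality $\mathcal{L}\psi(\nabla\psi(x))=\langle x,\nabla\psi(x)\rangle-\psi(x)$ a.e.\ on $K_f$.
\item Your justification of $h_{K_f}\gr c'$ is circular as written. Your claim that the boundary mass near a face equals the marginal density at the endpoint is also inaccurate, though it is not needed. Argue instead as follows. The marginal $g_\theta$ satisfies $\|g_\theta\|_\infty\ls 1$ by \eqref{eq:dim-1}. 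Gr\"unbaum's inequality for centered log-concave measures gives $\mu(\{\langle x,\theta\rangle\gr 0\})\gr 1/e$. Hence $1/e\ls \int_0^{h_{K_f}(\theta)} g_\theta \ls h_{K_f}(\theta)$. Alternatively, for $n\gr 10$, use Lemma~\ref{lem:gamma-ball}.
\end{enumerate}
With these two points in place, $\nu_f(S^{n-1})\ls en$.
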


Theorem \ref{th:nu-f} contains and complements the bound for the moment measure $\mu_f$, providing uniform control on both components of the functional surface area.
This shows that, in the isotropic setting, the total mass of these measures is again of order $n$, reinforcing the idea that 
Theorem~\ref{thm:gradient-upper} captures a fundamental dimensional constraint in the functional extension of classical convex geometric notions. Due to a generalized version of the co-area formula (see \eqref{eq:gen-coarea}), Theorem \ref{th:nu-f} equivalently asserts that 
\begin{equation*}
\int_0^\infty \mathcal{H}^{n-1}(\partial \{x: \ f(x)\gr t\}) \, dt \ls Cn, 
\end{equation*}
for every isotropic log-concave function $f$ (see Corollary \ref{cor:H}). This is crucial for the proof of Theorem \ref{thm:weight-per}.

Finally, in \S 6.3, the focus shifts to projections of log-concave functions. For an isotropic log-concave density $f$ on $\mathbb{R}^n$, we consider its projections onto 
hyperplanes and prove that the average $L^1$-norm of these projections is of order $\sqrt{n}$, which is optimal with respect to the dimension. 
This result can be viewed as a functional counterpart of classical estimates for volumes of projections of convex bodies, and is obtained by combining gradient bounds with Cauchy’s surface area formula and a geometric \mbox{interpretation of level sets.}

\medskip

The paper is organized as follows. In Section~\ref{section:2} we collect background material on convex bodies and isotropic log-concave measures. In Section~\ref{section:3} we develop the analytic approach to the dimensional Brunn--Minkowski problem and prove Theorem~\ref{thm:dimBM-main}. In Section \ref{section:4} we prove Theorem \ref{thm:weight-per}. In Section~\ref{section:5} we present a new proof of the gradient estimate of Theorem~\ref{thm:gradient-upper} and related remarks. Finally, in Section~\ref{section:6} we present applications and further geometric consequences of these main bounds.

\medskip

\noindent {\bf Acknowledgements.} We would like to thank Bo'az Klartag, Emanuel Milman, Eli Putterman and Michael Roysdon for helpful pointers to the literature and constructive feedback.

\section{Background information and auxiliary results}\label{section:2}

We work in $\mathbb{R}^n$, equipped with the standard inner product $\langle \cdot, \cdot \rangle$.  
The associated Euclidean norm is denoted by $|\cdot|$, the Euclidean unit ball by $B_2^n$, and the Euclidean unit sphere by $S^{n-1}$.  
Lebesgue measure in $\mathbb{R}^n$ is denoted by $\vol_n$, and we write $\omega_n = \vol_n(B_2^n)$ for the volume of the Euclidean unit ball.
We denote by $\sigma$ the rotationally invariant probability measure on $S^{n-1}$. The Grassmann manifold $G_{n,k}$ of $k$-dimensional subspaces 
of ${\mathbb R}^n$ is equipped with the Haar probability measure $\nu_{n,k}$. 

Throughout the paper, the symbols $C, c, c', c_1, c_2, \ldots$ denote absolute positive constants whose values may change from line to line.  
Whenever we write $a \approx b$, we mean that there exist absolute constants $c_1, c_2 > 0$ such that $c_1 a \ls b \ls c_2 a$. 

\bigskip 

\noindent \textbf{\S 2.1. Convex bodies.} A convex body in $\mathbb{R}^n$ is a compact convex set $K$ with nonempty interior.  
It is called symmetric if $K = -K$, and centered if its barycenter $\operatorname{bar}(K) = \frac{1}{\vol_n(K)} \int_K x\,dx$ is at the origin. 
For every convex body $K$ in $\mathbb R^n$ we denote by $\overline{K}$ the homothetic copy of $K$ scaled to have unit volume,
namely $\overline{K}:=\vol_n(K)^{-1/n}K$.

Let $K$ be a convex body in $\mathbb{R}^n$ with $0\in {\rm int}(K)$. 
The radial function of $K$ is defined by $\varrho_K(x) = \max\{ t > 0 : t x \in K \}$ for all nonzero $x$, and the support function of $K$ is given by 
$h_K(x)=\max\{\langle x,y\rangle :y\in K \}$ for all $x\in\mathbb{R}^n$. The Minkowski functional
of $K$ is defined by $\|x\|_K = \inf\{t>0:\ x\in tK\}$ for all $x\in\mathbb{R}^n$. If $K$ is symmetric then $\|\cdot\|_K$ is a norm on $\mathbb{R}^n$.

The polar body $K^{\circ }$ of a convex body $K$ in ${\mathbb R}^n$ with $0\in {\rm int}(K)$ is the convex body
\begin{equation*}
K^{\circ}:=\bigl\{y\in {\mathbb R}^n: \langle x,y\rangle \ls 1\;\hbox{for all}\; x\in K\bigr\}.
\end{equation*}
The surface area $S(K)$ of $K$ is defined by
$$S(K):=\mathcal{H}^{n-1}(\partial K),$$
where $\mathcal{H}^{n-1}$ denotes the $(n-1)$-dimensional Hausdorff measure on the boundary $\partial K$ of $K$. 

A convex body $K$ in ${\mathbb R}^n$ is called isotropic if it has volume $1$, is centered, and its covariance matrix is a multiple of the identity. 
Equivalently, there exists a constant $L_K>0$, called the isotropic constant of $K$, such that
\begin{equation*}
\|\langle \cdot ,\xi\rangle\|_{L_2(K)}^2:=\int_K\langle x,\xi\rangle^2dx =L_K^2\quad\text{for all}\;\xi\in S^{n-1}.
\end{equation*}
We shall use a number of geometric properties of isotropic convex bodies. For instance, it is known that $L_K\gr L_{B_2^n}\gr c$ for an absolute constant $c>0$. A proof of this assertion may be found e.g. in \cite[Chapter~3]{BGVV-book}.

Bourgain's slicing problem \cite{Bourgain-1986} asks whether there exists an absolute constant $C>0$ such that
\begin{equation}\label{eq:conjecture}
L_n:=\max\{ L_K:K\ \hbox{is an isotropic convex body in}\ \mathbb{R}^n\}\ls C.
\end{equation}
An affirmative solution was recently obtained by Klartag and Lehec~\cite{Klartag-Lehec-2025}, following an important
contribution by Guan~\cite{Guan-preprint} (see also~\cite{Bizeul-2025} for an alternative proof). 
Consequently, $L_K\approx 1$, uniformly in $n$, for every isotropic convex body $K$ in $\mathbb{R}^n$.
For further background, we refer to the survey~\cite{Giannopoulos-Pafis-Tziotziou-2025}.

\bigskip 

\noindent \textbf{\S 2.2. Log-concave functions.} A function $f:\mathbb R^n \rightarrow [0,\infty)$ is called log-concave if its support $K_f=\overline{\{f>0\}}$ 
is a convex set in ${\mathbb R}^n$ and the restriction of $\ln{f}$ to it is concave. 
We denote by ${\rm LC}_{n}$ the class of all upper semi-continuous log-concave functions. Note
that the class of convex bodies in $\mathbb{R}^{n}$ embeds naturally into ${\rm LC}_{n}$ using the map 
$K\mapsto \mathds{1}_{K}$. 

A log-concave function $f\in {\rm LC}_n$ is essentially continuous if the set of its discontinuity points has zero measure 
for the $(n-1)$-dimensional Hausdorff measure. This is equivalent to the assumption that $f\in W^{1,1}(\mathbb{R}^n)$ (see \cite[Remark~3.3]{Rotem-2022}). 
For every $f\in {\rm LC}_n$ with $0<\int f<\infty$ we have
\begin{equation} \label{eq:gen-coarea}
\int_{0}^{\infty} \mathcal H^{n-1}(\partial\{x:f(x)\gr t\})\,dt=\int_{\mathbb{R}^n}|\nabla f(x)|\,dx+\int_{\partial K_f}f(x)\,d\mathcal{H}^{n-1}(x)
\end{equation}
(see \cite[Theorem~3.2]{Rotem-2022}). In particular, $f$ is essentially continuous if and only if we have the co-area formula
\begin{equation}\label{eq:coarea}\int_{\mathbb{R}^n}|\nabla f(x)|\,dx= \int_{0}^{\infty} \mathcal H^{n-1}(\partial\{x:f(x)\gr t\})\,dt.\end{equation}
We emphasize that \cite[Theorem~3.2]{Rotem-2022} in fact contains a misprint, as the term $\mathcal H^{n-1}(\partial\{x:f(x)\gr t\})$ in the co-area formula is replaced by $\mathcal H^{n-1}(\{x:f(x) = t\})$. We confirmed with the author of \cite{Rotem-2022} that this was due to an erroneous reproduction of a formula from \cite[Section~5.5]{Evans}. It does not affect any further results from that paper and shall be used in this form in the sequel.

Let $f=e^{-\psi}\in {\rm LC}_n$. The Legendre transform of $\psi$ is
$$\mathcal{L}\psi(x)= \sup_{y\in\mathbb{R}^n}\{ \langle x,y\rangle - \psi(y)\}.$$
It is always a convex, lower semi-continuous function, and satisfies the involution property 
$\mathcal{L}(\mathcal{L}\psi)=\psi$ if $\psi$ is lower semi-continuous, convex and proper (meaning
that $\mathrm{dom}(\psi) := \{x \in \mathbb{R}^n : \psi(x) < \infty\} \neq \varnothing$). 

Given two log-concave functions $f=e^{-\psi}$ and $g=e^{-\varphi}$ in  ${\rm LC}_{n}$, we define the sup-convolution or 
Asplund product of $f$ and $g$ by
$$(f\star g)(x)= \sup_{y\in\mathbb{R}^n} f(y)\, g(x-y)= \exp\!\big(-(\psi \square \varphi)(x)\big),$$
where the inf-convolution of two convex functions $\psi$ and $\varphi$ is
$$(\psi \square \varphi)(x)=\inf_{y\in\mathbb{R}^n}\big\{\psi(y)+\varphi(x-y)\big\}.$$
For $t>0$ and a log-concave function $f \in {\rm LC}_{n}$, the functional dilation is defined by
$$(t \cdot f)(x) = f\big(x/t\big)^t.$$
This transformation respects log-concavity and is a natural functional counterpart of geometric dilation.

\bigskip 

\noindent \textbf{\S 2.3. Isotropic log-concave probability measures.} We say that a Borel probability measure $\mu$ on $\mathbb R^n$ is log-concave 
if $\mu(H)<1$ for every hyperplane $H$ in ${\mathbb R}^n$ (we then say that $\mu$ is full-dimensional) and $\mu(\lambda A+(1-\lambda)B) \gr \mu(A)^{\lambda}\mu(B)^{1-\lambda}$ for any pair of compact sets $A,B$ in ${\mathbb R}^n$ and any $\lambda \in (0,1)$. Borell \cite{Borell-1974} has proved that, under these assumptions, $\mu $ has a log-concave density $f$. The Brunn-Minkowski inequality implies that if $K$ is a convex body in $\mathbb R^n$ then the indicator function $\mathds{1}_{K} $ of $K$ is the density of a log-concave measure, the Lebesgue measure on $K$.

Let $f:\mathbb{R}^n\to [0,\infty)$ be a log-concave function with finite, positive integral. Its barycenter is defined by
$$\operatorname{bar}(f)= \frac{\int_{\mathbb{R}^n} x\, f(x)\, dx}{\int_{\mathbb{R}^n} f(x)\, dx}.$$
We say that $f$ is centered if $\operatorname{bar}(f)=0$.
We shall use the following result of Fradelizi \cite{Fradelizi-1997}: if $f$ is a centered log-concave density on ${\mathbb R}^n$, then
\begin{equation}\label{eq:frad-2}\|f\|_{\infty }\ls e^nf(0).\end{equation}
The isotropic constant of a log-concave function $f$ with finite positive integral is the affine-invariant quantity
\begin{equation}\label{eq:definition-isotropic}
L_f:= \left( \frac{\|f\|_{\infty}}{\int_{\mathbb{R}^n} f(x)\, dx} \right)^{1/n} \det(\operatorname{Cov}(f))^{1/(2n)},
\end{equation}
where $\operatorname{Cov}(f)$ denotes the covariance matrix of $f$. A log-concave function $f$ is called isotropic if
$$\operatorname{bar}(f)=0,\quad \int_{\mathbb{R}^n}f(x)\,dx=1,\quad \text{and} \quad \operatorname{Cov}(f)=I_n.$$
In this case, $L_f=\|f\|_{\infty}^{1/n}$. A full-dimensional log-concave probability measure $\mu$ on $\mathbb{R}^n$ is called isotropic if its
density $f$ is isotropic. Then, we set $L_{\mu}:=L_f$.

Note that a centered convex body $K$ in $\mathbb{R}^n$ with $\vol_n(K)=1$ is isotropic if and only if the log-concave function $L_K^n\mathds{1}_{K/L_K}$ is isotropic.

Let $\mu$ be a full-dimensional log-concave probability measure on $\mathbb{R}^n$.
For any $1\ls k \ls n-1$ and any $k$-dimensional subspace $F$ of $\mathbb{R}^n$, the marginal of $\mu$ onto $F$ is defined by
$$\pi_F(\mu)(B):=\mu(P_F^{-1}(B)),$$
for every Borel set $B\subseteq F$. The measure $\pi_F(\mu)$ is log-concave and admits a density
$$(\pi_F f)(x)=\int_{F^\perp} f(y+x)\,dy.$$
If $f$ is centered (respectively isotropic), then so is $\pi_F f$
(see~\cite[Proposition~5.1.11]{BGVV-book}). In particular, if $\mu$ is isotropic and $F_{\xi}=\{t\xi:t\in\mathbb{R}\}$ for $\xi\in S^{n-1}$, 
then the one-dimensional marginal $$g_{\xi}(t)=(\pi_{F_{\xi}}f)(t)=\int_{F_{\xi}^{\perp}}f(y+t\xi)\,dy$$
is an isotropic log-concave density on $\mathbb{R}$. Consequently, as shown e.g. in \cite{Fradelizi-1997},
\begin{equation}\label{eq:dim-1}\|g_{\xi}\|_{\infty}=L_{g_{\xi}}\ls 1.\end{equation}
It is known that every centered log-concave density $f$ admits an isotropic position: there exists $T\in GL_n$ such that the push-forward density
$$f_T(x)=\frac{1}{|\det T|}\,f(T^{-1}x)$$
is isotropic (see \cite[Section~2.3]{BGVV-book}).
Moreover, $f_T$ is also log-concave, and $L_{f_T}=L_f$. It is also known (see \cite[Proposition~2.3.12]{BGVV-book}) that $L_f\gr c$ for every
isotropic log-concave function $f$ on $\mathbb{R}^n$, where $c>0$ is an absolute constant. On the other hand, Ball~\cite{Ball-1988} proved that for every $n$,
$$\tilde{L}_n:=\sup\big\{L_f: f\;\text{is a log-concave density on $\mathbb{R}^n$}\big\}\ls C_1L_n,$$
and hence $\tilde{L}_n\ls C_2$ by the affirmative solution of Bourgain's slicing problem. 

We refer to~\cite{AGA-book,AGA-book-2} for asymptotic convex geometry, and to~\cite{BGVV-book} for background 
on isotropic convex bodies and log-concave measures.

\bigskip 

\noindent \textbf{\S 2.4. Level sets of the density.} Let $\mu$ be a centered log-concave probability measure on $\mathbb{R}^n$ with density 
$f=\exp(-\psi)$, where $\psi$ is a convex function. For every $t\gr 0$ we consider the convex set
$$R_t(\mu )=\{x\in {\mathbb R}^n:f(x)\gr e^{-t}f(0)\}.$$
Using the log-concavity of $f$ we easily check that $R_t(\mu )$ is convex. Note also that $0\in {\rm int}(R_t(\mu ))$ for every $t>0$.
To show that $R_t(\mu)$ is bounded, we recall that since $f$ is log-concave and has finite positive integral we
have that there exist constants $A,B>0$ such that
\begin{equation}\label{eq:A-B}f(x)\ls Ae^{-B|x|}\end{equation}
for all $x\in {\mathbb R}^n$ (see \cite[Lemma~2.2.1]{BGVV-book}). Therefore, if $x\in R_t(\mu)$ we get
that $|x|\ls\frac{1}{B}\big(\ln (A/f(0))+t\big)$. Another consequence of \eqref{eq:A-B} is that $f$ has
finite moments of all orders.


The next proposition, which is essentially due to Klartag (see \cite[Lemma~5.2]{Klartag-2007} and \cite{Giannopoulos-Tziotziou-2025}
for the precise form below) shows that the measure of $R_t(\mu)$ increases to $1$
exponentially fast as $t\to\infty $.

\begin{proposition}\label{prop:r-2}For every $t\gr 3n$ we have that $\mu(R_t(\mu ))\gr 1-e^{-t/4}$.
\end{proposition}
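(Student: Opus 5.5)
The plan is to control the measure of the complement $\{x: \psi(x) > \psi(0)+t\}$ by comparing $\mu$ with a radial ``one-dimensional'' picture along rays from the origin, and then to use Fradelizi's bound \eqref{eq:frad-2} to handle the normalization. Write $f(0)=e^{-\psi(0)}$. By \eqref{eq:frad-2}, $\|f\|_\infty\ls e^n f(0)$, so $\psi\gr \psi(0)-n$ everywhere; in particular the potential never drops more than $n$ below its value at the origin.

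First, integrate in polar coordinates. For $\theta\in S^{n-1}$ the function $r\mapsto \psi(r\theta)-\psi(0)$ is convex on $[0,\infty)$, vanishes at $0$, and is bounded below by $-n$. Let $r_t(\theta)=\varrho_{R_t(\mu)}(\theta)$ be the radius at which it equals $t$. Convexity gives, for $r\gr r_t(\theta)$,
$$\psi(r\theta)-\psi(0)\gr t\,\frac{r}{r_t(\theta)}.$$
This is the slope of the chord from $0$ to $r_t(\theta)$, since the value at $0$ is $0$. On the other hand, for $r\ls r_s(\theta)$ we have $f(r\theta)\gr e^{-s}f(0)$.

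Now compare the tail with the bulk along each ray. Set
$$T(\theta)=\int_{r_t(\theta)}^\infty r^{n-1}f(r\theta)\,dr\ls f(0)\int_{r_t}^\infty r^{n-1}e^{-tr/r_t}\,dr = f(0)\,r_t^n\, t^{-n}\,\Gamma(n,t).$$
Here $\Gamma(n,t)$ is the incomplete gamma function. The total mass on the ray is at least $f(0)e^{-t}r_t^n/n$, because $f\gr e^{-t}f(0)$ on $[0,r_t]$. This lower bound is too weak, so instead I would compare with the mass up to a smaller level. Convexity also gives $r_s(\theta)\gr (s/t) r_t(\theta)$ for $s\ls t$. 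Taking $s$ of order $n$, the mass on $[0,r_s]$ is at least $f(0)e^{-s}(s/t)^n r_t^n/n$.

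The ratio of tail to bulk on each ray is then at most
$$\frac{n\,e^{s}\,\Gamma(n,t)}{s^n}.$$
For $t\gr 3n$ we have $\Gamma(n,t)\ls C t^{n-1}e^{-t}\cdot(\text{const})$ as long as $t\gr 2n$. Choosing $s=n$ turns the ratio into roughly $n\,e^{n}(t/n)^{n}e^{-t}/t$. One then checks that this is at most $e^{-t/4}$ when $t\gr 3n$: the function $n\ln(t/n)+n-t$ is at most $-t/4$-ish in that range. Summing over $\theta$ gives $\mu(\R^n\setminus R_t)\ls e^{-t/4}\mu(\R^n)$.

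The main obstacle is the constant bookkeeping in this last step, which must land exactly on the threshold $3n$ and exponent $t/4$. The ratio $e^{n}(t/n)^n e^{-t}$ at $t=3n$ equals $(3e^{-2})^n\approx e^{-0.9n}$, against the target $e^{-3n/4}$. This is marginal, and the polynomial factors are not absorbed unless the choice of $s$ is tuned, for example by optimizing $s$ rather than fixing $s=n$. If this fails, I would instead use the log-concavity of the one-dimensional radial densities $r^{n-1}f(r\theta)$ and a Borell-type lemma. I do not expect centering to be needed beyond ensuring $0\in{\rm int}$ of the support.
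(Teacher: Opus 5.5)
Your argument is correct, and it takes a genuinely different route from the paper. The paper uses no ray decomposition. Writing $e^{-\varphi}=f/f(0)$, convexity and $\varphi(0)=0$ give $\varphi(x/2)\ls\varphi(x)/2$. A change of variables then yields $\int e^{\varphi/2}\,d\mu\ls 2^n$, and Markov's inequality gives $\mu(\R^n\setminus R_t(\mu))\ls 2^ne^{-t/2}\ls e^{-t/4}$ once $t\gr(4\ln 2)n$.

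You instead compare, ray by ray, the tail beyond $\varrho_{R_t(\mu)}(\theta)$ with the mass inside $R_s(\mu)$, using monotonicity of chord slopes. This gives $\mu(\R^n\setminus R_t(\mu))\ls ne^{s}s^{-n}\Gamma(n,t)$. It costs incomplete-gamma bookkeeping, but with $s=n$ it is nearly sharp. For $f\propto e^{-\|x\|_K}$ the exact value is $\Gamma(n,t)/(n-1)!$, so you lose only the factor $e^nn!/n^n\approx\sqrt{2\pi n}$, whereas $2^ne^{-t/2}$ is exponentially weaker for $t\gr 3n$.

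Both proofs use only convexity, $\varphi(0)=0$ and $\int f=1$. Yours also needs $\varrho_{R_t(\mu)}(\theta)<\infty$, which is the boundedness of $R_t(\mu)$ noted in \S 2.4. Your appeal to Fradelizi's bound $\psi\gr\psi(0)-n$ is never used.

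The step you call marginal in fact closes with room. Retuning $s$ could not help anyway, since $s=n$ already minimizes $e^{s}s^{-n}$. For $u\gr t\gr 3n$, the logarithmic derivative of $u^{n-1}e^{-u}$ is at most $\frac{n-1}{t}-1\ls-\frac23$, so $\Gamma(n,t)\ls\frac32 t^{n-1}e^{-t}$. With $x=t/n\gr 3$,
\[
\frac{ne^n\Gamma(n,t)}{n^n}\ls\frac{3}{2x}\,e^{n(1+\ln x-x)}=\frac{3}{2x}\,e^{-t/4}\,e^{n(1+\ln x-\frac34 x)}\ls\frac12\,e^{-t/4}.
\]
The last inequality holds because $1+\ln x-\frac34x$ is decreasing on $[3,\infty)$ and equals $1+\ln 3-\frac94<0$ at $x=3$. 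The polynomial prefactor $\frac{3}{2x}\ls\frac12$ works in your favour rather than against you.
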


\begin{proof}Consider the convex function $\varphi$ defined by $e^{-\varphi(x)}=f(x)/f(0)$. Note that $\varphi(0)=0$.
Then
\begin{align*}
\int_{\mathbb{R}^n}e^{\varphi(x)/2}d\mu(x) &=f(0)\int_{\mathbb{R}^n}e^{-\varphi(x)/2}dx=f(0)\int_{\mathbb{R}^n}e^{-(\varphi(x)+\varphi(0))/2}dx\\
&\ls f(0)\int_{\mathbb{R}^n}e^{-\varphi(x/2)}dx=2^nf(0)\int_{\mathbb{R}^n}e^{-\varphi(x)}dx=2^n.
\end{align*}
For any $t>0$ we have $R_t(\mu)=\{x:\varphi(x)\ls t\}$. From Markov's inequality we get
$$1-\mu(R_t(\mu))=\mu(\{x:\varphi(x)>t\})\ls e^{-t/2}\int_{\mathbb{R}^n}e^{\varphi(x)/2}d\mu(x)\ls 2^ne^{-t/2}.$$
If $t\gr (4\ln 2)n$ then $2^ne^{-t/2}\ls e^{-t/4}$, and this implies that
$$\mu(R_t(\mu))\gr 1-e^{-t/4},\qquad\text{for all}\;t\gr (4\ln 2)n.$$
The result follows. \end{proof}

We shall also use the fact that if $\mu$ is isotropic and $t$ is large enough then $R_t(\mu)$ contains a constant multiple of the Euclidean unit ball.
The proof of the next lemma is essentially contained in \cite[Lemma~5.4]{Klartag-2007}. 

\begin{lemma}\label{lem:gamma-ball}Let $\mu$ be an isotropic log-concave probability measure on ${\mathbb R}^n$.
For any $n\gr 10$ and any $t\gr 3n$ we have that
$$R_t(\mu)\supseteq  \tfrac{1}{3}B_2^n.$$
\end{lemma}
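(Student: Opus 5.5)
We argue by contradiction, using the convexity of $R_t(\mu)$ together with the fact that the measure of this set is close to $1$ (Proposition~\ref{prop:r-2}). Suppose some point $x_0$ with $|x_0|<1/3$ does not lie in $R_t(\mu)$, where $t\gr 3n$. Since $R_t(\mu)$ is convex and contains $0$ in its interior, we can separate $x_0$ from $R_t(\mu)$ by a hyperplane. This gives a direction $\theta\in S^{n-1}$ and a level $s\ls |x_0|<1/3$ with
$$R_t(\mu)\subseteq\{x:\langle x,\theta\rangle\ls s\}.$$
Consequently the half-space $H^+=\{x:\langle x,\theta\rangle> s\}$ satisfies $\mu(H^+)\ls 1-\mu(R_t(\mu))\ls e^{-t/4}\ls e^{-3n/4}$.

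Next we bound $\mu(H^+)$ from below using only the one-dimensional marginal $g_\theta$. This is an isotropic log-concave density on $\R$, so $\|g_\theta\|_\infty\ls 1$ by \eqref{eq:dim-1}. We also use the standard one-dimensional fact that for a centered log-concave density the mass of $\{u\gr 0\}$ is at least $1/e$ (Grünbaum's inequality for log-concave densities). Then
$$\mu(H^+)=\int_s^\infty g_\theta(u)\,du\gr \int_0^\infty g_\theta - \int_0^{s} g_\theta\gr \frac1e - s\|g_\theta\|_\infty\gr \frac1e-\frac13>0.$$
If $s<0$ the bound is even better, since then $\mu(H^+)\gr 1/e$. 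So $\mu(H^+)\gr 1/e-1/3\approx 0.034$.

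Combining the two bounds gives $0.034\ls e^{-3n/4}$, which is false once $n\gr 10$, and indeed for much smaller $n$. This contradiction shows that the open ball of radius $1/3$ is contained in $R_t(\mu)$. To pass to the closed ball $\tfrac13 B_2^n$, note that $R_t(\mu)$ is closed: it is a sublevel set of the convex function $\psi$ restricted to a region where $\psi$ is finite, or, more simply, the argument runs with any radius $r<1/3$ and a slightly smaller margin and still gives a contradiction. So taking closures yields $R_t(\mu)\supseteq \tfrac13B_2^n$.

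The only step needing care is this last closedness/boundary issue. If $f$ is merely upper semicontinuous, the superlevel set $\{f\gr e^{-t}f(0)\}$ is closed, which settles it. Otherwise I would run the argument with radius $1/3+\delta$ for a small $\delta>0$; the numerical margin $1/e-1/3-\delta>e^{-3n/4}$ leaves room for this. The remaining ingredients, namely the separation step, the bound $\|g_\theta\|_\infty\ls1$, and Grünbaum's inequality, are all standard.
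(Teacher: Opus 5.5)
Your proof is correct and is essentially the standard argument the paper relies on (Klartag's Lemma~5.4, in the form cited from Giannopoulos--Tziotziou and BGHT). A half-space through $x_0$ that misses $R_t(\mu)$ has measure at most $e^{-3n/4}$ by Proposition~\ref{prop:r-2}, and at least $1/e-|x_0|$ by Gr\"unbaum's inequality together with $\|g_\theta\|_\infty\ls 1$.

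Your closing discussion of closedness is unnecessary. Separating a point from a convex set in $\R^n$ by a hyperplane does not require the set to be closed. Moreover, the contradiction $1/e-1/3>e^{-3n/4}$ is strict, so you can assume $|x_0|\ls 1/3$ from the start and get the closed ball directly.
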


A proof of Lemma~\ref{lem:gamma-ball} in the exact form stated above can be found in \cite[Lemma~3.4]{Giannopoulos-Tziotziou-2025} and \cite[Lemma~3.2]{BGHT}.

\bigskip 

\noindent \textbf{\S 2.5. Stability of the Poincar\'{e} constant.} Let $\mu $ be a Borel probability measure on ${\mathbb R}^n$. 
Recall that the Cheeger constant $\chi_{\mu }$ of $\mu $ is the largest constant $c\gr 0$ for which we have
\begin{equation}\label{eq:cheeger-1}\mu^+(\partial E)\gr c\,\min\{\mu (E),1-\mu (E)\}\end{equation}
for every Borel subset $E$ of ${\mathbb R}^n$. The reciprocal Cheeger constant of $\mu$ is $\psi_{\mu}:=1/\chi_{\mu}$. 

We also say that $\mu $ satisfies the Poincar\'{e} inequality with constant $\vartheta >0$ if
\begin{equation}\label{eq:poincare-1}{\rm Var}_\mu(f)\ls \vartheta^2\int |\nabla f|^2\, d\mu,
\end{equation}for all smooth functions $f$ on ${\mathbb R}^n$, where ${\rm Var}_\mu(g)={\mathbb E}_\mu(g^2)-({\mathbb E}_\mu(g))^2$ is the
variance of $g$ with respect to $\mu$. The Poincar\'{e} constant $\vartheta_{\mu }$ of $\mu $ is the smallest constant 
$\vartheta>0$ for which \eqref{eq:poincare-1} is satisfied.

It is known (see~\cite[Theorem~2.3.1]{AGA-book-2} that if $\alpha(\mu)$ is the smallest constant $\alpha>0$ with the property
that every integrable, locally Lipschitz function $f:{\mathbb R}^n\to {\mathbb R}$ satisfies
\begin{equation}\label{eq:cheeger-poincare-1-1}\int_{{\mathbb R}^n} |f(x)-{\mathbb E}_{\mu }(f)|\,d\mu (x)\ls
\alpha\int_{{\mathbb R}^n} |\nabla f (x)|\,d\mu (x),\end{equation}
then, $\psi_{\mu}/2\ls \alpha(\mu)\ls 2\psi_{\mu}$. It is also known (see~\cite{Cattiaux-Guillin-2020}) that
$$\frac{1}{2}\int_{\mathbb{R}^n}|f(x)-\mathbb{E}_{\mu}(f)|\,d\mu(x)\ls \int_{\mathbb{R}^n}|f(x)-m_{\mu}(f)|\,d\mu(x)\ls 
\int_{\mathbb{R}^n}|f(x)-\mathbb{E}_{\mu}(f)|\,d\mu(x)$$
for all $f$, where $m_{\mu}(f)$ is the median of $f$ with respect to $\mu$. It follows that if
$\beta(\mu)$ is the smallest constant $\beta>0$ with the property
that for every integrable, locally Lipschitz function $f:{\mathbb R}^n\to {\mathbb R}$,
\begin{equation}\label{eq:cheeger-poincare-1-2}\int_{{\mathbb R}^n} |f(x)-m_{\mu }(f)|\,d\mu (x)\ls
\beta\int_{{\mathbb R}^n} |\nabla f (x)|\,d\mu (x),\end{equation}
then, $\alpha(\mu)/2\ls \beta(\mu)\ls \alpha(\mu)$. Combining the above, we see that
\begin{equation}\label{eq:cheeger-poincare-1-3}\psi_{\mu}/4\ls \beta(\mu)\ls 2\psi_{\mu}.\end{equation}
A theorem of Maz'ya \cite{Mazya-1960}, \cite{Mazya-1962} and Cheeger \cite{Cheeger-1970}
shows that the Poincar\'{e} constant is bounded by the reciprocal Cheeger constant: If $\mu$ is a Borel probability measure with 
reciprocal Cheeger constant $\psi_\mu$ then its Poincar\'e constant $\vartheta_{\mu}$ satisfies
\begin{equation}\label{eq:cheeger-poincare-1}\vartheta_{\mu}\ls 2\psi_{\mu }.\end{equation}
On the other hand, the assumption that $\mu$ is log-concave implies a reverse inequality with a constant that
does not depend on the dimension. Buser \cite{Buser-1982} (see also Ledoux \cite{Ledoux-1994})
has shown that if $\mu$ is a log-concave probability measure on $\mathbb{R}^n$, then 
\begin{equation}\label{eq:cheeger-poincare-2}\psi_{\mu }\ls c\,\vartheta_{\mu},\end{equation}
where $c>0$ is an absolute constant.

E.~Milman \cite[Theorem~5.5]{EMilman-2009} has shown that the ratio of the Cheeger constants of two log-concave probability measures $\mu$ and $\nu$
on $\mathbb{R}^n$ with densities $f$ and $g$ is controlled by their total variation distance
$$d_{{\rm TV}}(\mu,\nu)=\frac{1}{2}\int_{\mathbb{R}^n}|f(x)-g(x)|\,dx.$$
More precisely, if $d_{{\rm TV}}(\mu,\nu)= 1-\varepsilon$ for some $\varepsilon\in (0,1)$ then
\begin{equation}\label{eq:poincare-stability}\chi_{\mu}\ls \frac{c_1}{\varepsilon^2}\max\{1,\ln(1/\varepsilon)\}\,\chi_{\nu},\end{equation}
where $c_1>0$ is an absolute constant. An alternative proof of \eqref{eq:poincare-stability} is given by Cattiaux and Guillin in
\cite[Theorem~9.3.10]{Cattiaux-Guillin-2020}. When $\nu$ is itself a restriction $\mu_A$ of $\mu$ on a Borel set $A$, then we have
\begin{align*}
d_{{\rm TV}}(\mu,\mu_A) &=\frac{1}{2}\left(\int_A\left|f(x)-\frac{1}{\mu(A)}f(x)\right|\,dx+\int_{A^c}f(x)\,dx\right)\\
&=\frac{1}{2}\left(\left(\frac{1}{\mu(A)}-1\right)\mu(A)+\mu(A^c)\right)=1-\mu(A).
\end{align*}
In this case, the dependence on $\e=\mu(A)$ in \eqref{eq:poincare-stability} can be improved to logarithmic as shown in \cite[Lemmas~5.1 and 5.4]{EMilman-2009}, though this improved dependence will be immaterial for us.

\begin{proposition} [E.~Milman]\label{prop:theta-stability}Let $\mu $ be a log-concave probability measure on $\mathbb{R}^n$. For every convex
set $A\subseteq\mathbb{R}^n$ with $\mu(A)>0$ we have that
\begin{equation}\label{eq:theta-stability-1}\vartheta_{\mu_A}\ls C \max\left\{1,\ln\left(\frac{1}{\mu(A)}\right)\right\}\,\vartheta_{\mu},\end{equation}
where $\mu_A$ is the restriction of $\mu$ onto $A$ and $C>0$ is an absolute constant. 
\end{proposition}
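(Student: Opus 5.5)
The plan is to deduce the statement from the total-variation stability estimate \eqref{eq:poincare-stability}, applied with $\nu=\mu_A$, and then translate between Cheeger and Poincar\'e constants using \eqref{eq:cheeger-poincare-1} and \eqref{eq:cheeger-poincare-2}. First I will check that $\mu_A$ is a log-concave probability measure. Its density is $f\mathds{1}_A/\mu(A)$, and the product of a log-concave function with the indicator of a convex set is log-concave. If $A$ has empty interior, then $\mu(A)=0$ because $\mu$ is absolutely continuous; so $\mu(A)>0$ forces $A$ to be full-dimensional, and $\mu_A$ is full-dimensional as well. This is the only place where convexity of $A$ is used, but it is essential, because Buser's inequality requires log-concavity of $\mu_A$.

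Next, the computation preceding the proposition gives $d_{\rm TV}(\mu,\mu_A)=1-\mu(A)$, so \eqref{eq:poincare-stability} applies with $\varepsilon=\mu(A)$, except in the degenerate case $\mu(A)=1$. In that case $\mu_A=\mu$ and there is nothing to prove. The roles in \eqref{eq:poincare-stability} are symmetric, since the total variation distance is symmetric and both measures are log-concave. We may therefore apply it with $\mu_A$ in place of $\mu$ and $\mu$ in place of $\nu$, obtaining
$$\chi_{\mu}\ls \frac{c_1}{\varepsilon^2}\max\{1,\ln(1/\varepsilon)\}\chi_{\mu_A},\qquad\text{that is,}\qquad \psi_{\mu_A}\ls \frac{c_1}{\varepsilon^2}\max\{1,\ln(1/\varepsilon)\}\psi_{\mu}.$$

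Finally I chain the inequalities:
$$\vartheta_{\mu_A}\ls 2\psi_{\mu_A}\ls \frac{2c_1}{\varepsilon^{2}}\max\{1,\ln(1/\varepsilon)\}\psi_{\mu}\ls \frac{2c\,c_1}{\varepsilon^{2}}\max\{1,\ln(1/\varepsilon)\}\vartheta_{\mu}.$$
The first step is Maz'ya--Cheeger \eqref{eq:cheeger-poincare-1} applied to $\mu_A$, and the last step is Buser \eqref{eq:cheeger-poincare-2} applied to the log-concave measure $\mu$.

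This yields a bound of the form $\varepsilon^{-2}\log(1/\varepsilon)$, which is weaker than the claimed $\max\{1,\ln(1/\mu(A))\}$. The main obstacle is therefore obtaining purely logarithmic dependence, and for this I would invoke the refined restriction estimates \cite[Lemmas~5.1 and 5.4]{EMilman-2009} referred to above. Those results give $\psi_{\mu_A}\ls C\max\{1,\ln(1/\mu(A))\}\psi_\mu$ when $A$ is convex. Plugging this into the same chain through \eqref{eq:cheeger-poincare-1} and \eqref{eq:cheeger-poincare-2} gives the stated inequality with an absolute constant $C$. If this strengthening is not needed downstream, the polynomial version obtained above already suffices, for instance when $\mu(A)$ is bounded below by an absolute constant.
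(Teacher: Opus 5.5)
Your proposal is correct and follows essentially the same route as the paper, which does not give a separate proof but attributes the statement to E.~Milman. The paper's justification combines the computation $d_{\rm TV}(\mu,\mu_A)=1-\mu(A)$, the stability estimate \eqref{eq:poincare-stability} with its logarithmic refinement for restrictions to convex sets \cite[Lemmas~5.1 and 5.4]{EMilman-2009}, and the Maz'ya--Cheeger and Buser inequalities, exactly as you do; there are two harmless slips, namely that applying \eqref{eq:poincare-stability} directly with $\nu=\mu_A$ already gives the needed $\chi_\mu\ls C(\varepsilon)\chi_{\mu_A}$ (your ``swap'' literally yields the reverse inequality, though the symmetry of the estimate saves it), and that log-concavity of $\mu_A$ is needed for Milman's stability results rather than for Buser's inequality, which in your chain is applied only to $\mu$.
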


The Kannan--Lov\'{a}sz--Simonovits conjecture asks if there exists an absolute constant $C>0$ such that
\begin{equation*}\psi_n:=\sup\{ \psi_{\mu }:\mu\ \hbox{is an isotropic log-concave measure on}\ {\mathbb
R}^n\}\ls C.\end{equation*}
In view of the discussion above, an equivalent way to formulate the KLS conjecture is to ask that the Poincar\'{e} inequality
holds for every isotropic log-concave probability measure $\mu$ on ${\mathbb R}^n$ with a constant
that does not depend on the measure or the dimension $n$. The best known result on this problem is due to Klartag \cite{Klartag-2023} and 
provides an (almost) affirmative answer. For any $n\gr 2$ we have that
$$\psi_n\ls C\sqrt{\ln n},$$
where $C>0$ is an absolute constant.

Combining Klartag's estimate with Proposition~\ref{prop:theta-stability} we immediately obtain the next fact that will be useful in the proof of the dimensional Brunn--Minkowski inequality.

\begin{proposition}\label{prop:theta-stability-isotropic}Let $\mu $ be an isotropic log-concave probability measure on $\mathbb{R}^n$. For every $\alpha\in (0,1)$
and every convex set $A\subseteq\mathbb{R}^n$ with $\mu(A)\gr \alpha$ we have that
$$\vartheta_{\mu_A}\ls C(\alpha)\sqrt{\ln n},$$
where $\mu_A$ is the restriction of $\mu$ onto $A$ and $C(\alpha)>0$ is a constant that depends only on $\alpha$. 
\end{proposition}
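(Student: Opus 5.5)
The plan is to combine Proposition~\ref{prop:theta-stability} with Klartag's bound $\psi_n\ls C\sqrt{\ln n}$, after translating Klartag's estimate into a Poincar\'e bound for $\mu$.

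\textbf{Step 1: Poincar\'e constant of $\mu$.} Since $\mu$ is isotropic and log-concave, its reciprocal Cheeger constant satisfies $\psi_\mu\ls\psi_n\ls C\sqrt{\ln n}$ by Klartag's theorem. The Maz'ya--Cheeger inequality \eqref{eq:cheeger-poincare-1} then gives
$$\vartheta_\mu\ls 2\psi_\mu\ls 2C\sqrt{\ln n}.$$

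\textbf{Step 2: Passing to the restriction.} Let $A$ be convex with $\mu(A)\gr\alpha>0$. Proposition~\ref{prop:theta-stability} applies directly and yields
$$\vartheta_{\mu_A}\ls C\max\{1,\ln(1/\mu(A))\}\,\vartheta_\mu\ls C\max\{1,\ln(1/\alpha)\}\,\vartheta_\mu,$$
using that $\ln(1/\mu(A))\ls\ln(1/\alpha)$. Note that $\mu_A$ is again log-concave, since it is the restriction of a log-concave measure to a convex set; this is implicit in the hypotheses of Proposition~\ref{prop:theta-stability}.

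\textbf{Step 3: Conclusion.} Combining the two steps gives the statement with
$$C(\alpha)=2C^2\max\{1,\ln(1/\alpha)\}.$$
This constant depends only on $\alpha$.

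There is no real obstacle here. The only points to watch are that the normalization of the restricted measure plays no role, since the Poincar\'e constant is scale-free in the measure, and that the convexity of $A$ is exactly what Proposition~\ref{prop:theta-stability} requires. If one wanted to avoid relying on the logarithmic dependence in that proposition, one could instead use \eqref{eq:poincare-stability} with $d_{\rm TV}(\mu,\mu_A)=1-\mu(A)$ together with Buser's inequality \eqref{eq:cheeger-poincare-2}. That route gives a polynomial dependence on $1/\alpha$, which is still only a function of $\alpha$.
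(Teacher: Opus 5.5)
Your argument is correct and is the paper's argument: you use Klartag's bound $\psi_\mu\ls C\sqrt{\ln n}$, convert it into a Poincar\'e bound via the Maz'ya--Cheeger inequality $\vartheta_\mu\ls 2\psi_\mu$, and feed this into E.~Milman's stability result, Proposition~\ref{prop:theta-stability}, using $\ln(1/\mu(A))\ls\ln(1/\alpha)$. As in the paper, this implicitly requires $n\gr 2$, since Klartag's bound is stated for $n\gr 2$ and the right-hand side vanishes when $n=1$.
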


It should be noted that a reverse inequality to \eqref{eq:theta-stability-1} holds for Borel sets $A$ that have large measure without further convexity assumptions. It is proved in \cite[Proposition~9.2.5]{Cattiaux-Guillin-2020} that if $\mu$ is a log-concave probability measure such that
$$\int_{\mathbb{R}^n} |f(x)-\mathbb{E}_{\mu}(f)|\,d\mu(x)\ls c(t)\int_{\mathbb{R}^n} |\nabla f(x)|\,d\mu(x)+ t\,\mathrm{osc}(f)$$
for some $0<t<1/2$, some constant $c(t)>0$, and all Lipschitz functions $f$, where $\mathrm{osc}(f)=\sup f-\inf f$
denotes the oscillation of $f$, then
$$\beta(\mu)\ls \frac{c(t)}{1-2t}.$$
Let $A$ be a Borel set in $\mathbb{R}^n$ and let $\mu_A$ denote the restriction of $\mu$ onto $A$ as above. Then it is clear that
$$\int_{\mathbb{R}^n} |f(x)-\mathbb{E}_\mu(f)|\,d\mu(x)\ls\int_A |f(x)-\mathbb{E}_\mu(f)|\,d\mu(x)+(1-\mu(A))\,\mathrm{osc}(f),$$
therefore
$$\int_{\mathbb{R}^n} |f(x)-\E_{\mu}(f)|\,d\mu(x)\ls\mu(A)\beta(\mu_A)\int_{\mathbb{R}^n} |\nabla f(x)|\,d\mu_A(x)+(1-\mu(A))\,\mathrm{osc}(f).$$
This implies the following result.

\begin{proposition}\label{prop:theta-stability-reverse}Let $\mu $ be a log-concave probability measure on $\mathbb{R}^n$. For every Borel 
set $A\subseteq\mathbb{R}^n$ with $\mu(A)>\tfrac{1}{2}$ we have that
$$\beta(\mu)\ls \frac{\mu(A)}{2\mu(A)-1}\beta(\mu_A),$$
and hence
\begin{equation}\label{eq:theta-stability-reverse}\psi_{\mu}\ls  \frac{8\mu(A)}{2\mu(A)-1}\,\psi_{\mu_A},\end{equation}
where $\mu_A$ is the restriction of $\mu$ onto $A$. 
\end{proposition}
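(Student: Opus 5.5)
The plan is to follow the route sketched just before the statement: verify the hypothesis of the Cattiaux--Guillin criterion \cite[Proposition~9.2.5]{Cattiaux-Guillin-2020} with $t=1-\mu(A)$ and $c(t)=\mu(A)\beta(\mu_A)$, and then pass from $\beta$ to the reciprocal Cheeger constant using \eqref{eq:cheeger-poincare-1-3}. The assumption $\mu(A)>\tfrac12$ is exactly what gives $t=1-\mu(A)<\tfrac12$, as the criterion requires. It also makes the denominator positive, since $1-2t=2\mu(A)-1$.

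The key step is a deficient $L^1$-Poincar\'e inequality for $\mu$, obtained from the one for $\mu_A$. Fix an integrable, locally Lipschitz $f$, which by truncation may be taken bounded so that $\mathrm{osc}(f)<\infty$. Let $m=m_{\mu_A}(f)$ be a median of $f$ with respect to $\mu_A$. Split the integral over $A$ and $A^c$, and bound $|f-m|\ls \mathrm{osc}(f)$ on $A^c$ (note that $m$ lies between $\inf f$ and $\sup f$). This gives
$$\int_{\mathbb{R}^n}|f-m|\,d\mu\ls \mu(A)\int|f-m|\,d\mu_A+(1-\mu(A))\,\mathrm{osc}(f)\ls \mu(A)\beta(\mu_A)\int|\nabla f|\,d\mu_A+(1-\mu(A))\,\mathrm{osc}(f),$$
where the second inequality is the definition \eqref{eq:cheeger-poincare-1-2} applied to $\mu_A$. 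Finally, $\int|\nabla f|\,d\mu_A=\mu(A)^{-1}\int_A|\nabla f|\,d\mu$ is controlled by $\int|\nabla f|\,d\mu$, which puts the inequality in the form needed for the criterion.

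The step I expect to be the main obstacle is matching the centering constant in the criterion. The statement quoted above uses $\mathbb{E}_\mu(f)$, whereas the natural constant here is the $\mu_A$-median $m$, and swapping one for the other costs a factor $2$, which is not affordable. To handle this, I would check that the proof of \cite[Proposition~9.2.5]{Cattiaux-Guillin-2020} goes through with an arbitrary constant $a$ in place of $\mathbb{E}_\mu(f)$. That argument works with level sets of $f-a$ and the oscillation term, and should be insensitive to which constant is used. I would also keep careful track of the normalisation relating $\int\cdot\,d\mu_A$ and $\int_A\cdot\,d\mu$, so that the resulting constant is $c(t)=\mu(A)\beta(\mu_A)$, up to this normalisation. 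Granting this, the criterion yields
$$\beta(\mu)\ls\frac{c(t)}{1-2t}=\frac{\mu(A)}{2\mu(A)-1}\,\beta(\mu_A).$$

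To finish, apply \eqref{eq:cheeger-poincare-1-3} twice. The left inequality applied to $\mu$ gives $\psi_\mu\ls 4\beta(\mu)$, and the right inequality applied to $\mu_A$ gives $\beta(\mu_A)\ls 2\psi_{\mu_A}$. Chaining these with the previous bound gives
$$\psi_\mu\ls 4\beta(\mu)\ls \frac{4\mu(A)}{2\mu(A)-1}\,\beta(\mu_A)\ls\frac{8\mu(A)}{2\mu(A)-1}\,\psi_{\mu_A},$$
which is \eqref{eq:theta-stability-reverse}.
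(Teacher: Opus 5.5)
You follow the paper's route: a deficient $L^1$-Poincar\'e inequality for $\mu$ with $t=1-\mu(A)$, fed into the Cattiaux--Guillin criterion. You are right not to centre at $\E_\mu(f)$ before invoking $\beta(\mu_A)$, since the paper's display doing so fails for $f$ constant on $A$ but not on $A^c$. Centring at $m=m_{\mu_A}(f)$ and using the criterion with median centring is sound. Test on approximate indicators of sets of $\mu$-measure $\tfrac12$, where mean and median centring agree, and use concavity of the isoperimetric profile; this still gives $\psi_\mu\ls c/(1-2t)$.

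The genuine gap is the step you defer as bookkeeping. Your own chain gives
\[
\mu(A)\beta(\mu_A)\int|\nabla f|\,d\mu_A=\beta(\mu_A)\int_A|\nabla f|\,d\mu\ls\beta(\mu_A)\int|\nabla f|\,d\mu .
\]
So $c(t)=\beta(\mu_A)$, not $\mu(A)\beta(\mu_A)$, and the criterion yields only $\beta(\mu)\ls\beta(\mu_A)/(2\mu(A)-1)$.

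No normalisation can recover the factor $\mu(A)$, because the first inequality of the statement is false. On $\R$ take $d\mu=e^{x}\mathds{1}_{(-\infty,0]}(x)\,dx$ and $A=[-\ln 10,0]$, so $\mu(A)=\tfrac{9}{10}$. Lipschitz approximations of $\mathds{1}_{(-\infty,x_0]}$ with $e^{x_0}<\tfrac12$ have $\mu$-median $0$ and show $\beta(\mu)\gr 1$. For $\mu_A$, with density $g$ and distribution function $G$, pivoting at the median point gives $\beta(\mu_A)\ls\sup_s\min\{G(s),1-G(s)\}/g(s)=\tfrac{9}{11}$. Hence $\frac{\mu(A)}{2\mu(A)-1}\beta(\mu_A)\ls\frac98\cdot\frac{9}{11}=\frac{81}{88}<\beta(\mu)$. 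The paper's sketch contains the same slip: it implicitly replaces $\int|\nabla f|\,d\mu_A$ by $\int|\nabla f|\,d\mu$.

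The final inequality \eqref{eq:theta-stability-reverse} is still true, but your chain does not reach it with the constants of \eqref{eq:cheeger-poincare-1-3}. From $\beta(\mu)\ls\beta(\mu_A)/(2\mu(A)-1)$, $\psi_\mu\ls 4\beta(\mu)$ and $\beta(\mu_A)\ls 2\psi_{\mu_A}$ you only get $\psi_\mu\ls\frac{8}{2\mu(A)-1}\psi_{\mu_A}$. This is weaker than the claim by the factor $1/\mu(A)$.

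The fix is the sharp bound $\beta(\nu)\ls\psi_\nu$, valid for any Borel probability measure $\nu$ on $\R^n$ with a density. It follows from the co-area inequality $\int|\nabla f|\,d\nu\gr\int_{\R}\nu^+(\partial\{f>s\})\,ds$ together with the identity $\int|f-m_\nu(f)|\,d\nu=\int_{\R}\min\{\nu(f>s),\nu(f\ls s)\}\,ds$. With it, $\psi_\mu\ls\frac{4}{2\mu(A)-1}\psi_{\mu_A}\ls\frac{8\mu(A)}{2\mu(A)-1}\psi_{\mu_A}$, since $\mu(A)>\tfrac12$.

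Alternatively, argue directly. Whenever $\mu(E)=\tfrac12$ you have $\mu^+(\partial E)\gr\mu(A)\,\mu_A^+(\partial E)\gr(\mu(A)-\tfrac12)\chi_{\mu_A}$. Concavity of the isoperimetric profile of $\mu$ then gives $\psi_\mu\ls\psi_{\mu_A}/(2\mu(A)-1)$.
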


\bigskip

\section{The dimensional Brunn--Minkowski inequality}\label{section:3}

In what follows, $\mu$ is a probability measure on $\mathbb{R}^n$ with an even twice continuously differentiable density $f=\exp(-\psi)$.
We also consider the operator
$$Lu=\Delta u-\langle \nabla u,\nabla\psi\rangle $$
for twice continuously differentiable $u:\mathbb{R}^n\to \mathbb{R}$.
We denote by $\mathcal{C}^2$ the class of $C^2$-smooth symmetric convex bodies; these are the convex bodies whose support function is 
twice continuously differentiable on the unit sphere. If $K$ is a $C^2$-smooth symmetric convex body 
in $\mathbb{R}^n$, we write $n_x$ for the normal vector at the point $x\in\partial K$.
We also denote by $\mathcal{C}^2_+$ the class of convex bodies in $\mathcal{C}^2$ with positive Gaussian curvature. We say that a function 
$h:S^{n-1}\rightarrow \mathbb{R}$ is a function in $C^2_+(S^{n-1})$ if it is the support function of a convex body in $\mathcal{C}^2_+$. 

Following Livshyts \cite{Liv24}, we define the concavity power $p(\mu,K)$ of a symmetric convex body $K \in \mathcal{C}_+^2$ with respect to $\mu$ to be the largest $p\gr 0$ such that for every symmetric convex body $L\in\mathcal{C}_+^2$, we have
\begin{equation} \label{eq:conc-power}
\frac{d^2}{d \lambda^2} \Bigg|_{\lambda=1} \mu(\lambda K +(1-\lambda)L)^{p} \ls  0.
\end{equation}
We shall need the following straightforward fact.

\begin{lemma} \label{lem:posi}
For every log-concave probability measure $\mu$ on $\R^n$ with an even twice continuously differentiable density, every symmetric convex body $K\in\mathcal{C}_+^2$ and every invertible linear operator $T:\R^n\to\R^n$,
$$p(\mu,K) = p(T_\ast\mu, TK),$$
where $T_\ast\mu$ is the push-forward of $\mu$ under $T$ given by $(T_\ast\mu)(E)=\mu(T^{-1}E)$, where $E\subseteq\R^n$ is a Borel set.
\end{lemma}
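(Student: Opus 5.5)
The plan is to show that linear maps commute with Minkowski combinations and with the push-forward, so that the functions whose second derivatives are tested in \eqref{eq:conc-power} are literally the same for $(\mu,K)$ and $(T_\ast\mu,TK)$. Fix a symmetric convex body $L\in\mathcal{C}^2_+$. Since $T$ is linear, $T(\lambda K+(1-\lambda)L)=\lambda TK+(1-\lambda)TL$ for all $\lambda$ near $1$. Hence
$$T_\ast\mu\big(\lambda TK+(1-\lambda)TL\big)=\mu\big(T^{-1}(\lambda TK+(1-\lambda)TL)\big)=\mu(\lambda K+(1-\lambda)L).$$
So, for every $p\gr 0$, the functions $\lambda\mapsto \mu(\lambda K+(1-\lambda)L)^p$ and $\lambda\mapsto T_\ast\mu(\lambda TK+(1-\lambda)TL)^p$ coincide, and therefore so do their second derivatives at $\lambda=1$.

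The second step is to check that the map $L\mapsto TL$ is a bijection of the class of symmetric convex bodies in $\mathcal{C}^2_+$, with inverse $L\mapsto T^{-1}L$. Symmetry and convexity are clearly preserved. For regularity I would use $h_{TL}(x)=h_L(T^{t}x)$. Since $h_L$ is $1$-homogeneous, $C^2$ smoothness of $h_L$ on $\R^n\setminus\{0\}$ is equivalent to that of $h_{TL}$, because $T^t$ is a linear diffeomorphism of $\R^n\setminus\{0\}$.

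Positive Gaussian curvature means the boundary is $C^2$ with nondegenerate second fundamental form. Equivalently, $\nabla^2 h_L$ restricted to $x^\perp$ is positive definite for $x\neq 0$. The identity $\nabla^2 h_{TL}(x)=T\,\nabla^2h_L(T^tx)\,T^t$ preserves this nondegeneracy, since both Hessians have kernel exactly the radial direction. Alternatively, one can argue on the boundary: $T$ is a linear diffeomorphism mapping $\partial L$ to $\partial TL$, and positive definiteness of the second fundamental form of a locally convex hypersurface is preserved by affine maps. The same reasoning applied to $K$ gives $TK\in\mathcal{C}^2_+$. Moreover $T_\ast\mu$ has density $|\det T|^{-1}f(T^{-1}x)$, which is again even, $C^2$ and log-concave, so $p(T_\ast\mu,TK)$ is defined.

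Combining the two steps, the set of admissible exponents $p$ in \eqref{eq:conc-power} is the same for $(\mu,K)$ and $(T_\ast\mu,TK)$. Taking the largest such $p$ on each side gives the equality. The only point requiring any care is the invariance of the class $\mathcal{C}^2_+$ under $T$, and the Hessian identity above handles it.
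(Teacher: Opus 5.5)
Your proof is correct and takes the same route as the paper. Linearity of $T$ gives $T_\ast\mu(\lambda TK+(1-\lambda)TL)=\mu(\lambda K+(1-\lambda)L)$, so the functions tested in the definition coincide and the admissible exponents agree. You additionally verify two things the paper's proof uses only implicitly: via $h_{TL}=h_L\circ T^{t}$ and $\nabla^2 h_{TL}(x)=T\,\nabla^2 h_L(T^{t}x)\,T^{t}$, that $L\mapsto TL$ is a bijection of the symmetric $\mathcal{C}^2_+$ bodies, and that $T_\ast\mu$ still satisfies the hypotheses.
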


\begin{proof}
Let $p=p(T_\ast\mu, TK)$ and consider a symmetric convex body $L\in\mathcal{C}_+^2$. Then,
$$\frac{d^2}{d\lambda^2}\Bigg|_{\lambda=1} \mu(\lambda K+(1-\lambda)L)^p = \frac{d^2}{d\lambda^2}\Bigg|_{\lambda=1} (T_\ast\mu)\big(\lambda TK + (1-\lambda)TL\big)^p \ls  0$$
by the definition of $p(T_\ast\mu, TK)$, so $p(\mu,K) \gr  p(T_\ast\mu,TK)$. The converse inequality follows similarly.
\end{proof}

A standard local-to-global principle (see \cite[Section 3]{Kolesnikov-Milman-2022} or \cite[Lemma~3.1]{Kolesnikov-Livshyts-2021}) implies that for a given $\mu$ and $p>0$,
\begin{equation} \label{eq:BM-E-1}
\inf_{K\in \mathcal{C}_+^2} \ p(\mu,K) \gr  p
\end{equation}
if and only if for every pair of symmetric convex sets $K,L$ in $\R^n$ and every $\lambda\in[0,1]$, we have
\begin{equation} \label{eq:BM-E-2}
\mu(\lambda K+(1-\lambda)L)^p \gr  \lambda \mu(K)^p + (1-\lambda)\mu(L)^p.
\end{equation}
Indeed, the derivation of \eqref{eq:BM-E-1} from \eqref{eq:BM-E-2} is immediate from the statement of \cite[Lemma~3.1]{Kolesnikov-Livshyts-2021}. Conversely, to show that \eqref{eq:BM-E-1} implies \eqref{eq:BM-E-2}, we apply \cite[Lemma~3.1]{Kolesnikov-Livshyts-2021} to the class $\mathcal{F}$ of all symmetric $\mathcal{C}_+^2$ convex bodies. Note that equation (21) of \cite{Kolesnikov-Livshyts-2021} requires the validity of $\frac{d^2}{d s^2} \mu(K_s)^p \ls 0$ for any one-parameter family $\{K_s\}_{|s|<\e}$ of symmetric convex sets that arise from Wulff shape perturbations of $K$ but an inspection of the proof of this implication reveals that Minkowski convex combinations are sufficient to yield the conclusion (since the perturbation in the last paragraph of \cite[Proof of Lemma~3.1]{Kolesnikov-Livshyts-2021} is chosen to be $\psi=h_L-h_K$). Then, having proven the Brunn--Minkowski inequality \eqref{eq:BM-E-2} for every pair of symmetric convex bodies $K,L \in \mathcal{C}_+^2$, we extend it to all symmetric convex sets by approximation.

In \cite{Kolesnikov-Milman-2018}, Kolesnikov and E.~Milman developed a powerful method for proving lower bounds for concavity powers by appropriate uses of curvature and integration by parts along the lines of H\"ormander's $L_2$ method (see also \cite[Section~2]{Cordero-Eskenazis-2025}). Their main result, as presented in \cite[Proof of Lemma~2.3]{Kolesnikov-Livshyts-2021}, reads as follows.

\begin{theorem}\label{th:KL}
Let $\mu$ be an even measure with a twice continuously differentiable density $f=\exp(-\psi)$ and $K$ be a $\mathcal{C}_+^2$ convex body in $\R^n$. Suppose that every $u\in C^2(K)$ with $Lu=1$ on $K$ satisfies
\begin{equation} \label{eq:BM-E-3}
\frac{1}{\mu(K)}\int_K \big(\|\nabla^2 u\| + \langle \nabla^2\psi \nabla u, \nabla u\rangle\big)\,d\mu \gr  p,
\end{equation}
where $\|\nabla^2u\|$ is the Hilbert--Schmidt norm of the Hessian of $u$. Then, the concavity power of $K$ with respect to $\mu$ satisfies $p(\mu,K)\gr  p$.
\end{theorem}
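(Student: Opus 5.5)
Theorem~\ref{th:KL} is the Kolesnikov--Milman criterion, and I would prove it by computing the second variation of $\lambda\mapsto\mu(K_\lambda)$, with $K_\lambda=\lambda K+(1-\lambda)L$, in terms of support functions, and then bounding the resulting boundary quadratic form through an elliptic equation and a Bochner--Reilly identity. (The natural quantity in \eqref{eq:BM-E-3} should be $\|\nabla^2u\|^2$; I will work with that form, as in the $\Gamma_2$ discussion of the introduction.)

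\textbf{Step 1 (variation formulas).} Write $h_\lambda=h_K+(1-\lambda)\phi$ with $\phi=h_L-h_K$, and set $\lambda=1+s$. Near $\lambda=1$, $K_\lambda$ stays in $\mathcal{C}_+^2$. The first variation is $\frac{d}{ds}\mu(K_s)=\int_{\partial K} f\,\phi(n_x)\,d\mathcal{H}^{n-1}$. The second variation, which is standard via the Gauss map parametrisation, is
$$\int_{\partial K}\big(H_x f-\langle\nabla f,n_x\rangle\big)\phi^2\,d\mathcal{H}^{n-1}-\int_{\partial K}\langle \mathrm{II}^{-1}\nabla_{\partial K}\phi,\nabla_{\partial K}\phi\rangle f\,d\mathcal{H}^{n-1}.$$
Here $\phi$ is read as a function on $\partial K$ through $n_x$, $H_x$ is the mean curvature, and $\mathrm{II}$ is the second fundamental form. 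The condition $\frac{d^2}{ds^2}\mu^p\le0$ then becomes
$$\mu(K)\,\frac{d^2}{ds^2}\mu(K_s)\le(1-p)\Big(\frac{d}{ds}\mu(K_s)\Big)^2.$$
Since $\phi$ is even, I can subtract constants harmlessly only in a controlled way, so I keep the full $(1-p)$ term.

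\textbf{Step 2 (Neumann problem and Reilly's formula).} Solve $Lu=c$ in $K$ with $\langle\nabla u,n_x\rangle=\phi$ on $\partial K$. Integrating $Lu\,f$ and using the divergence theorem fixes $c=\frac{1}{\mu(K)}\int_{\partial K}\phi f$. Ellipticity and the $C^2_+$ boundary give $C^2$ regularity, and $u$ is even by uniqueness up to constants. Apply the weighted Reilly formula
$$\int_K (Lu)^2\,d\mu=\int_K\big(\|\nabla^2u\|^2+\langle\nabla^2\psi\nabla u,\nabla u\rangle\big)d\mu+\int_{\partial K}\big(H_\mu u_n^2+\langle\mathrm{II}\nabla_{\partial}u,\nabla_\partial u\rangle-2\langle\nabla_\partial u_n,\nabla_\partial u\rangle\big)f,$$
with $H_\mu=H-\langle\nabla\psi,n\rangle$ and $H f-\langle\nabla f, n\rangle$ consistent with Step 1. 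Then use the pointwise inequality
$$\langle\mathrm{II}v,v\rangle-2\langle\nabla_\partial\phi,v\rangle\ge-\langle\mathrm{II}^{-1}\nabla_\partial\phi,\nabla_\partial\phi\rangle.$$
Together these show that the second variation is at most $\int_K(Lu)^2d\mu-\int_K\Gamma_2(u)\,d\mu$, where $\Gamma_2(u)=\|\nabla^2u\|^2+\langle\nabla^2\psi\nabla u,\nabla u\rangle$. The first term equals $c^2\mu(K)=(\int_{\partial K}\phi f)^2/\mu(K)$.

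\textbf{Step 3 (using the hypothesis).} By linearity and homogeneity, $u=c\,u_1$ with $Lu_1=1$; the degenerate case $c=0$ is trivial. The hypothesis applied to $u_1$ gives
$$\int_K\Gamma_2(u)\,d\mu\ge p\,c^2\mu(K)=p\Big(\int_{\partial K}\phi f\Big)^2\Big/\mu(K).$$
Hence $\mu(K)\cdot\text{second variation}\le(1-p)(\text{first variation})^2$, which is exactly Step 1's condition, so $p(\mu,K)\ge p$.

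\textbf{Main obstacle.} The hardest part is the careful derivation of the second variation formula, and matching its boundary terms exactly with those in Reilly's formula, including the signs of $\langle\nabla\psi,n\rangle$. I would follow Kolesnikov--Milman's computation, parametrising $\partial K_s$ by the inverse Gauss map.
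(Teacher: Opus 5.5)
Your plan is the argument the paper sketches after the statement. Kolesnikov--Livshyts' Proposition~3.2 rewrites \eqref{eq:conc-power} as the boundary inequality $\mu(K)\,\mu''\ls(1-p)(\mu')^2$. Then the weighted Reilly formula, applied to the Neumann solution of $Lu=\mathrm{const}$, $\langle\nabla u,n_x\rangle=\phi$, with the boundary term completed to a square and $u$ rescaled to $Lu=1$, turns this into \eqref{eq:BM-E-3}. Your Steps~2--3 carry this out correctly, and you are right that $\|\nabla^2u\|$ should read $\|\nabla^2u\|^2$.

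\textbf{Sign error in Step~1.} Your Step~1 formula has the wrong sign, so the consistency you claim in Step~2 is false as written. The correct second variation is
\begin{equation*}
\frac{d^2}{ds^2}\Big|_{s=0}\mu(K_s)=\int_{\partial K}\big(Hf+\langle\nabla f,n_x\rangle\big)\phi^2\,d\mathcal{H}^{n-1}-\int_{\partial K}\langle\mathrm{II}^{-1}\nabla_{\partial K}\phi,\nabla_{\partial K}\phi\rangle f\,d\mathcal{H}^{n-1}.
\end{equation*}
So the weight is $fH_\mu$ with $H_\mu=H-\langle\nabla\psi,n_x\rangle$. You can check this on $K=rB_2^n$, $\phi\equiv1$, where $\frac{d^2}{ds^2}\mu((r+s)B_2^n)\big|_{s=0}=\int_{\partial K}\big(\frac{n-1}{r}f+\langle\nabla f,n_x\rangle\big)\,d\mathcal{H}^{n-1}$. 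Your weight $Hf-\langle\nabla f,n_x\rangle$ differs from the Reilly weight $fH_\mu$ by $2f\langle\nabla\psi,n_x\rangle$. This has no sign on $\partial K$, even for even convex $\psi$ and symmetric $K$, so Step~2 does not bound the quantity you wrote in Step~1. With the corrected formula the chain closes exactly.

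\textbf{Smaller points.} First, for $c=0$ you need $\int_K\Gamma_2(u)\,d\mu\gr0$ whenever $Lu=0$. This is immediate only if $\nabla^2\psi\gr0$, which the statement does not assume. It does follow from the hypothesis, applied to $u_1+tu$ for all $t\in\R$ (a quadratic in $t$ that must stay $\gr p\,\mu(K)$). Alternatively, replace $\phi$ by $\phi+\e$ and let $\e\to0$.

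Second, a $C^2$ boundary with the merely $C^1$ Neumann datum $\phi\circ n_x$ does not give $u\in C^2(\overline{K})$ by Schauder theory. One needs, e.g., $\partial K\in C^{2,\alpha}$, which makes the datum $C^{1,\alpha}$. So one should work with such bodies and recover the Brunn--Minkowski inequality by approximation; the paper's sketch also leaves this to the literature.

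Third, keep your observation that $u$ is even. It is not needed for the statement as written, but it shows that the hypothesis need only be checked for even $u$. That is exactly what Theorem~\ref{th:L-reduction} supplies when the paper applies this criterion.
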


We emphasize that \cite[Lemma~2.3]{Kolesnikov-Livshyts-2021} presents a similar characterization for the quantity $\inf_{K\in\mathcal{C}_+^2} p(\mu,K)$ via the equivalence of \eqref{eq:BM-E-1} and \eqref{eq:BM-E-2} as the differential inequality \eqref{eq:BM-E-3} is required to hold \emph{for every} symmetric convex set $K$. However, an inspection of the proof readily reveals that the criterion in fact holds for each individual $K$ as well, thus offering a bound for each concavity power $p(\mu, K)$. To see this, observe that one can use \cite[Proposition~3.2]{Kolesnikov-Livshyts-2021} to write the condition \eqref{eq:conc-power} defining the exponent $p(\mu,K)$ as a weighted Poincar\'e-type inequality for functions $f$ defined on the boundary of $K$. Then, one can use Kolesnikov and E.~Milman's \cite{Kolesnikov-Milman-2018} weighted version of the Reilly formula \cite[Proposition~3.4]{Kolesnikov-Livshyts-2021} to rewrite this inequality in terms of the solution of the equation $Lu=1$ with Neumann boundary data $f$. In the case of Gaussian measure, this was also explicitly exploited in \cite[Proof of Theorem A]{Liv24}.

Livshyts \cite{Livshyts-2023} obtains the following estimate, which can be combined with Theorem~\ref{th:KL}. The proof below clarifies a few
subtle points of the original argument.

\begin{theorem}[Livshyts]\label{th:L-reduction}Let $\nu$ be an even log-concave probability measure on $\mathbb{R}^n$. Let $K$ be a
symmetric convex set in $\mathbb{R}^n$ and let $u:K\to\mathbb{R}$ be an even function in $C^2(K)$. Then, for any symmetric
convex set $A\subseteq K$ we have
$$\frac{1}{\nu(K)}\int_K\|\nabla^2u\|^2d\nu \gr \frac{\nu(A)}{\nu(K)}\cdot\frac{\left(\frac{1}{\nu(A)}\int_ALu\,d\nu\right)^2}{n+\frac{1}{\nu(A)}\int_A(\vartheta_{\nu_A}^2|\nabla \psi|^2-2\langle \nabla\psi,x\rangle )\,d\nu}.$$
\end{theorem}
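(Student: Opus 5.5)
The plan is to discard the part of $K$ outside $A$ and then apply one Cauchy--Schwarz inequality, in the Hilbert--Schmidt inner product, against a well-chosen matrix field on $A$. Since $A\subseteq K$, we have $\int_K\|\nabla^2u\|^2d\nu\gr\int_A\|\nabla^2u\|^2d\nu$. After multiplying both sides by $\nu(K)$, it therefore suffices to prove
$$\frac{1}{\nu(A)}\int_A\|\nabla^2u\|^2d\nu\;\gr\;\frac{\left(\frac{1}{\nu(A)}\int_A Lu\,d\nu\right)^2}{n+\frac{1}{\nu(A)}\int_A(\vartheta_{\nu_A}^2|\nabla\psi|^2-2\langle\nabla\psi,x\rangle)\,d\nu}.$$
From now on we work with the probability measure $\nu_A$.

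\textbf{Step 1: symmetry and Poincar\'e.} Because $u$ and $\nu$ are even and $A$ is symmetric, each $\partial_iu$ is odd, so $\int_A\partial_iu\,d\nu_A=0$. The Poincar\'e inequality for $\nu_A$ applied to each $\partial_iu$, summed over $i$, gives
$$\int_A|\nabla u|^2d\nu_A\ls\vartheta_{\nu_A}^2\int_A\|\nabla^2u\|^2d\nu_A.$$
The measure $\nu_A$ is log-concave as the restriction of a log-concave measure to a convex set, so $\vartheta_{\nu_A}<\infty$. Write $a^2=\int_A\|\nabla^2u\|^2d\nu_A$.

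\textbf{Step 2: splitting $\int Lu$.} Write
$$\int_A Lu\,d\nu_A=\int_A\langle\nabla^2u,I\rangle\,d\nu_A-\int_A\langle\nabla u,\nabla\psi\rangle\,d\nu_A.$$
The aim is to represent the second term as $\int_A\langle\nabla^2u,G\rangle\,d\nu_A$ plus a controlled error, for a matrix field $G$ satisfying two conditions:
\begin{itemize}
\item $\int_A\|G\|^2d\nu_A$ is at most about $\vartheta_{\nu_A}^2\int_A|\nabla\psi|^2d\nu_A$;
\item $\int_A\operatorname{tr}G\,d\nu_A$ is at least about $\int_A\langle\nabla\psi,x\rangle\,d\nu_A$.
\end{itemize}
Cauchy--Schwarz would then give
$$\Big(\int_ALu\,d\nu_A\Big)^2\ls a^2\int_A\|I-G\|^2d\nu_A=a^2\int_A\big(n-2\operatorname{tr}G+\|G\|^2\big)\,d\nu_A,$$
which is exactly the required denominator. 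The natural first attempt takes $G$ built from $\nabla\psi\otimes x$. The identity $\nabla^2u\,x=\nabla\langle\nabla u,x\rangle-\nabla u$ lets one trade $\langle\nabla u,\nabla\psi\rangle$ for $\langle\nabla^2u\,x,\nabla\psi\rangle$ up to a gradient term. That gradient term is integrated by parts against $\nu_A$, and the boundary contributions on $\partial A$ carry the sign of $\langle x,n_x\rangle\gr0$, because $A$ is symmetric and convex with $0$ in its interior. The trace of this $G$ is $\langle\nabla\psi,x\rangle$, which produces the cross term $-2\langle\nabla\psi,x\rangle$. The size of $\|G\|^2$ must then be brought down from $|\nabla\psi|^2|x|^2$ to $\vartheta_{\nu_A}^2|\nabla\psi|^2$, and for this I would use the Poincar\'e bound of Step 1 on the odd field $\nabla u$ rather than pointwise bounds.

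\textbf{Fallback and main obstacle.} If the exact matrix field is hard to pin down, a fallback is a one-parameter optimisation. For any $s$, write $\int Lu=\int\langle\nabla^2u,I\rangle-\int\langle\nabla u,\nabla\psi\rangle$. Bound $\big|\int\langle\nabla u,\nabla\psi\rangle\big|\ls\vartheta_{\nu_A}\,a\,(\int|\nabla\psi|^2)^{1/2}$ by Step 1. Independently, use integration by parts, $\int_A\langle\nabla\psi,x\rangle\,d\nu=n\nu(A)-\int_{\partial A}\langle x,n_x\rangle f$, to relate the cross term to $n$, and then complete the square. The main obstacle is precisely this bookkeeping: producing the cross term $-2\langle\nabla\psi,x\rangle$ with the correct sign, while keeping the boundary terms from the integration by parts on $\partial A$ nonpositive. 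Evenness and the convexity of $A$ must both be used there, and I expect this to be the subtle point the statement's proof clarifies.
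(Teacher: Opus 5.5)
Your reduction to $A$ and Step~1 are correct, and the Cauchy--Schwarz framework is viable. However, the proposal never produces its only nontrivial ingredient: a field $G$ satisfying three conditions. These are $\int_A\langle\nabla u,\nabla\psi\rangle\,d\nu_A=\int_A\langle\nabla^2u,G\rangle\,d\nu_A$ \emph{exactly}, $\int_A\|G\|^2d\nu_A\ls\vartheta_{\nu_A}^2\int_A|\nabla\psi|^2d\nu_A$, and $\int_A\operatorname{tr}G\,d\nu_A\gr\int_A\langle\nabla\psi,x\rangle\,d\nu_A$. Conditions that hold only ``about'', or an uncontrolled error term, would not give the stated denominator.

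The candidate you sketch fails on every count:
\begin{itemize}
\item Your identity gives $\langle\nabla u,\nabla\psi\rangle=-\langle\nabla^2u,\nabla\psi\otimes x\rangle+\langle\nabla\langle\nabla u,x\rangle,\nabla\psi\rangle$. So the field is $G=-\nabla\psi\otimes x$, whose trace is $-\langle\nabla\psi,x\rangle$. This puts $+2\langle\nabla\psi,x\rangle$ into the denominator, which is the wrong sign.
\item Its norm is pointwise $|\nabla\psi|\,|x|$, and no Poincar\'e inequality for $u$ can shrink a quantity that does not involve $u$.
\item Integrating the remainder $\int_A\langle\nabla\langle\nabla u,x\rangle,\nabla\psi\rangle\,d\nu$ by parts produces third derivatives of $u$ (or $\Delta\psi$). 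It also produces boundary terms such as $\int_{\partial A}\langle\nabla u,x\rangle\langle\nabla\psi,n_x\rangle e^{-\psi}\,d\mathcal{H}^{n-1}$, which have no sign; $\langle x,n_x\rangle$ never appears.
\end{itemize}

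The fallback does not work either. The triangle inequality gives $|\int_ALu\,d\nu_A|\ls a\big(\sqrt n+\vartheta_{\nu_A}\|\nabla\psi\|_{L^2(\nu_A)}\big)$. Its squared constant exceeds the required denominator by $2\int_A\langle\nabla\psi,x\rangle\,d\nu_A+2\sqrt n\,\vartheta_{\nu_A}\|\nabla\psi\|_{L^2(\nu_A)}\gr0$. Nothing in it depends on a free parameter, so there is no square to complete, and the identity $\int_A\langle\nabla\psi,x\rangle\,d\nu=n\nu(A)-\int_{\partial A}\langle x,n_x\rangle e^{-\psi}$ has nowhere to enter.

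The missing idea is that the cross term is not a boundary effect. It comes from the fact that $\frac12|x|^2$ is itself an even test function. The paper applies Poincar\'e to $\partial_iv_t=\partial_iu-tx_i$, where $v_t=u-\frac t2|x|^2$; this is still odd, hence centered for $\nu_A$. It then uses $\|\nabla^2u\|^2=\|\nabla^2v_t\|^2+2t\Delta v_t+t^2n$ and $\Delta v_t=\langle\nabla\psi,\nabla v_t\rangle+Lu-tn+t\langle x,\nabla\psi\rangle$. Completing the square pointwise via $\vartheta_{\nu_A}^{-2}|\nabla v_t|^2+2t\langle\nabla\psi,\nabla v_t\rangle\gr-t^2\vartheta_{\nu_A}^2|\nabla\psi|^2$ gives
\[
\int_A\|\nabla^2u\|^2d\nu\gr2t\int_ALu\,d\nu-t^2\Big(n\,\nu(A)+\int_A\big(\vartheta_{\nu_A}^2|\nabla\psi|^2-2\langle x,\nabla\psi\rangle\big)\,d\nu\Big),
\]
which is then optimized over $t\in\R$. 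No integration by parts occurs. Evenness is used only for centering, and convexity of $A$ plays no role beyond making $\vartheta_{\nu_A}$ finite.

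If you want to keep your dual formulation, the same observation supplies $G$. Let $G$ be the Riesz representer, in the $L^2(\nu_A)$-closure of Hessians of even functions, of the functional $\nabla^2w\mapsto\int_A\langle\nabla w,\nabla\psi\rangle\,d\nu_A$. By your Step~1 this functional is well defined with norm at most $\vartheta_{\nu_A}\|\nabla\psi\|_{L^2(\nu_A)}$. Since $I=\nabla^2(\frac12|x|^2)$ lies in that space, $\int_A\operatorname{tr}G\,d\nu_A=\int_A\langle x,\nabla\psi\rangle\,d\nu_A$ exactly. Your Cauchy--Schwarz step then yields precisely the stated bound.
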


\begin{proof}We start with the inequality
\begin{equation}\label{eq:first}\int_K\|\nabla^2u\|^2d\nu\gr \int_A\|\nabla^2u\|^2d\nu .\end{equation}
For every $t\in\mathbb{R}$ we define $v_t(x)=u(x)-\frac{t}{2}|x|^2$. Direct computation shows that
\begin{equation}\label{eq:L-18}\|\nabla^2u\|^2=\|\nabla^2v_t\|^2+2t\Delta v_t+t^2n\end{equation}
and
\begin{equation}\label{eq:L-19}(Lu)(x)=(Lv_t)(x)+tL(|x|^2/2)=(Lv_t)(x)+tn-t\langle x,\nabla\psi(x)\rangle .\end{equation}
Therefore,
\begin{equation}\label{eq:L-20}\Delta v_t=\langle\nabla\psi ,\nabla v_t\rangle +Lu-tn+t\langle x,\nabla\psi\rangle .\end{equation}
Since $u$ is even, we have that $v_t$ is also even. Since $A$ is symmetric, the restriction $\nu_A$ of $\nu$ onto $A$ is even. It follows that $\int_A\frac{\partial v_t}{\partial x_i}d\nu_A=0$. 
Using \eqref{eq:L-18} and applying the Poincar\'{e} inequality to $\frac{\partial v_t}{\partial x_i}$ with respect to $\nu_A$ and summing over $i=1,\dots,n$, we get
\begin{equation}\label{eq:L-21}\int_A\|\nabla^2u\|^2d\nu\gr \int_A\big(\vartheta_{\nu_A}^{-2}|\nabla v_t|^2+2t\Delta v_t+t^2n\big)\,d\nu.\end{equation}
Substituting \eqref{eq:L-20} into \eqref{eq:L-21} and completing the square we obtain
\begin{align}\label{eq:L-22}\int_A\|\nabla^2u\|^2d\nu &\gr \int_A\big(-t^2\vartheta_{\nu_A}^2|\nabla \psi|^2+2t(Lu-tn+t\langle x,\nabla\psi\rangle )+t^2n\big)\ d\nu\\
\nonumber &=2t\int_ALu\,d\nu-t^2\left(n\,\nu(A)+\int_A\big(\vartheta_{\nu_A}^2|\nabla \psi|^2-2\langle x,\nabla\psi\rangle \big)\,d\nu\right).
\end{align}
The optimal value of $t$ is
$$t=\frac{\int_ALu\,d\nu}{n\,\nu(A)+\int_A\big(\vartheta_{\nu_A}^2|\nabla v_t|^2-2\langle x,\nabla\psi\rangle \big)\,d\nu},$$
which, combined with \eqref{eq:first}, gives
$$\int_K\|\nabla^2u\|^2d\nu\gr \int_A\|\nabla^2u\|^2d\nu\gr \nu(A)\,\frac{\left(\frac{1}{\nu(A)}\int_ALu\,d\nu\right)^2}{n+\frac{1}{\nu(A)}\int_A(\vartheta_{\nu_A}^2|\nabla \psi|^2-2\langle \nabla\psi,x\rangle) \,d\nu}$$
and the theorem follows.
\end{proof}

\begin{remark}\label{rem:1}\rm 
In \cite[Proposition~4.2]{Livshyts-2023}, Livshyts claims a similar bound to Theorem \ref{th:L-reduction} with $\vartheta_{\nu_A}^2$ replaced by $\vartheta_{\nu_K}^2$ but the proof appears to have a gap. More specifically, rather than starting with inequality \eqref{eq:first}, Livshyts pursues all the steps until \eqref{eq:L-22} on the set $K$ instead of $A$ and then bounds the integral on the right hand side of \eqref{eq:L-22} from below by the corresponding integral on $A$. This step however appears to be problematic as the integrand of this expression may not be pointwise nonnegative.
\end{remark}

The next proposition, which originates in \cite[Corollary~4.6]{Livshyts-2023}, shows that for every even isotropic log-concave probability measure $\nu$ on $\mathbb{R}^n$, $n\gr 10$, with
density $f=\exp(-\psi)$, we can find a symmetric convex body $A$ of measure $\nu(A)\gr c$, on which $|\nabla\psi|\ls Cn^2$. We present here a different proof for completeness.

\begin{proposition}\label{prop:large-set}Let $\nu$ be an even isotropic log-concave probability measure on $\mathbb{R}^n$, $n\gr 10$, with
a $C^1$ density $f=\exp(-\psi)$. There exists a symmetric convex set $A\subseteq\mathbb{R}^n$ such that $\nu(A)\gr c_1$ and
$$|\nabla\psi(x)|\ls c_2n^2$$
for all $x\in A$, where $c_1,c_2>0$ are absolute constants.
\end{proposition}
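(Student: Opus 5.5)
We take $A=R_t(\nu)\cap \tfrac12 R_{s}(\nu)$-type sets; concretely, the natural choice is $A = R_t(\nu)$ shrunk slightly, using that gradients of a convex function on a convex set are controlled by oscillation over a larger set. Fix $t=3n$ (or a constant multiple) and set $A=\tfrac12 R_t(\nu)$. Since $\nu$ is even, $f$ is even, so $\psi$ is even and $R_t(\nu)=\{\psi\ls \psi(0)+t\}$ is a symmetric convex set; hence $A$ is symmetric and convex.

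The gradient bound follows from the standard convexity estimate. For $x\in A$, we have $2x\in R_t(\nu)$, and for a convex $\psi$ the support inequality gives, for any $y$ with $x+y\in R_t(\nu)$,
$$\langle\nabla\psi(x),y\rangle\ls \psi(x+y)-\psi(x)\ls \psi(0)+t-\psi(0)=t,$$
using $\psi(x)\gr\psi(0)$ (true since $\psi$ is even and convex). We would like to take $y=r\theta$ for every direction $\theta$ with $r$ bounded below. The point $x+y$ lies in $R_t$ whenever $x+y=\tfrac12(2x)+\tfrac12(2y)$ with $2y\in R_t$, by convexity. Lemma~\ref{lem:gamma-ball} gives $R_t(\nu)\supseteq\tfrac13B_2^n$ for $t\gr 3n$, $n\gr10$, so we may take $y=\tfrac16\theta$. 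Taking the supremum over $\theta$ gives $|\nabla\psi(x)|\ls 6t=18n$ on $A$, which is even better than $n^2$.

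The measure estimate is the main obstacle: we must show $\nu(\tfrac12R_t(\nu))\gr c_1$. Proposition~\ref{prop:r-2} gives $\nu(R_t)\gr 1-e^{-t/4}$, but halving can lose mass. We will instead choose $t$ larger, of order $n^2$ (or $n\ln n$), and exploit the following fact. For $x\notin\tfrac12R_t$ we have $\psi(2x)-\psi(0)>t$, so by convexity $\psi(x)-\psi(0)\gr \tfrac12(\psi(2x)-\psi(0))>t/2$, that is, $x\notin R_{t/2}$. Hence $\tfrac12R_t\supseteq R_{t/2}$, and Proposition~\ref{prop:r-2} yields $\nu(\tfrac12R_t)\gr 1-e^{-t/8}\gr 1/2$ once $t\gr 6n$. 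So the measure bound is in fact easy, and we would take $t=6n$, giving $|\nabla\psi|\ls Cn$ on $A$ with $\nu(A)\gr 1/2$. This proves the proposition with room to spare. The only care needed is that $\psi(x)\gr \psi(0)$ for even convex $\psi$, and that $\psi$ is $C^1$ so that the support inequality holds with the true gradient.
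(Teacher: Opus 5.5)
Your gradient estimate is correct: for $x\in\tfrac12R_t(\nu)$ and $|y|\ls\tfrac16$ you have $x+y=\tfrac12(2x)+\tfrac12(2y)\in R_t(\nu)$, so $|\nabla\psi(x)|\ls 6t$. The proof breaks at the measure estimate. Convexity gives $\psi(x)\ls\tfrac12\psi(2x)+\tfrac12\psi(0)$, that is, $\psi(x)-\psi(0)\ls\tfrac12(\psi(2x)-\psi(0))$. This is the reverse of the inequality you used. The correct inclusion is therefore $\tfrac12R_t(\nu)\subseteq R_{t/2}(\nu)$, and Proposition~\ref{prop:r-2} gives no lower bound for $\nu(\tfrac12R_t(\nu))$.

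No choice of $t=t(n)$ repairs this. Take $\psi(x)=\sum_{i=1}^n\phi(x_i)$ with $\phi(s)=N\max\{|s|-a,0\}^2+\mathrm{const}$, which is even, convex and $C^1$. Choose $a$ and the constant so that $\nu$ is isotropic; for large $N$, $a$ is close to $\sqrt3$. Then $f\ls(2a)^{-n}$ and $R_t(\nu)\subseteq[-a-\sqrt{t/N},\,a+\sqrt{t/N}]^n$. Hence $\nu(\tfrac12R_t(\nu))\ls\bigl(\tfrac12+\tfrac{1}{2a}\sqrt{t/N}\bigr)^n$, which is of order $2^{-n}$ once $N\gr n^2t$. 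A bound $|\nabla\psi|\ls Cn$ on a symmetric convex set of constant measure would match the lower bound of Proposition~\ref{prop:lower-sup} and improve Theorem~\ref{thm:dimBM-main} to $c/(n^2\ln n)$. The paper leaves that open, which should have been a warning sign.

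The paper handles this trade-off by shrinking only by the factor $\tfrac{n-1}{n}$. For $A=\tfrac{n-1}{n}R_{3n}(\nu)$ one has $A+\tfrac{1}{3n}B_2^n\subseteq R_{3n}(\nu)$. Your support-inequality argument then gives $|\nabla\psi|\ls 3n\cdot 3n=9n^2$ on $A$. For the measure, $\psi(\lambda x)\ls\psi(x)$ for $\lambda\in[0,1]$ by evenness and convexity. This gives $\nu(\lambda R)\gr\lambda^n\nu(R)$, hence $\nu(A)\gr(1-\tfrac1n)^n(1-e^{-3n/4})\gr c$.

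In general, shrinking $R_t(\nu)$ by $\lambda$ gives the gradient bound $3t/(1-\lambda)$ but only the measure bound $\lambda^n\nu(R_t(\nu))$. The near-uniform example above essentially attains that measure bound. Keeping the measure bounded below forces $1-\lambda$ to be of order $1/n$, and this is where the $n^2$ in the statement comes from.
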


\begin{proof}Consider the set $A=\frac{n-1}{n}R_{3n}(\nu)$. From Lemma~\ref{lem:gamma-ball} we know that $R_{3n}(\nu)\supseteq\frac{1}{3}B_2^n$. It follows that
$$A+\frac{1}{3n}B_2^n=\frac{n-1}{n}R_{3n}(\nu)+\frac{1}{3n}B_2^n\subseteq \frac{n-1}{n}R_{3n}(\nu)+\frac{1}{n}R_{3n}(\nu)=R_{3n}(\nu).$$
Now, let $x\in A$. There exists $v_x\in S^{n-1}$ such that $|\nabla\psi(x)|=\langle\nabla\psi(x),v_x\rangle $. Consider the function
$g(s)=\psi(x+sv_x)$. This is a convex function with $g^{\prime}(0)=\langle\nabla\psi(x),v_x\rangle=|\nabla\psi(x)|$. It follows that
\begin{align*}|\nabla\psi(x)| &=g^{\prime}(0)\ls 3n\big(g\left(\tfrac{1}{3n}\right)-g(0)\big)=3n\big(\psi\left(x+\tfrac{1}{3n}v_x\right)-\psi(x)\big)\\
&\ls 3n\big(\psi\left(x+\tfrac{1}{3n}v_x\right)-\psi (0)\big)
\end{align*}
because $\psi(0)=\min(\psi)$. Since $x\in A$, we have $x+\tfrac{1}{3n}v_x\in R_{3n}(\nu)$, which gives $\psi\left(x+\tfrac{1}{3n}v_x\right)-\psi (0)\ls 3n$. 
It follows that
$$|\nabla\psi(x)|=g^{\prime}(0)\ls 3n\cdot 3n= 9n^2.$$
Finally, using Proposition~\ref{prop:r-2} we see that
$$\nu(A)\gr \left(\frac{n-1}{n}\right)^n\nu(R_{3n}(\mu))\gr \left(\frac{n-1}{n}\right)^n\big(1-e^{-3n/4}\big) \gr c$$
where $c>0$ is an absolute constant.\end{proof}

The main result of Livshyts in \cite{Livshyts-2023} asserts that every even log-concave probability measure on $\mathbb{R}^n$ satisfies the
dimensional Brunn-Minkowski inequality with a constant $c_n\gr c/n^4\ln n$, which is improved in Theorem~\ref{thm:dimBM-main}.
An important new ingredient is provided by the next proposition which combines Proposition~\ref{prop:large-set} with Theorem~\ref{thm:gradient-upper} of Eldan and Klartag, that will be discussed in detail in Section \ref{section:5}.

\begin{proposition}\label{prop:large-set-3}Let $\nu$ be an even isotropic log-concave probability measure on $\mathbb{R}^n$, $n\gr 10$, with
a $C^1$ density $f=\exp(-\psi)$. There exists a symmetric convex set $A\subseteq\mathbb{R}^n$ such that $\nu(A)\gr c_1$ and
$$\int_A|\nabla\psi(x)|^2d\nu(x)\ls Cn^3$$
where $c_1,C>0$ are absolute constants.
\end{proposition}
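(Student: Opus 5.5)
Take the same set as in the proof of Proposition~\ref{prop:large-set}, namely
$$A=\tfrac{n-1}{n}R_{3n}(\nu).$$
That argument already shows that $A$ is symmetric and convex, that $\nu(A)\gr c_1$, and that $|\nabla\psi(x)|\ls 9n^2$ for every $x\in A$. Symmetry holds because $\nu$ is even, so $\psi$ is even and $R_{3n}(\nu)$ is symmetric.

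The estimate then follows by splitting $|\nabla\psi|^2=|\nabla\psi|\cdot|\nabla\psi|$ on $A$. We bound one factor pointwise by $9n^2$ and integrate the other against $\nu$ over all of $\mathbb{R}^n$:
$$\int_A|\nabla\psi|^2\,d\nu\ls 9n^2\int_A|\nabla\psi|\,d\nu\ls 9n^2\int_{\mathbb{R}^n}|\nabla\psi|\,d\nu\ls 9n^2\cdot Cn=C'n^3.$$
The last step is Theorem~\ref{thm:gradient-upper} of Eldan and Klartag, which applies since $\nu$ is isotropic and log-concave. The $C^1$ assumption ensures that $\nabla\psi$ is defined pointwise on the interior of the support. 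Isotropy guarantees full support near the origin, so $\nabla\psi$ is defined throughout $A\subseteq R_{3n}(\nu)$.

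There is no real obstacle. The only points to check are that the pointwise bound from Proposition~\ref{prop:large-set} holds on all of $A$ (it does: for $x\in A$ the point $x+\tfrac{1}{3n}v_x$ lies in $R_{3n}(\nu)$), and that $\nu(A)\gr c_1$ with $c_1$ absolute (which is the same computation as before). The novelty lies entirely in replacing the trivial bound $\int_A|\nabla\psi|\,d\nu\ls 9n^2$ by the linear bound $Cn$ of Theorem~\ref{thm:gradient-upper}, which saves a factor of $n$ compared with the $n^4$ coming from $|\nabla\psi|\ls 9n^2$ alone.
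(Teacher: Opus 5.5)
Your proposal is correct and follows essentially the same argument as the paper. Both take the set $A=\tfrac{n-1}{n}R_{3n}(\nu)$ from Proposition~\ref{prop:large-set}, bound one factor of $|\nabla\psi|^2$ pointwise by $9n^2$ on $A$, and control the remaining $\int_{\mathbb{R}^n}|\nabla\psi|\,d\nu$ by the linear Eldan--Klartag bound of Theorem~\ref{thm:gradient-upper}.
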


\begin{proof}Consider the set $A$ from Proposition~\ref{prop:large-set}. We know that $|\nabla\psi(x)|\ls c_2n^2$ for all $x\in A$.
On the other hand, Theorem~\ref{thm:gradient-upper} establishes the bound
$$\int_{\mathbb{R}^n} |\nabla \psi(x)|\, d\mu(x) \ls c_3n,$$
where $c_3>0$ is an absolute constant. Then,
$$\int_A|\nabla\psi(x)|^2d\mu(x)\ls \int_Ac_2n^2|\nabla\psi(x)|\,d\mu(x)\ls c_2n^2\int_{\mathbb{R}^n}|\nabla\psi(x)|\,d\mu(x)\ls Cn^3,$$
where $C=c_2c_3>0$ is an absolute constant.
\end{proof}

We are now in position to complete the proof of the main result of this paper.

\begin{proof}[Proof of Theorem~$\ref{thm:dimBM-main}$] Let $\exp(-\psi)$ be the density of $\mu$. We fix a $\mathcal{C}_+^2$ smooth symmetric convex body $K$ in $\mathbb{R}^n$ and we shall prove that the concavity power satisfies
$$p(\mu, K) \gr  \frac{c}{n^3\ln n}.$$
We will work with the restriction $\mu|_{K}$ of $\mu$ onto $K$, with
density $$\frac{1}{\mu(K)}\mathds{1}_K(x)e^{-\psi(x)}$$ Since $\mu|_{K}$ is centered, there exists an invertible linear map $T$ such that the push-forward $\nu=T_*(\mu|_{K})$ is isotropic. Note that $\nu$ is supported on $TK$ and it is the normalized restriction of $T_\ast\mu$ on $TK$. By Lemma~\ref{lem:posi}, we have
$$p(\mu,K) = p(T_\ast\mu, TK).$$
We write $\exp(-\psi_1)$ for the density of $\nu$ and use Proposition~\ref{prop:large-set} to choose a symmetric convex set 
$A\subseteq TK$ such that $\nu(A)\gr c_1$ and $|\nabla\psi_1(x)|\ls c_2n^2$ for all $x\in A$.

Now, consider a $C^2$ solution $u$ of the equation $Lu\equiv 1$ on $TK$. Since $\nu (TK)=1$, Theorem~\ref{th:L-reduction}
gives 
\begin{align*}
\frac{1}{T_\ast\mu(TK)}\int_{TK} \|\nabla^2u\|^2\, d T_\ast\mu =\int_{TK}\|\nabla^2u\|^2d\nu &\gr \nu(A)\cdot\frac{\left(\frac{1}{\nu(A)}\int_ALu\,d\nu\right)^2}{n+\frac{1}{\nu(A)}\int_A(\vartheta_{\nu_A}^2|\nabla \psi_1|^2-2\langle \nabla\psi_1,x\rangle ) \,d\nu}\\
&=\frac{\nu (A)^2}{n\,\nu(A)+\int_A(\vartheta_{\nu_A}^2|\nabla \psi_1|^2-2\langle \nabla\psi_1,x\rangle )\,d\nu}.
\end{align*}
Note that $\langle \nabla\psi_1,x\rangle \gr 0$ because $\psi_1$ is even and convex. Therefore,
$$n\,\nu(A)+\int_A\big(\vartheta_{\nu_A}^2|\nabla \psi_1|^2-2\langle \nabla\psi_1,x\rangle \big)\,d\nu
\ls n+\int_A\vartheta_{\nu_A}^2|\nabla \psi_1|^2\,d\nu
\ls n+C\vartheta_{\nu_A}^2n^3$$
by Proposition~\ref{prop:large-set-3}. Since $\langle\nabla^2\psi_1\nabla u,\nabla u\rangle \gr 0$, this implies that
$$\int_{TK} \big(\|\nabla^2u\|^2+\langle\nabla^2\psi_1\nabla u,\nabla u\rangle\big)\,d\nu \gr\int_{TK}\big\|\nabla^2u\|^2\,d\nu
\gr \frac{c_1^2}{n+C\vartheta_{\nu_A}^2n^3},$$
and hence \eqref{eq:BM-E-3} gives
$$p(T_\ast\mu,TK) \gr  \frac{1}{C'\vartheta_{\nu_A}^2n^3}$$
for some absolute constant $C'>0$. Since $\nu$ is isotropic and $\nu(A)\gr c_1$, Proposition~\ref{prop:theta-stability-isotropic} implies that $\vartheta_{\nu_A}\ls c_3\sqrt{\ln n}$, and hence
$$p(\mu,K) = p(T_\ast\mu, TK) \gr  \frac{1}{C'\vartheta_{\nu_A}^2n^3}\gr \frac{c}{n^3\ln n}$$
for some absolute constant $c>0$. The conclusion follows from the equivalence of \eqref{eq:BM-E-1} and \eqref{eq:BM-E-2}.
\end{proof}

The discussion in this section leads to the following reduction of the problem to establish a dimensional Brunn-Minkowski inequality with exponent $c_n$.

\begin{proposition}\label{prop:reduction}Let $n\gr 10$. Suppose that there exists a constant $d_n>0$ such that for every even isotropic 
log-concave probability measure $\nu$ on $\mathbb{R}^n$ with a $C^1$ density $f=\exp(-\psi)$ there exists a symmetric convex set 
$A\subseteq\mathbb{R}^n$ such that $\nu(A)\gr c_1$ and
$$|\nabla\psi(x)|\ls d_n$$
for all $x\in A$, where $c_1>0$ is an absolute constant. Then, for any pair of symmetric convex bodies
$K$ and $L$ in $\mathbb{R}^n$ and any $\lambda\in [0,1]$, one has
$$\mu(\lambda K+(1-\lambda)L)^{c_n}\gr\lambda \mu(K)^{c_n}+(1-\lambda)\mu(L)^{c_n},$$
where $c_n\gr c_2/(nd_n\ln n)$ for an absolute constant $c_2>0$.
\end{proposition}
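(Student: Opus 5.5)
The plan is to repeat the proof of Theorem~\ref{thm:dimBM-main} word for word, replacing Proposition~\ref{prop:large-set} by the hypothesised set on which $|\nabla\psi|\ls d_n$, and Proposition~\ref{prop:large-set-3} by the corresponding combination of this pointwise bound with Theorem~\ref{thm:gradient-upper}.

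Fix an even log-concave probability measure $\mu$ with twice continuously differentiable density and a $\mathcal{C}_+^2$ symmetric convex body $K$. By the local-to-global equivalence of \eqref{eq:BM-E-1} and \eqref{eq:BM-E-2}, it suffices to show that
$$p(\mu,K)\gr \frac{c_2}{nd_n\ln n}.$$
Proceed as follows.
\begin{enumerate}
\item Form the normalized restriction $\mu|_K$. It is even, hence centered, so there is $T\in GL_n$ with $\nu=T_\ast(\mu|_K)$ isotropic.
\item By Lemma~\ref{lem:posi}, $p(\mu,K)=p(T_\ast\mu,TK)$.
\item Write $e^{-\psi_1}$ for the density of $\nu$ and apply the hypothesis to $\nu$. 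This gives a symmetric convex set $A$ with $\nu(A)\gr c_1$ and $|\nabla\psi_1|\ls d_n$ on $A$. We may replace $A$ by $A\cap TK$: this does not change $\nu(A)$ and keeps $A$ symmetric and convex.
\end{enumerate}

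Next, take a $C^2$ solution of $Lu=1$ on $TK$ and apply Theorem~\ref{th:L-reduction} with this $A$. Drop the term $-2\langle\nabla\psi_1,x\rangle\ls 0$, which is nonpositive since $\psi_1$ is even and convex. The key estimate is then
$$\int_A|\nabla\psi_1|^2\,d\nu\ls d_n\int_{\R^n}|\nabla\psi_1|\,d\nu\ls Cnd_n,$$
using Theorem~\ref{thm:gradient-upper}. Proposition~\ref{prop:theta-stability-isotropic} gives $\vartheta_{\nu_A}^2\ls C\ln n$, since $A$ is convex with $\nu(A)\gr c_1$. Hence the denominator is at most $n+Cnd_n\ln n$, and
$$\int_{TK}\big(\|\nabla^2u\|^2+\langle\nabla^2\psi_1\nabla u,\nabla u\rangle\big)\,d\nu\gr \frac{c_1^2}{n+Cnd_n\ln n}.$$
Theorem~\ref{th:KL} then yields $p(T_\ast\mu,TK)\gr c_2/(nd_n\ln n)$.

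For this last simplification we need $d_n$ bounded below by an absolute constant. This holds automatically: by symmetry $\nu(A\cap\{\langle\nabla\psi_1,x\rangle\gr\ldots\})$ is large, and more simply one can enlarge $d_n$ to $\max\{d_n,1\}$, which only weakens the conclusion. One could even note that Proposition~\ref{prop:large-set} already shows $d_n=9n^2$ is admissible.

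The main technical points are minor:
\begin{itemize}
\item The hypothesis requires a $C^1$ density, whereas $\nu$ has a jump at $\partial(TK)$. As in the proof of Theorem~\ref{thm:dimBM-main}, the hypothesis should be applied to $\nu$ with the understanding that $\psi_1$ is smooth in the interior, where $A$ essentially lives. Alternatively, one approximates by smooth isotropic measures.
\item The integrals in Theorem~\ref{th:L-reduction} are taken over $A\subseteq TK$.
\end{itemize}
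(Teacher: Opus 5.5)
Your proposal is correct and is the argument the paper intends, since the paper derives this proposition from the preceding discussion: rerun the proof of Theorem~\ref{thm:dimBM-main} with the hypothesised set $A\cap TK$, bound $\int_A|\nabla\psi_1|^2\,d\nu\ls d_n\int_{\R^n}|\nabla\psi_1|\,d\nu\ls Cnd_n$ by Theorem~\ref{thm:gradient-upper}, and use Proposition~\ref{prop:theta-stability-isotropic} for $\vartheta_{\nu_A}^2\ls C\ln n$. The one loose end is the lower bound on $d_n$: replacing $d_n$ by $\max\{d_n,1\}$ only proves a weaker statement, but no replacement is needed, because applying the hypothesis to the standard Gaussian measure $G$ (where $|\nabla\psi(x)|=|x|$, so $A\subseteq d_nB_2^n$) gives $c_1\ls G(\{|x_1|\ls d_n\})\ls d_n\sqrt{2/\pi}$, i.e.\ $d_n\gr c_1\sqrt{\pi/2}$, which the constant $c_2$ absorbs.
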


\begin{remark}\label{rem:better}\rm We know that the assumption of Proposition~\ref{prop:reduction} is satisfied with $d_n\approx n^2$. A natural question raised by the proof of
Theorem~\ref{thm:dimBM-main} is whether one can obtain a stronger version of Proposition~\ref{prop:large-set}.

Note that the proof of Theorem~\ref{thm:dimBM-main} uses the convexity of the set $A$ only at the point where it is claimed that
$\vartheta_{\nu_A}\ls C\vartheta_{\nu}$. For this assertion we employ Proposition~\ref{prop:theta-stability-isotropic}, which requires
that $\nu_A$ is a log-concave probability measure, and this forces us to choose our set $A$ to be convex. On the other hand, we know that
$$\int_{\mathbb{R}^n} |\nabla \psi(x)|\, d\nu(x) \ls Cn,$$
hence applying Markov's inequality we see that the set 
$$A_0=\{x:|\nabla\psi(x)|\ls 2Cn\}$$ has measure $\nu(A_0)\gr\frac{1}{2}$, because
$$\nu(\{x:|\nabla\psi(x)|>2Cn\})\ls\frac{1}{2Cn}\int_{\mathbb{R}^n}|\nabla\psi (x)|\,d\nu(x)\ls \frac{1}{2}.$$
Unfortunately, $A_0$ need not be convex as can be seen by the example of the function $\psi:\R^2\to\R$ with
$$\psi(x,y) = \sqrt{x^2+1}+\sqrt{y^2+1}.$$ 
This is clearly an even convex function and
$$\nabla \psi(x,y) = \left(\frac{x}{\sqrt{x^2+1}}, \frac{y}{\sqrt{y^2+1}}\right).$$
Therefore, the set
$$A_0 = \left\{ (x,y)\in\R^2: \ |\nabla\psi(x,y)|\ls 1\right\} = \{(x,y)\in\R^2: \ |x||y| \ls 1\}$$
is evidently non-convex. It is an interesting question whether one can still compare $\vartheta_{\nu_{A_0}}$ and $\vartheta_{\nu}$ in this context. 
Having an estimate $\vartheta_{\nu_{A_0}}\ls C\vartheta_{\nu}$ (or some weaker but good enough estimate) would be enough for a
stronger estimate for $c_n$ in Theorem~\ref{thm:dimBM-main}.
\end{remark}

We conclude this section with a lower bound for the parameter $d_n$ studied in Proposition \ref{prop:reduction}.

\begin{proposition}\label{prop:lower-sup}There exists an even isotropic log-concave probability measure $\mu$ with continuous density $f=e^{-\psi}$ on $\mathbb{R}^n$ such that for every symmetric convex body $B$ in $\R^n$, we have
$$\big\||\nabla \psi|\big\|_{L_\infty(B)}\gr cn,$$
where $c>0$ is an absolute constant.
\end{proposition}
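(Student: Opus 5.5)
The plan is to take $\psi$ to be a multiple of a norm, so that $|\nabla\psi|$ is constant almost everywhere and in particular does not become small near the origin. This is where the assumption that the density is merely continuous (not $C^1$) matters: for a smooth even $\psi$ we would have $\nabla\psi(0)=0$, and small bodies $B$ around $0$ would violate the claim. A Euclidean norm $a|x|$ only gives $|\nabla\psi|=a\approx\sqrt n$ after isotropic normalization, and so does $a\|x\|_1$. So I will use the cube norm $\|x\|_\infty$, whose gradient is a single coordinate vector almost everywhere.

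Concretely, set $f(x)=Z^{-1}e^{-b\|x\|_\infty}$, with $b>0$ to be chosen and $Z$ the normalizing constant. This density is even, continuous and log-concave, and $\psi(x)=b\|x\|_\infty+\ln Z$. Wherever a unique coordinate $i$ attains $\max_j|x_j|$ (this fails only on a union of finitely many hyperplanes, a null set), $\psi$ is differentiable and $\nabla\psi(x)=b\,{\rm sign}(x_i)e_i$. Hence $|\nabla\psi|=b$ almost everywhere.

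Next comes isotropy. The measure is invariant under coordinate permutations and sign changes, so it is centered and its covariance matrix is a multiple of the identity. To compute the multiple, use the layer-cake identity $e^{-b\|x\|_\infty}=\int_{\|x\|_\infty}^\infty b e^{-bt}\,dt$. It shows that $\mu$ is a mixture of the uniform measures on the cubes $t[-1,1]^n$, where the mixing variable $t$ has density proportional to $t^n e^{-bt}$, i.e.\ $t\sim\mathrm{Gamma}(n+1,b)$. Given $t$, the coordinate $x_1$ is uniform on $[-t,t]$, so $\E x_1^2=\E t^2/3=(n+1)(n+2)/(3b^2)$. Choosing
$$b=\sqrt{(n+1)(n+2)/3}\gr n/\sqrt3$$
makes $\mu$ isotropic.

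Finally, let $B$ be any symmetric convex body. It has nonempty interior, hence positive Lebesgue measure, so it contains points where $\psi$ is differentiable with $|\nabla\psi|=b$. Therefore $\big\||\nabla\psi|\big\|_{L_\infty(B)}=b\gr n/\sqrt3$, which gives the claim with $c=1/\sqrt3$.

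The only real obstacle is finding the right norm, one with gradient of size $\approx n$ after normalization; once the cube norm is chosen, the verification above is short. Everything else is routine: the null-set bookkeeping for differentiability, and checking that "continuous density" is all the statement requires.
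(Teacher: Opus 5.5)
Your proof is correct, and your measure is in fact the one the paper uses. The paper takes the density $\frac{1}{n!\vol_n(K)}e^{-\|x\|_K}$ with $K$ a multiple of $B_\infty^n$, so its $\psi$ is also a multiple of $\|x\|_\infty$ plus a constant. The two arguments differ only in how the bound is verified.

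The paper never computes $\nabla\psi$ pointwise. Instead it:
\begin{enumerate}
\item applies the generalized co-area formula \eqref{eq:gen-coarea} to the truncations $f\mathds{1}_{tK}$ to show that the $\nu_K$-average of $|\nabla\psi|$ over $tK$ equals $S(K)/(n\vol_n(K))$, for every $1$-symmetric $K$ and every $t>0$;
\item uses $L_{\nu_K}\approx 1$ to get $\vol_n(K)^{1/n}\approx 1/n$, which reduces the average to the isoperimetric ratio $S(K)/\vol_n(K)^{(n-1)/n}$, of order $n$ for the cube;
\item passes from the average to the supremum, using that every symmetric convex body $B$ contains some $tK$.
\end{enumerate}

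Your route is more elementary. You note that $\nabla\|x\|_\infty$ is a signed coordinate vector off the hyperplanes $x_i=\pm x_j$, and you fix the isotropic normalization exactly via the $\mathrm{Gamma}(n+1,b)$ mixture of uniform cube measures. This gives the explicit constant $c=1/\sqrt3$ and avoids both the co-area machinery and any appeal to bounds on the isotropic constant.

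It also proves more. Since $|\nabla\psi|=b$ almost everywhere, the bound holds on every set of positive measure, so no symmetry or convexity of $B$ is needed. It also immediately gives $\int|\nabla\psi|\,d\mu=b\gr n/\sqrt3$, which is Proposition~\ref{prop:lower-n}.

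What the paper's identity buys in return is generality and explanation. It holds for an arbitrary $1$-symmetric norm, and it shows why the cube is the right choice: the quantity being bounded is exactly the isoperimetric ratio of $K$.
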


\begin{proof}Let $\nu_K$ be an isotropic measure on $\mathbb{R}^n$ with density $f(x)=e^{-\psi(x)}=\frac{1}{n!\vol_n(K)}e^{-\|x\|_K}$, where $K$ is a $1$-symmetric convex body
and $\|\cdot\|_K$ is the norm induced by $K$. Then, we know that
$$f(x_1,\ldots,x_n)=f(\epsilon_1 x_{\sigma(1)},\ldots,\epsilon_n x_{\sigma(n)})$$ 
for all choices of signs $\epsilon_i\in\{-1,1\}$ and all permutations $\sigma$ of $\{1,\ldots,n\}$. 
Since $K$ is $1$-symmetric, we easily check that $\overline{K}=\vol_n(K)^{-1/n}K$ is isotropic.
Note that 
$$\frac{1}{\left(n!\vol_n(K)\right)^{1/n}}=\|f\|_{\infty}^{1/n}=L_{\nu_K}\approx 1,$$
which implies that
$$\vol_n(K)^{1/n}\approx \frac{1}{(n!)^{1/n}}\approx \frac{1}{n}.$$
It is straightforward to check that $R_t(\nu_K)=tK$ for every $t>0$. We shall show that
\begin{equation}\label{eq:identity}\frac{1}{\nu_K(tK)}\int_{tK}|\nabla \psi(x)|\,d\nu_K(x) =\frac{S(K)}{n\vol_n(K)}\end{equation}
for every $t>0$.

To see this, consider the truncated function $f_t = f \cdot \mathds{1}_{tK} = f\cdot  \mathds{1}_{R_t(\nu_K)}$ which is log-concave and upper semi-continuous. Therefore, the co-area formula \eqref{eq:gen-coarea} yields
\begin{align} \label{equ11}
\int_0^\infty \mathcal{H}^{n-1} (\partial\{x: f_t(x)\gr u\})\, du & = \int_{\R^n} |\nabla f_t(x)| \, d x + \int_{t\partial K} f(x)\, d\mathcal{H}^{n-1}(x)\\ 
\nonumber & = \int_{tK} |\nabla \psi(x)|\, d\nu_K(x) + t^{n-1} e^{-t}f(0)S(K)
\end{align}
as $f|_{t\partial K} \equiv e^{-t}f(0)$. On the other hand,
\begin{align} \label{equ22}
\int_0^\infty \mathcal{H}^{n-1}  (\partial\{x: f_t(x)\gr u\})\, du & = \int_0^\infty e^{-s} f(0) \mathcal{H}^{n-1}(\partial\{x: \ f(x)  \cdot \mathds{1}_{tK}(x) \gr e^{-s} f(0)\})\, ds\\ 
\nonumber & = f(0) \int_0^t e^{-s} \mathcal{H}^{n-1}(s\partial K)\, ds + f(0)\int_t^\infty e^{-s} \mathcal{H}^{n-1}(t\partial K) \, ds\\ 
\nonumber & = \bigg(\int_0^t s^{n-1} e^{-s}\, ds + t^{n-1} e^{-t}\bigg) f(0)S(K).
\end{align}
Combining \eqref{equ11} and \eqref{equ22}, we deduce that
\begin{equation}
\int_{tK} |\nabla \psi(x)|\, d\nu_K(x) = f(0) S(K) \int_0^t s^{n-1} e^{-s} \, ds.
\end{equation}
On the other hand,
\begin{align*}
\nu_K(tK) & = f(0) \int_{tK} e^{-\|x\|_K}\, dx =t^n f(0)\int_Ke^{-t\|y\|_K}dy
=t^n f(0) \int_0^{\infty}e^{-s}\vol_n(\{y\in K:\|y\|_K\ls s/t\})\,ds\\
&=t^n f(0)\left(\vol_n(K)\int_0^t(s/t)^ne^{-s}ds+\vol_n(K)\int_t^{\infty}e^{-s}ds\right) =f(0) \vol_n(K) \left(\int_0^ts^ne^{-s}ds+t^ne^{-t}\right)\\ 
& = n \vol_n(K) f(0) \int_0^t s^{n-1} e^{-s}\, ds,
\end{align*}
where the last equality is because of integration by parts. Formula \eqref{eq:identity} now follows.

Finally, we write 
$$\frac{S(K)}{n\vol_n(K)}=\frac{1}{n\vol_n(K)^{\frac{1}{n}}}\,\frac{S(K)}{\vol_n(K)^{\frac{n-1}{n}}}\approx \frac{S(K)}{\vol_n(K)^{\frac{n-1}{n}}}$$
and choose $K$ to be a multiple of $B_{\infty}^n$. For this choice, the last quantity is of order $n$. Thus,
$$\frac{1}{\nu_K(tK)}\int_{tK}|\nabla \psi(x)|\,d\nu_K(x)\gr cn,$$ which implies $\||\nabla\psi(x)|\|_{L_\infty(tK)}\gr cn$ for every $t>0$. As for every symmetric convex body $B$ there exists $t>0$ for which $tK\subseteq B$, the conclusion follows.
\end{proof}

This shows that the best one can hope with the ideas that are exploited in the present work is a dimensional Brunn--Minkowski inequality with 
exponent $c_n\approx 1/n^2\ln n$. Obtaining an exponent $c_n >\!\!\!> c/n^2 \ln n$  would require further 
ideas.

\section{Bounds for the weighted perimeter}\label{section:4}

As mentioned in the introduction, Theorem \ref{th:nu-f} (whose proof will appear in Section \ref{section:6}) implies Corollary \ref{cor:H}, which asserts that for every 
isotropic log-concave function $f$ on $\mathbb{R}^n$, we have
\begin{equation} \label{eq:H-again}
\int_0^\infty \mathcal{H}^{n-1}(\partial \{x: \ f(x)\gr t\}) \, dt \ls Cn, 
\end{equation}
This will allow us to bound the maximal perimeter of the measure $\mu$ with density $f$.

\begin{proof} [Proof of Theorem \ref{thm:weight-per}]Let $f$ be the density of the isotropic log-concave probability measure $\mu$ on $\R^n$, and let $A\subset\mathbb{R}^n$ be an arbitrary convex body. Set
$$K_t=\{x: \ f(x)\gr t\},\qquad t\gr 0.$$
We have
\begin{equation*}\label{eq:max-per-1}
\mu^+(\partial A) = \int_{\partial A} f(x)\,d\mathcal{H}^{n-1}(x).
\end{equation*}
Applying the layer-cake representation on $\partial A$, we get
\begin{align*}\label{eq:max-per-2}
\int_{\partial A} f(x)\,d\mathcal{H}^{n-1}(x)
&=\int_{\partial A} \int_0^{f(x)} dt\,d\mathcal{H}^{n-1}(x) =\int_0^{\infty}\mathcal{H}^{n-1}\left(\partial A\cap \{f\gr t\}\right)\,dt =\int_0^{\infty}\mathcal{H}^{n-1}(\partial A\cap K_t)\,dt.
\end{align*}
Note that for a closed subset $C\subseteq \R^n$ and an arbitrary subset $M\subseteq \R^n$, we have
$$\partial C \cap M \subseteq \partial (C\cap M).$$
Therefore,
$$\mathcal{H}^{n-1}(\partial A\cap K_t) \ls \mathcal{H}^{n-1}(\partial (A\cap K_t))\ls \mathcal{H}^{n-1}(\partial K_t),$$
where the last inequality is the monotonicity of surface area of convex sets with respect to inclusion. Combining the above, we get by \eqref{eq:H-again} that
$$\mu^+(\partial A) \ls \int_0^{\infty} \mathcal{H}^{n-1}(\partial K_t)\,dt\ls Cn.$$
This implies that $\Gamma(\mu) \leq Cn$, which in turn yields $\Gamma_n \leq Cn$. The asymptotically matching lower bound follows from considering $\mu$ to be the isotropic probability measure on a cube (see also \cite[Section~5]{BGHT}).
\end{proof}

\section{Bounds for the functional perimeter}\label{section:5}

Let $\mu $ be a log-concave probability measure on $\mathbb{R}^n$ with density $f=e^{-\psi}$, where $\psi$ is a convex function.
In this section we give a new proof of the fact that if $f$ is also assumed to be isotropic, then we have
\begin{equation}\label{eq:gradient-1}\int_{\mathbb{R}^n}|\nabla \psi(x)|\,d\mu(x)\ls Cn\end{equation}
for an absolute constant $C>0$. Note that in the $1$-dimensional case if $f$ is symmetric, we get from \cite{Fradelizi-1997}(see~\eqref{eq:dim-1}),
$$\int_{\mathbb{R}} |\psi^{\prime}(x)|\,d\mu(x) = 2 \int_0^{\infty} \psi^{\prime}(x) e^{-\psi(x)}\,dx = 2 e^{-\psi(0)}=2f(0) \ls 2.$$

\bigskip 

\noindent \textbf{\S 5.1. Optimal upper bounds.} Our starting point is the fact (see \cite[Equation~4]{Alves-Gonzalez-Villa-2023})
that if $f$ is an integrable log-concave function such that
\begin{equation}\label{eq:condition}f\gr \alpha\mathds{1}_{B_2^n}\end{equation}
for some $\alpha>0$ then
\begin{equation}\label{eq:very-useful}\int_{\mathbb{R}^n}|\nabla f(x)|\,dx\ls n\int_{\mathbb{R}^n}f(y)\,dy+\int_{\mathbb{R}^n}f(z)\ln\left(\frac{f(z)}{\alpha\|f\|_{\infty}}\right)\,dz.\end{equation}
This follows from \cite[Lemma~4.3]{Alonso-Gonzalez-Jimenez-Villa-2018}.

Since we need a variant of this inequality, we present the full details. In what follows, the sum of two log-concave
functions is given by the sup-convolution (or Asplund product)
$$(f\star g)(x)=\sup_{y\in\mathbb{R}^{n}}\big(f(y)g(x-y)\big).$$
The dilation operation is given by $(t\cdot f)(x)=f\left(\frac{x}{t}\right)^t$.

\begin{lemma}\label{lem:alonso}Let $f:\mathbb{R}^n\to [0,\infty)$ be a log-concave function. For any $r,a>0$ we have
$$\lim_{t\to 0^+}(f\star t\cdot (a\mathds{1}_{rB_2^n}))(z)=f(z)$$
at every point of continuity $z$ of $f$, and
$$\lim_{t\to 0^+}\frac{(f\star t\cdot (a\mathds{1}_{rB_2^n}))(z)-f(z)}{t}=r|\nabla f(z)|+f(z)\ln a$$
almost everywhere.\end{lemma}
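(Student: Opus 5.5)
First I compute the sup-convolution explicitly. For $g=a\mathds{1}_{rB_2^n}=e^{-\varphi}$ with $\varphi=-\ln a$ on $rB_2^n$ and $+\infty$ outside, the dilation $t\cdot g$ equals $a^t\mathds{1}_{trB_2^n}$: indeed $g(x/t)^t=a^t$ if $|x|\ls tr$ and $0$ otherwise. Therefore
$$(f\star t\cdot g)(z)=\sup_{|w|\ls tr} f(z-w)\,a^t = a^t\max_{|w|\ls tr} f(z+w),$$
where the supremum is attained because $f$ is upper semi-continuous (or we simply use the supremum). This reduces everything to the local maximum $M_t(z):=\sup_{B(z,tr)}f$.

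For the first claim, I use continuity at $z$. We have $f(z)\ls M_t(z)$, and continuity gives $M_t(z)\to f(z)$ as $t\to0^+$, since the balls $B(z,tr)$ shrink to $z$. Also $a^t\to1$, so the product converges to $f(z)$.

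For the derivative, I write
$$\frac{a^tM_t(z)-f(z)}{t}=a^t\,\frac{M_t(z)-f(z)}{t}+f(z)\,\frac{a^t-1}{t}.$$
The second term tends to $f(z)\ln a$. For the first term, I restrict to points where $f$ is differentiable. A log-concave function is locally Lipschitz, hence differentiable a.e., in the interior of its support $K_f$. The boundary $\partial K_f$ has Lebesgue measure zero, and outside $K_f$ everything is zero: for $z\notin \overline{K_f}$, both $f(z)=0$ and $M_t(z)=0$ for small $t$. At an interior differentiability point, $f(z+w)=f(z)+\langle\nabla f(z),w\rangle+o(|w|)$ uniformly in $|w|\ls tr$. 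Taking the supremum over $|w|\ls tr$ gives
$$M_t(z)=f(z)+tr|\nabla f(z)|+o(t).$$
The upper bound comes from Cauchy–Schwarz. The lower bound comes from choosing $w=tr\,\nabla f(z)/|\nabla f(z)|$, or any $w$ if $\nabla f(z)=0$. Dividing by $t$ shows the first term converges to $r|\nabla f(z)|$, which yields $r|\nabla f(z)|+f(z)\ln a$.

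The only delicate step is the uniformity of the Taylor remainder over the shrinking ball, together with the a.e. bookkeeping. Uniformity is exactly the definition of (Fréchet) differentiability, so no extra work is needed there. The a.e. bookkeeping uses Rademacher's theorem, or Alexandrov-type differentiability of the convex function $\psi=-\ln f$ on $\mathrm{int}(K_f)$, together with the fact that the boundary of a convex set is Lebesgue-null.
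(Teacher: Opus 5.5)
Your proposal is correct and follows essentially the same route as the paper. You rewrite the Asplund product as $a^t\sup_{|w|\ls tr}f(z+w)$, split off $f(z)\frac{a^t-1}{t}$, and apply a first-order Taylor expansion of $f$ at almost every point of the interior of the support, with the exterior trivial and the boundary Lebesgue-null; your uniform $o(|w|)$ remainder on the ball is, if anything, slightly more careful than the paper's $c_z(|w|)$.
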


\begin{proof}By the definition of the Asplund product we have
\begin{align*}&\lim_{t\to 0}(f\star t\cdot (a\mathds{1}_{rB_2^n}))(z)=\lim_{t\to 0}\sup\left\{f(x)\cdot \mathds{1}_{rB_2^n}(y/t)a^t:z=x+y\right\} \\
&\hspace*{1cm}=\lim_{t\to 0}\sup\left\{f(z-rty)a^t:y\in B_2^n\right\}=f(z)
\end{align*}
if we assume that $f$ is continuous at $z$. For the second assertion of the lemma, we start by writing
\begin{align*}
\lim_{t\to 0^+}\frac{(f\star t\cdot (a\mathds{1}_{rB_2^n}))(z)-f(z)}{t} &=\lim_{t\to 0^+}\,\sup_{y\in B_2^n}\frac{f(z-rty)a^t-f(z)a^t+f(z)a^t-f(z)}{t}\\
&=\lim_{t\to 0^+}\,\sup_{y\in B_2^n}a^t\frac{f(z-rty)-f(z)}{t}+\lim_{t\to 0^+}f(z)\frac{a^t-1}{t}.
\end{align*}
As the function $\psi=-\ln f$ is convex, it has a first order Taylor expansion around almost every point $z\in \{f>0\}=\{\psi<\infty\}$ (and even a second order Taylor expansion, by Alexandrov's theorem) and thus the same holds for $f$. Moreover, for such points $z$ the gradient $\nabla f(z)$ is uniquely defined and we have
$$f(z+w) = f(z) + \langle \nabla f(z), w\rangle + c_z(|w|)$$
for some function $c_z:(0,\infty)\to \R$ with $\lim_{t\to0^+} c_z(t)/t =0$. Therefore,
\begin{align*}
\lim_{t\to 0^+}\,\sup_{y\in B_2^n}a^t\frac{f(z-rty)-f(z)}{t} & = \lim_{t\to 0^+}\, a^t \, \sup_{y\in B_2^n} \left\{- r \langle \nabla f(z), y\rangle + \frac{c_z(rt)}{t} \right\}\\ & = \lim_{t\to0^+} a^t \Big( r|\nabla f(z)| + \frac{c_z(rt)}{t} \Big) = r|\nabla f(z)|.
\end{align*}
Evidently the same holds for every $z\in\R^n \setminus \overline{\{f>0\}}$ and thus it holds for almost every $z\in\R^n$ as the boundary of $\{f>0\}$ always has Lebesgue measure 0. Finally, $\lim_{t\to 0^+}\frac{a^t-1}{t}=\ln a$, and the lemma follows.
\end{proof}

The next lemma is a variant of \cite[Lemma~4.3]{Alonso-Gonzalez-Jimenez-Villa-2018}. Similar computations can be found in \cite{Colesanti-Fragala-2013}.

\begin{lemma}\label{lem:col-frag}Let $f=e^{-\psi}:\mathbb{R}^n\to [0,\infty)$ be an integrable log-concave function. Then,
$$\lim_{t\to 0^+}\frac{\int_{\mathbb{R}^n}(f\star (t\cdot f))(x)\,dx-\int_{\mathbb{R}^n}f(x)\,dx}{t}=n\int_{\mathbb{R}^n}f(x)\,dx +
\int_{\mathbb{R}^n}f(x)\ln f(x)\,dx.$$
\end{lemma}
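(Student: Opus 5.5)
The plan is to compute $f\star(t\cdot f)$ explicitly rather than differentiate a supremum. The key observation is that for log-concave $f=e^{-\psi}$ the Asplund product with a dilate of itself is again a dilate: $f\star(t\cdot f)=(1+t)\cdot f$. Indeed,
$$(f\star (t\cdot f))(x)=\exp\Big(-\inf_{y}\big\{\psi(y)+t\psi\big(\tfrac{x-y}{t}\big)\big\}\Big),$$
and writing $y=\frac{1}{1+t}x+w$, convexity of $\psi$ gives
$$\psi(y)+t\psi\big(\tfrac{x-y}{t}\big)\gr (1+t)\,\psi\Big(\tfrac{1}{1+t}y+\tfrac{t}{1+t}\cdot\tfrac{x-y}{t}\Big)=(1+t)\,\psi\big(\tfrac{x}{1+t}\big).$$
Equality is attained at $y=\frac{x}{1+t}$, since then $\frac{x-y}{t}=\frac{x}{1+t}$. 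Hence $(f\star(t\cdot f))(x)=f\big(\frac{x}{1+t}\big)^{1+t}$. This uses only convexity of $\psi$ on $\R^n$ with the value $+\infty$ allowed, so it is valid on the whole space, including outside the support.

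Next we change variables $x=(1+t)z$:
$$\int_{\R^n}(f\star(t\cdot f))(x)\,dx=(1+t)^n\int_{\R^n}f(z)^{1+t}\,dz=:F(t).$$
So the left-hand side of the lemma equals $F'(0^+)$, provided the derivative exists. Formally,
$$F'(0)=n\int f+\int f\ln f,$$
since $\frac{d}{dt}f^{1+t}=f^{1+t}\ln f$. This is exactly the claimed expression.

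The main obstacle is justifying the differentiation under the integral sign, that is, showing
$$\frac{1}{t}\int\big(f^{1+t}-f\big)\to\int f\ln f.$$
We split $\{f>0\}$ into $\{f\ls 1\}$ and $\{f>1\}$.

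On $\{f>1\}$: we have $f\ls\|f\|_\infty<\infty$, since an integrable log-concave function is bounded by \eqref{eq:A-B}. The map $t\mapsto (f^{1+t}-f)/t=f\,\frac{e^{t\ln f}-1}{t}$ is bounded for $t\ls 1$ by $f\,\|f\|_\infty\ln\|f\|_\infty$. The set $\{f>1\}$ has finite measure, so dominated convergence applies.

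On $\{0<f\ls 1\}$: here $\ln f\ls 0$ and the quotient $f\,\frac{e^{t\ln f}-1}{t}$ is nonpositive. By convexity of $s\mapsto e^{s}$, its absolute value is monotonically increasing to $f|\ln f|$ as $t\downarrow 0$. Monotone convergence then gives convergence to $\int_{\{f\ls1\}}f\ln f$. This limit is finite, because $f|\ln f|\ls C_\e f^{1-\e}$ and $f^{1-\e}$ is integrable by the exponential decay \eqref{eq:A-B}.

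Finally, the factor $(1+t)^n$ contributes $n\int f$ by the product rule, since $\int f^{1+t}\to\int f$ by the same dominated and monotone arguments. Combining the three pieces yields the formula.
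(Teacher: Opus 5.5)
Your proposal is correct and follows the same route as the paper. In both, convexity of $\psi$ gives $f\star(t\cdot f)=f(\cdot/(1+t))^{1+t}$ with equality at $y=x/(1+t)$, a change of variables yields $(1+t)^n\int f^{1+t}$, and the limit is passed by convergence theorems. Your split into $\{f>1\}$ and $\{0<f\ls 1\}$, together with the check that $\int f|\ln f|<\infty$, is a more detailed justification of the paper's single ``monotone convergence'' step. In fact $t\mapsto (e^{-t\psi}-1)/t$ is monotone in $t$ for every value of $\psi$, so one can avoid the split altogether.
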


\begin{proof}First we observe that
$$(f\star (t\cdot f))(z)=e^{-(1+t)\psi\left(\frac{z}{1+t}\right)}$$
for all $z\in\mathbb{R}^n$. To see this, note that if $z=x+y$ then
\begin{align*}\psi(x)+t\psi(y/t) &=(1+t)\left(\frac{1}{1+t}\psi(x)+\frac{t}{1+t}\psi(y/t)\right)\gr (1+t)\psi\left(\frac{1}{1+t}x+\frac{t}{1+t}\frac{y}{t}\right)\\
&=(1+t)\psi\left(\frac{z}{1+t}\right)
\end{align*}
with equality if $x=y/t=z/(1+t)$. Now, we can write
\begin{align*}
\frac{\int_{\mathbb{R}^n}(f\star (t\cdot f))(x)\,dx-\int_{\mathbb{R}^n}f(x)\,dx}{t} &=
\frac{1}{t}\left((1+t)^n\int_{\mathbb{R}^n}e^{-(1+t)\psi(x)}dx-\int_{\mathbb{R}^n}e^{-\psi(x)}dx\right)\\
&=\frac{(1+t)^n-1}{t}\int_{\mathbb{R}^n}e^{-(1+t)\psi(x)}dx+\int_{\mathbb{R}^n}e^{-\psi(x)}\,\frac{e^{-t\psi(x)}-1}{t}\,dx.
\end{align*}
Since $\lim_{t\to 0^+}\frac{1}{t}((1+t)^n-1)=n$ and $\lim_{t\to 0^+}\frac{e^{-t\psi(x)}-1}{t}=-\psi(x)=\ln f(x)$, applying
the monotone convergence theorem we conclude the proof.
\end{proof}

We are now in position to prove Theorem \ref{thm:gradient-upper}.

\begin{proof}[Proof of Theorem~$\ref{thm:gradient-upper}$] From Lemma~\ref{lem:gamma-ball} we know that if $n\gr 10$ then
$$R_{3n}(\mu)\supseteq  \tfrac{1}{3}B_2^n.$$
By the definition of $R_{3n}(\mu)$ it follows that
\begin{equation}\label{eq:main-10}e^{-3n}f(0)\mathds{1}_{\tfrac{1}{3}B_2^n}(x)\ls f(x), \qquad x\in\mathbb{R}^n.\end{equation}
and hence \eqref{eq:condition} is satisfied with $\alpha =e^{-3n}f(0)$ but with radius $\tfrac13$ instead of 1. Then, Lemma~\ref{lem:col-frag} shows that
\begin{equation}\label{eq:main-11}\lim_{t\to 0^+}\frac{\int_{\mathbb{R}^n}(f\star (t\cdot f))(x)\,dx-\int_{\mathbb{R}^n}f(x)\,dx}{t}=n\int_{\mathbb{R}^n}f(x)\,dx
+\int_{\mathbb{R}^n}f(x)\ln f(x)\,dx.\end{equation}
On the other hand, Lemma~\ref{lem:alonso} shows that 
\begin{equation}\label{eq:main-12}\lim_{t\to 0^+}\frac{(f\star t\cdot (e^{-3n}f(0)\mathds{1}_{\tfrac{1}{3}B_2^n}))(z)-f(z)}{t}=
\frac{1}{3}|\nabla f(z)|+f(z)\ln (e^{-3n}f(0))\end{equation}
almost everywhere. Note that
$$(f\star t\cdot (a\mathds{1}_{rB_2^n}))(z)=\sup\left\{f(x)\cdot \mathds{1}_{rB_2^n}(y/t)a^t:z=x+y\right\}=\sup\left\{f(z-rty)a^t:y\in B_2^n\right\}\gr a^tf(z)$$
for every $a,r>0$. Therefore,
\begin{align*}h_t(z):=\frac{(f\star t\cdot (e^{-3n}f(0)\mathds{1}_{\tfrac{1}{3}B_2^n}))(z)-f(z)}{t}&\gr f(z) \frac{(e^{-3n}f(0))^t-1}{t}\\
&\gr\ln (e^{-3n}f(0))\,f(z).
\end{align*}
This last function is integrable, and hence applying Fatou's lemma to the family of nonnegative functions $\{h_t(z) - \ln(e^{-3n}f(0)) f(z)\}_{t>0}$ and combining \eqref{eq:main-10}--\eqref{eq:main-12} we get
\begin{equation}\label{eq:main-13}
\frac{1}{3}\int_{\mathbb{R}^n}|\nabla f(z)|\,dz+\int_{\mathbb{R}^n}f(z)\ln (e^{-3n}f(0))\,dz\ls n\int_{\mathbb{R}^n}f(x)\,dx+
\int_{\mathbb{R}^n}f(x)\ln f(x)\,dx.\end{equation}
Note that $\int_{\mathbb{R}^n}f(z)\,dz=1$ and $\nabla f(x)=-f(x)\nabla\psi(x)$. Moreover, since $f$ is a centered log-concave function, from Jensen's inequality
we have
\begin{equation}\label{eq:fradelizi-e^n-1}\ln f(0)=\ln f\left(\int_{\mathbb{R}^n} xf(x)dx\right)\gr\int_{\mathbb{R}^n}
f(x)\ln f(x)dx.\end{equation}
Therefore, \eqref{eq:main-13} yields
\begin{align}\label{eq:main-14}\frac{1}{3} \int_{\mathbb{R}^n}|\nabla\psi(x)|\,d\mu(x) &\ls \ln(e^{3n}f(0)^{-1})+n+\int_{\mathbb{R}^n}f(x)\ln f(x)\,dx\\
\nonumber &\ls 4n-\ln f(0)+\int_{\mathbb{R}^n}f(x)\ln f(x)\,dx\ls 4n.\end{align}
This completes the proof of the theorem.
\end{proof}

\begin{remark} \rm
Theorem \ref{thm:gradient-upper} contains a sharp upper bound for the first moment of $\nabla \psi$ with respect to the isotropic log-concave probability measure $\mu$ with density $e^{-\psi}$. This is the best integrability that one can hope for in this setting, even in the one-dimensional case. To see this, fix $p\gr 1$ and consider the even log-concave function $f_p:\R\to\R$ given by
$$
f_p(x)
=
\frac{p}{2}
\frac{\sqrt{\Gamma\!\left(3/p\right)}}{\Gamma\!\left(1/p\right)^{3/2}}
\exp\!\left(
-
\left(\frac{\Gamma\!\left(3/p\right)}{\Gamma\!\left(1/p\right)}\right)^{p/2}
|x|^p
\right)
$$
which satisfies $\int_\R f_p(x)\, dx = 1 = \int_\R x^2 f_p(x) \ dx$ and is thus an isotropic probability density. Moreover,
$$
\int_\R |(-\log f_p)'(x)|^{1+\alpha} f_p(x) \, dx = \int_{\mathbb{R}} \frac{|f_p'(x)|^{1+\alpha}}{f_p(x)^\alpha}\,dx
=
p^{1+\alpha} \, \left(\frac{\Gamma(3/p)}{\Gamma(1/p)} \right)^{\frac{1+\alpha}{2}} \frac{\Gamma\left(\frac{(p-1)(1+\alpha)+1}{p}\right)}{\Gamma(1/p)}
$$
for every $\alpha>0$. Using that $\Gamma(\varepsilon) \sim 1/\varepsilon$ as $\e\to0^+$, we obtain
$$\int_{\mathbb{R}} \frac{|f_p'(x)|^{1+\alpha}}{f_p(x)^\alpha}\,dx \sim p^{1+\alpha} \, \left(\frac{p/3}{p} \right)^{\frac{1+\alpha}{2}} \frac{\Gamma(1+\alpha)}{p} = 3^{-\frac{1+\alpha}{2}} \Gamma(1+\alpha) p^\alpha$$
which is unbounded as $p\to\infty$.
\end{remark}

\begin{remark}\rm A result similar to Theorem \ref{thm:gradient-upper} for log-concave functions in John's position, rather than in isotropic position, 
appears in \cite[Remark~3.3]{GK25}.
\end{remark}

Considering the special case where $\mu$ has a radial density, we see that there exist isotropic log-concave probability measures for
which we have a $O(\sqrt{n})$ bound.

\begin{proposition}\label{prop:radial}
Let $\mu$ be a radial isotropic log-concave probability measure on $\mathbb{R}^n$, where $n\gr2$. Assume that the density $f$ of $\mu$ is of the form
$f=e^{-\psi}$, where $\psi(x)=g(|x|)$ for a continuously differentiable function $g:[0,\infty)\to\mathbb{R}$.
Then,
$$\int_{\mathbb{R}^n}|\nabla\psi(x)|\,d\mu(x) \ls \sqrt{n+1}.$$
\end{proposition}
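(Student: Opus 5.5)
The plan is to reduce everything to a one-dimensional integral in polar coordinates and then apply Cauchy--Schwarz against the second moment. Since $\psi(x)=g(|x|)$, we have $|\nabla\psi(x)|=|g'(|x|)|$ for $x\neq 0$. Convexity of $\psi$ on each line through the origin, together with evenness, forces $g$ to be nondecreasing on $[0,\infty)$. Hence $g'\gr 0$ and $|\nabla\psi(x)|=g'(|x|)$. Passing to polar coordinates,
$$\int_{\mathbb{R}^n}|\nabla\psi|\,d\mu=n\omega_n\int_0^\infty g'(r)e^{-g(r)}r^{n-1}\,dr .$$

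Next we integrate by parts, writing $g'(r)e^{-g(r)}=-\frac{d}{dr}e^{-g(r)}$. The boundary terms vanish: at $r=0$ because of the factor $r^{n-1}$ with $n\gr 2$, and at infinity because of the exponential decay \eqref{eq:A-B}. This leaves
$$n\omega_n(n-1)\int_0^\infty e^{-g(r)}r^{n-2}\,dr=(n-1)\int_{\mathbb{R}^n}\frac{1}{|x|}\,d\mu(x).$$
Thus the claim reduces to the bound $(n-1)\,\E_\mu|x|^{-1}\ls\sqrt{n+1}$.

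For this we write $\E|x|^{-1}=\int r^{n-2}e^{-g}\,/\int r^{n-1}e^{-g}$, with the normalization constants cancelling. We apply Cauchy--Schwarz in the form
$$\Big(\int r^{n-1}e^{-g}\Big)^2\ls \int r^{n-2}e^{-g}\int r^{n}e^{-g}.$$
This only gives a lower bound on $\E|x|^{-1}$, so it points the wrong way. Instead I would use the log-concavity of the radial marginal. Set $I_k=\int_0^\infty r^{k}e^{-g(r)}\,dr$. Because $g$ is convex and nondecreasing, the sequence $k\mapsto I_k/\Gamma(k+1)$ is log-concave. This is the standard Borell-type fact for moments of functions $e^{-g}$ with $g$ convex on $[0,\infty)$, and it yields
$$\frac{I_{n-2}}{(n-2)!}\cdot\frac{I_{n+1}}{(n+1)!}\ls\ldots$$
The more convenient consequence is that $k\mapsto\big(I_k/\Gamma(k+1)\big)^{1/(k+1)}$ is nonincreasing, or equivalently that $\frac{I_{n-2}I_n}{I_{n-1}^2}\ls\frac{(n-2)!\,n!}{((n-1)!)^2}=\frac{n}{n-1}$.

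With that inequality in hand, we get
$$(n-1)\E|x|^{-1}=(n-1)\frac{I_{n-2}}{I_{n-1}}\ls (n-1)\cdot\frac{n}{n-1}\cdot\frac{I_{n-1}}{I_n}=\frac{n}{\E|x|}.$$
It then remains to bound $\E|x|$ from below. Isotropy gives $\E|x|^2=n$. The same log-concavity applied to the consecutive ratios $I_{n-1},I_n,I_{n+1}$ gives
$$\E|x|^2=\frac{I_{n+1}}{I_{n-1}}\ls\frac{n+1}{n}\Big(\frac{I_n}{I_{n-1}}\Big)^2=\frac{n+1}{n}(\E|x|)^2,$$
so $\E|x|\gr\sqrt{n^2/(n+1)}$. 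Combining the two displays gives $(n-1)\E|x|^{-1}\ls\sqrt{n+1}$, exactly the stated constant.

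The main obstacle is justifying the Gamma-normalized log-concavity of moments, namely $I_{k-1}I_{k+1}\ls\frac{k+1}{k}I_k^2$, for $e^{-g}$ with $g$ convex and nondecreasing. The first approach I would try is to compare with the exponential profile: choose $\lambda$ so that $e^{-g}$ and $e^{-\lambda r+c}$ have matching moments $I_{k-1}$ and $I_{k+1}$ against $r^{k}$-type weights. A single-crossing argument then applies, since $g-\lambda r$ is convex and so $e^{-g}-e^{-\lambda r+c}$ changes sign at most twice. This is the classical proof of Borell's lemma, and the rest of the argument is routine.
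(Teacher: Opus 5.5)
Your proof is correct and follows the paper's route: the same polar-coordinate integration by parts reduces the integral to $(n-1)\,n\omega_n I_{n-2}$, and Borell's log-concavity of $p\mapsto I_p/\Gamma(p+1)$ (which the paper cites rather than reproves) finishes it. Your two consecutive inequalities $I_{n-2}I_n\ls\frac{n}{n-1}I_{n-1}^2$ and $I_{n-1}I_{n+1}\ls\frac{n+1}{n}I_n^2$, once you eliminate $I_n$, are exactly the paper's single inequality $\Psi_g(n-1)^3\gr\Psi_g(n-2)^2\Psi_g(n+1)$, and they give the same constant $\sqrt{n+1}$.

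One small correction: your aside that this is ``equivalent'' to monotonicity of $k\mapsto (I_k/\Gamma(k+1))^{1/(k+1)}$ is wrong. That monotonicity is a different statement and fails, for example, for $e^{-g(r)}=ae^{-r}$ with $a<1$. You never use it, so the argument is unaffected.
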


\begin{proof}We shall use the following result of Borell (see \cite[Theorem~2.2.5]{BGVV-book}). If $G=e^{-g}:[0,\infty)\to [0,\infty)$ is a log-concave function, 
then the function
$$\Psi_g(p)=\frac{\int_0^{\infty}r^pe^{-g(r)}\,dr}{\Gamma(p+1)}$$
is log-concave on $[0,\infty)$. 
Note that
$$\int_{\mathbb{R}^n}|x|^qd\mu(x)=n\omega_n\int_0^{\infty}r^{q+n-1}e^{-g(r)}dr = n\omega_n \Gamma(q+n) \Psi_g(q+n-1)$$
for every $q>-(n-1)$. Since $\mu$ is a probability measure we have 
$$n\omega_n\Gamma(n)\Psi_g(n-1)=n\omega_n\int_0^{\infty}r^{n-1}e^{-g(r)}dr=\mu(\mathbb{R}^n)=1,$$ 
and since $\mu$ is isotropic we also have 
$$n\omega_n\Gamma(n+2)\Psi_g(n+1)=n\omega_n\int_0^{\infty}r^{n+1}e^{-g(r)}dr=\int_{\mathbb{R}^n}|x|^2d\mu(x)=n.$$
Recall that $f=e^{-\psi}$ with $\psi(x)=g(|x|)$. Therefore, integration by parts shows that
\begin{align*}
I:=\int_{\mathbb{R}^n}|\nabla \psi(x)|\,e^{-\psi(x)}dx &=n\omega_n\int_0^{\infty}r^{n-1}g^{\prime}(r)e^{-g(r)}dr\\
&=(n-1)\,n\omega_n\int_0^{\infty}r^{n-2}e^{-g(r)}dr\\
&=(n-1)n\omega_n\Gamma(n-1)\Psi_g(n-2).
\end{align*}
Writing $n-1=\frac{2}{3}(n-2)+\frac{1}{3}(n+1)$ and using the log-concavity of $\Psi_g$ we get
$$\Psi_g(n-1)^3\gr \Psi_g(n-2)^2\Psi_g(n+1)$$
or equivalently,
$$\frac{1}{\Gamma(n)^3}\gr\frac{I^2}{(n-1)^2\Gamma(n-1)^2}\,\frac{n}{\Gamma(n+2)}.$$
It follows that
$$I^2\ls \left(\frac{(n-2)!}{(n-1)!}\right)^2\frac{(n+1)!}{(n-1)!}\,\frac{(n-1)^2}{n}
=\frac{1}{(n-1)^2}\,n(n+1)\,\frac{(n-1)^2}{n}=n+1,$$ 
which proves the proposition. \end{proof}

\bigskip 

\noindent \textbf{\S 5.2. Optimal lower bounds.} The next proposition shows that the quantity $\int_{\mathbb{R}^n}|\nabla\psi(x)|\,d\mu(x)$ is always bounded from below by $c\sqrt{n}$.

\begin{proposition}\label{th:root-of-n}
Let $\mu$ be an isotropic log-concave probability measure on $\mathbb{R}^n$ with essentially continuous density $f=e^{-\psi}$. 
Then,
$$\int_{\mathbb{R}^n} |\nabla \psi(x)|\,d\mu(x) \gr c\sqrt{n},$$
where $c>0$ is an absolute constant.
\end{proposition}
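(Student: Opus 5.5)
Integration by parts against the linear function $x$ gives the bound immediately. Since $f$ is essentially continuous, $f\in W^{1,1}(\mathbb{R}^n)$, and $f$ decays exponentially by \eqref{eq:A-B}. For every $i$ we therefore have
$$\int_{\mathbb{R}^n} x_i\,\partial_i f(x)\,dx=-\int_{\mathbb{R}^n} f(x)\,dx=-1.$$
Summing over $i$ and using $\nabla f=-f\nabla\psi$, we get
\begin{equation*}
\int_{\mathbb{R}^n}\langle x,\nabla\psi(x)\rangle\,d\mu(x)=n.
\end{equation*}
Essential continuity is used precisely here: it rules out a singular boundary term on $\partial K_f$. For a uniform measure on a convex body, the identity fails because the whole contribution sits on the boundary.

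Next we apply Cauchy--Schwarz together with the isotropic normalization $\int|x|^2\,d\mu=n$. Writing $|\nabla\psi|=|\nabla\psi|^{1/2}\cdot|\nabla\psi|^{1/2}$ leads to $\int|\nabla\psi|^2\,d\mu$, which is the wrong quantity. The correct route is the pointwise bound $\langle x,\nabla\psi\rangle\ls|x|\,|\nabla\psi|$, which gives
$$n\ls\int|x|\,|\nabla\psi|\,d\mu.$$
The obstacle is that the weight $|x|$ cannot be pulled out of an $L_1$ integral directly. I would handle it by truncation at radius $R=C\sqrt{n}$:
$$n\ls R\int|\nabla\psi|\,d\mu+\int_{\{|x|>R\}}|x|\,|\nabla\psi|\,d\mu .$$

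To control the tail term, I would apply the same integration by parts to a cut-off vector field. Take $x\,\mathds{1}_{\{|x|\ls R\}}$, or more precisely the smooth version $x\,\eta(|x|/R)$ with $0\ls\eta\ls1$, $\eta=1$ on $[0,1]$, $\eta=0$ on $[2,\infty)$, and $|\eta'|\ls 2$. This yields
$$\int\eta\,\langle x,\nabla\psi\rangle\,d\mu=\int\big(n\eta+\eta'(|x|/R)\,|x|/R\big)\,d\mu\gr n\,\mu(|x|\ls R)-4\mu(|x|>R).$$
By Borell's lemma (or thin-shell concentration), $\mu(|x|>R)$ is small for $R=C\sqrt n$, and by Paley--Zygmund or Borell it is at most $1/2$. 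Hence the left side is at least $n/2-2$. Since $\eta$ vanishes outside $2R B_2^n$, the left side is at most
$$2R\int|\nabla\psi|\,d\mu=2C\sqrt n\int|\nabla\psi|\,d\mu .$$
Combining the two estimates gives $\int|\nabla\psi|\,d\mu\gr c\sqrt n$ for $n$ larger than an absolute constant.

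Small $n$ can be handled the same way with $R$ large. Alternatively, the bound follows for all $n\gr1$ directly, since $n\,\mu(|x|\ls R)-4\mu(|x|>R)\gr n/2$ once $\mu(|x|>R)\ls n/(2(n+4))$, and Markov's inequality for $|x|^2$ gives this with $R^2=2(n+4)$.

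The main point needing care is justifying the integration by parts with only $W^{1,1}$ regularity and nonnegative $\langle x,\nabla\psi\rangle$-type integrands. Because $\eta$ is compactly supported and $f\in W^{1,1}$, the identity $\int\operatorname{div}(x\eta)\,f=-\int\langle x\eta,\nabla f\rangle$ is simply the definition of the weak gradient, so this step is routine.
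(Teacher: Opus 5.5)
Your argument is correct, and it takes a genuinely different route from the paper's.

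The paper applies the sharp $L^1$ Sobolev inequality $\int|\nabla f|\,dx\gr n\omega_n^{1/n}\|f\|_{n/(n-1)}$, bounds $\|f\|_{n/(n-1)}$ below by a constant, and uses $n\omega_n^{1/n}\approx\sqrt n$.

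You instead test the weak gradient of $f$ against the smooth, compactly supported field $x\,\eta(|x|/R)$, which is a truncated virial identity. This gives $2R\int|\nabla\psi|\,d\mu\gr n\,\mu(|x|\ls R)-4\mu(|x|>R)$. You then need only Markov's inequality for $|x|^2$ with $R^2=2(n+4)$, which yields $\int|\nabla\psi|\,d\mu\gr n/(4\sqrt{2(n+4)})\gr\sqrt n/(4\sqrt{10})$ for every $n\gr 1$. The integration by parts is indeed just the definition of the weak derivative. The one further ingredient is the a.e. identification $\nabla f=-f\nabla\psi$, which holds because $f$ is locally Lipschitz on ${\rm int}(K_f)$, vanishes off $K_f$, and $\partial K_f$ is Lebesgue-null.

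What your route buys is that it uses neither log-concavity, nor centering, nor any information on the isotropic constant. In fact it proves the $L^1$ uncertainty-type inequality $\int|\nabla f|\,dx\cdot\big(\int|x|^2f(x)\,dx\big)^{1/2}\gr c\,n$ for every $W^{1,1}$ probability density.

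The paper's route rests on the deeper sharp isoperimetric inequality and gives a translation-invariant bound in terms of $\|f\|_{n/(n-1)}$. Turning that into an absolute constant genuinely requires the isotropic log-concave structure. The displayed step $\|f\|_{n/(n-1)}\gr\int f$ holds only up to a constant factor; as written it fails for the standard Gaussian. What Jensen actually gives is $\|f\|_{n/(n-1)}\gr\exp\big(\frac{1}{n}\int f\ln f\big)$. This is bounded below by an absolute constant only after using $\int f\ln f\gr\ln\|f\|_\infty-n$ and $\|f\|_\infty^{1/n}=L_f\gr c$.

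There are two harmless slips in your write-up. First, Paley--Zygmund gives lower bounds on probabilities, so the tool you want for the upper tail is Markov/Chebyshev, as in your last paragraph. Second, the untruncated identity $\int\langle x,\nabla\psi\rangle\,d\mu=n$ would require $|x|\,|\nabla f|\in L^1$, but your final argument never uses it.
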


\begin{proof}
Since $f$ is essentially continuous, it is also $W^{1,1}$ (see \cite[Remark~3.3]{Rotem-2022}) and thus, the classical Sobolev inequality with a sharp constant yields
$$\int_{\mathbb{R}^n} |\nabla \psi(x)|\,d\mu(x) = \int_{\R^n} |\nabla f|\, dx \geq n\omega_n^{1/n} \bigg(\int_{\R^n} f^{\frac{n}{n-1}}\, dx \bigg)^{\frac{n-1}{n}} \geq n\omega_n^{1/n} \int_{\R^n} f(x)\, dx \approx \sqrt{n},$$
where the second inequality follows from Jensen's inequality.
\end{proof}

The next proposition shows the sharpness of Theorem \ref{thm:gradient-upper}.

\begin{proposition}\label{prop:lower-n}There exists an even isotropic log-concave probability measure $\mu$ on $\mathbb{R}^n$ such that
$$\int_{\mathbb{R}^n}|\nabla \psi(x)|\,d\mu(x)\gr cn,$$
where $c>0$ is an absolute constant.
\end{proposition}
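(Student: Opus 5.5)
The construction will reuse the example from the proof of Proposition~\ref{prop:lower-sup}. Take $K$ a $1$-symmetric convex body, specifically a suitable multiple of the cube $B_\infty^n$. Let $\mu=\nu_K$ have density
$$f(x)=e^{-\psi(x)}=\frac{1}{n!\vol_n(K)}e^{-\|x\|_K}.$$
Since the measure is invariant under coordinate permutations and sign changes, it is even, centered, and its covariance is a multiple of the identity. We therefore choose the scaling of $K$ so that the covariance equals $I_n$, which makes $\mu$ isotropic.

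We then invoke the identity \eqref{eq:identity} established in that proof:
$$\frac{1}{\nu_K(tK)}\int_{tK}|\nabla\psi|\,d\nu_K=\frac{S(K)}{n\vol_n(K)}\qquad\text{for every } t>0.$$
Since $\nu_K(tK)\to1$ as $t\to\infty$, monotone convergence gives
$$\int_{\mathbb{R}^n}|\nabla\psi|\,d\mu=\frac{S(K)}{n\vol_n(K)}.$$
One can also see this directly: $|\nabla\psi(x)|=|\nabla\|x\|_K|=1/h_K(n_x)$. Integrating in polar-type coordinates over the level sets $s\partial K$ gives $f(0)S(K)\int_0^\infty s^{n-1}e^{-s}\,ds=f(0)S(K)(n-1)!$. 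Since $f(0)=1/(n!\vol_n(K))$, this equals $S(K)/(n\vol_n(K))$.

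It remains to estimate this ratio. As in the proof of Proposition~\ref{prop:lower-sup}, $L_{\nu_K}=(n!\vol_n(K))^{-1/n}\approx1$, because $c\le L_f\le C$ for isotropic log-concave functions (the upper bound coming from the slicing result; for the cube it can also be checked by hand). Hence $\vol_n(K)^{1/n}\approx 1/n$ and
$$\frac{S(K)}{n\vol_n(K)}\approx\frac{S(K)}{\vol_n(K)^{(n-1)/n}}.$$
This ratio is scale invariant. For $K=B_\infty^n$ it equals $2n\cdot 2^{n-1}/2^{n-1}=2n$, which gives the bound $cn$.

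The only step needing care is the isotropic constant estimate $(n!\vol_n(K))^{1/n}\approx1$. Without invoking the slicing theorem, one can compute directly for $K=aB_\infty^n$. A coordinate's variance under $e^{-\|x\|_\infty/a}$ is comparable to $a^2$ times the second moment of one coordinate of the uniform measure on a cube of radius about $n$ (the radial variable is concentrated near $n$). So the isotropic choice is $a\approx 1/n$, which again gives $\vol_n(K)^{1/n}\approx 1/n$.
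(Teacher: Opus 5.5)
Your proposal is correct and follows essentially the same route as the paper: the same exponential measure $\nu_K$ with $K$ a multiple of the cube, the identity \eqref{eq:identity} with $t\to\infty$, and the estimate $S(K)/(n\vol_n(K))\approx S(K)/\vol_n(K)^{(n-1)/n}\approx n$ via $L_{\nu_K}\approx 1$. This direction only needs the elementary bound $L_{\nu_K}\gr c$, not slicing; and for $K=aB_\infty^n$ with $a=\sqrt{3/((n+1)(n+2))}$ one has $|\nabla\psi|=1/a$ almost everywhere, so the integral equals $\sqrt{(n+1)(n+2)/3}$ exactly.
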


\begin{proof}Let $\nu_K$ be an isotropic measure on $\mathbb{R}^n$ with density $f(x)=e^{-\psi(x)}=\frac{1}{n!\vol_n(K)}e^{-\|x\|_K}$, where $K$ is a $1$-symmetric convex body
and $\|\cdot\|_K$ is the norm induced by $K$. Since $K$ is $1$-symmetric,  $\overline{K}=\vol_n(K)^{-1/n}K$ is isotropic. Recall from the proof of Proposition \ref{prop:lower-sup} that $\vol_n(K)^{1/n}\approx \frac{1}{n}$ and that $R_t(\nu_K)=tK$ for every $t>0$. Applying \eqref{eq:identity} with $t\to\infty$, we get
\begin{align*}
\int_{\mathbb{R}^n}|\nabla \psi(x)|\,d\nu_K(x) 
=\frac{S(K)}{n\vol_n(K)} =\frac{1}{n\vol_n(K)^{\frac{1}{n}}}\,\frac{S(K)}{n\vol_n(K)^{\frac{n-1}{n}}}\approx \frac{S(K)}{\vol_n(K)^{\frac{n-1}{n}}}.
\end{align*}
It remains to observe that
$$\frac{S(K)}{\vol_n(K)^{\frac{n-1}{n}}}=\frac{S(\overline{K})}{\vol_n(\overline{K})^{\frac{n-1}{n}}}\approx n$$
if we choose $K$ so that $\overline{K}=\frac{1}{2}B_{\infty}^n$.
\end{proof}

\section{Reformulations and geometric applications}\label{section:6}

In this section we collect a number of reformulations of the gradient estimate from Theorem~\ref{thm:gradient-upper}. Some of the results are direct applications
of this estimate, once the relevant language has been established.

\bigskip 

\noindent \textbf{\S 6.1. Moment measure of a log-concave function.} Let $f=e^{-\psi}:\mathbb{R}^n\to [0,\infty)$ be 
a pointwise finite log-concave function with finite positive integral. We define the moment measure $\mu_f$ of $f$ to be the Borel measure 
on $\mathbb{R}^n$ which is the push-forward of $fdx$ under $\nabla\psi$. Equivalently,
\begin{equation}\label{eq:defimage}
\int_{\mathbb{R}^n} g(y) \, d\mu_f(y) = \int_{\mathbb{R}^n} g(\nabla \psi(x)) f(x)\, dx
\end{equation}
for every Borel measurable function $g$ such that $g\in L^1(\mu_f)$ or $g$ is non-negative.

The question that was addressed by Cordero-Erausquin and Klartag in \cite{Cordero-Klartag-2015} is to characterize those measures $\nu$ that
are moment measures of log-concave functions $f$ on $\mathbb{R}^n$ with finite positive integral. Under the assumption that $f$ is essentially continuous
(or, equivalently, that $\psi$ is essentially continuous) they showed that the moment measure $\mu_f$ of a log-concave function $f$ as above has the
following two properties: it is centered and it is not supported by a hyperplane. In particular, they showed \mbox{that the first moment of $\mu_f$ is finite: }
\begin{equation}\label{eq:defimage-finite}
\int_{\mathbb{R}^n} |y| \, d\mu_f(y) = \int_{\mathbb{R}^n} |\nabla \psi(x)|\,f(x) \, dx<+\infty.
\end{equation}
It turns out that these necessary conditions are also sufficient. The main result of \cite{Cordero-Klartag-2015} shows that there is a bijection
between essentially-continuous, convex functions $\psi $ modulo translations, and finite measures on $\mathbb{R}^n$ that are centered and not supported by a hyperplane.

\begin{theorem}[Cordero-Erausquin--Klartag]\label{th:Cordero-Klartag}Let $\nu$ be a Borel measure on $\mathbb{R}^n$ such that
\begin{enumerate}
\item[{\rm (i)}] $0 < \nu(\mathbb{R}^n) < +\infty$.
\item[{\rm (ii)}] The measure $\nu$ is not supported by a lower-dimensional subspace.
\item[{\rm (iii)}] The barycenter of $\nu$ lies at the origin; in particular, $\nu$ has finite first moment.
\end{enumerate}
Then, there exists an essentially continuous convex function $\psi: \mathbb{R}^n \rightarrow \mathbb{R}\cup \{ + \infty \}$
such that $\nu$ is the moment measure of the log-concave function $f=e^{-\psi}$.
Moreover, this function $\psi$ is uniquely determined up to translation.
\end{theorem}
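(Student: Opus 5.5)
The plan is to follow the variational approach of Cordero-Erausquin and Klartag. We look for $\psi$ as the Legendre transform $\psi=\mathcal{L}\varphi$ of a maximizer $\varphi$ of the functional
$$\Phi(\varphi)=\ln\int_{\mathbb{R}^n}e^{-\mathcal{L}\varphi(x)}\,dx-\frac{1}{\nu(\mathbb{R}^n)}\int_{\mathbb{R}^n}\varphi\,d\nu .$$
Here $\varphi$ ranges over convex functions that are finite on the interior of the convex hull of ${\rm supp}\,\nu$ and $\nu$-integrable. The normalization $\nu(\mathbb{R}^n)$ is removed at the end by the scaling $\psi\mapsto\psi+c$, which multiplies the moment measure by $e^{-c}$. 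Condition (iii) makes $\Phi$ invariant under $\varphi\mapsto\varphi+\langle\cdot,v\rangle+c$, since such a change only translates $e^{-\mathcal{L}\varphi}$. This is where the translation ambiguity of $\psi$ comes from, and it lets us normalize, for example, $\min\varphi=\varphi(0)=0$.

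\textbf{Step 1 (concavity).} Show that $\Phi$ is concave along linear interpolations $\varphi_\lambda=(1-\lambda)\varphi_0+\lambda\varphi_1$. The linear term is affine in $\lambda$. For the first term, $\mathcal{L}\varphi_\lambda((1-\lambda)x+\lambda y)\le(1-\lambda)\mathcal{L}\varphi_0(x)+\lambda\mathcal{L}\varphi_1(y)$, so the Pr\'ekopa--Leindler inequality gives log-concavity of $\lambda\mapsto\int e^{-\mathcal{L}\varphi_\lambda}$. We will also record the equality cases, since they give uniqueness: equality in Pr\'ekopa--Leindler forces $e^{-\mathcal{L}\varphi_1}$ to be a translate of $e^{-\mathcal{L}\varphi_0}$.

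\textbf{Step 2 (coercivity and existence).} Prove that $\Phi$ is bounded above and that maximizing sequences, after the normalization, are locally uniformly bounded on the interior of the support. This is the step I expect to be the main obstacle, and it is the only place where (ii) and (iii) are used. The idea is as follows. If $\varphi$ is normalized with $\varphi\ge0$, $\varphi(0)=0$ and $\int\varphi\,d\nu=M$, then (ii) and the barycenter condition force $\varphi$ to grow in every direction, roughly like $M$ times a norm, on a neighbourhood of $0$. Hence $\mathcal{L}\varphi\ge-\varphi(0)=0$. Moreover, $\int e^{-\mathcal{L}\varphi}$ is at most of order ${\rm vol}$ of a polar-type body, which grows only polynomially in $M$. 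Therefore $\ln\int e^{-\mathcal{L}\varphi}-M/\nu(\mathbb{R}^n)\to-\infty$ as $M\to\infty$.

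The first thing to try is to reduce to the case where $\nu$ has compact support: truncate, solve the truncated problem, and pass to the limit using uniform first-moment control from (iii). With these bounds, a maximizing sequence of convex functions has a locally uniformly convergent subsequence. Upper semicontinuity of $\Phi$ then follows from Fatou's lemma for the linear term after the normalization, together with continuity of $\int e^{-\mathcal{L}\varphi}$ under locally uniform convergence plus the uniform growth bound.

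\textbf{Step 3 (Euler--Lagrange equation).} For a maximizer $\varphi$ and a bounded continuous $h$, differentiate $t\mapsto\Phi(\varphi+th)$ at $t=0$. Where $\psi=\mathcal{L}\varphi$ is differentiable, the envelope theorem gives $\frac{d}{dt}\mathcal{L}(\varphi+th)(x)=-h(\nabla\psi(x))$, because the supremum is attained at $y=\nabla\psi(x)$. This holds almost everywhere on $\{\psi<\infty\}$. By dominated convergence,
$$\frac{\int h(\nabla\psi)e^{-\psi}\,dx}{\int e^{-\psi}\,dx}=\frac{1}{\nu(\mathbb{R}^n)}\int h\,d\nu .$$
Strictly speaking, $\varphi+th$ need not be convex, so the variation should be done with $\mathcal{L}\mathcal{L}(\varphi+th)$, using that $\mathcal{L}$ of a function equals $\mathcal{L}$ of its convex hull. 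The resulting identity for all such $h$ says that the moment measure of $e^{-\psi}$ is proportional to $\nu$, and a final additive constant fixes the mass.

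\textbf{Step 4 (essential continuity and uniqueness).} Essential continuity of $\psi$ should come from maximality. If $e^{-\psi}$ had a jump on a boundary piece of $\mathcal{H}^{n-1}$-positive measure, a perturbation enlarging the support would strictly increase $\Phi$; in the language of the paper, the boundary term $\nu_f$ in \eqref{eq:gen-coarea} must vanish. Uniqueness up to translation follows from the equality case recorded in Step 1. Two solutions are both maximizers, because a solution satisfies the Euler--Lagrange equation and $\Phi$ is concave. Concavity then makes the whole segment between them maximizing, and equality in Pr\'ekopa--Leindler forces the two densities to be translates of each other.
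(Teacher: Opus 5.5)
The paper gives no proof of this statement; it is quoted from \cite{Cordero-Klartag-2015}. Your scheme is, in outline, the variational argument of that paper, and Steps 1--3 are essentially right as a plan.

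\textbf{Uniqueness.} The genuine gap is the uniqueness part of Step 4. It never uses essential continuity, so it cannot be correct as stated. The implication ``a solution satisfies the Euler--Lagrange equation of Step 3 and $\Phi$ is concave, hence it is a maximizer'' fails. Step 3 only controls derivatives in \emph{bounded} directions $h$, whereas competitors differ from $\mathcal{L}\psi$ by unbounded functions.

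Concretely, let $\nu=e^{-|x|^2/2}\mathds{1}_{B_2^n}(x)\,dx$ and $f_1=e^{-|x|^2/2}\mathds{1}_{B_2^n}$; the moment measure of $f_1$ is $\nu$.
\begin{itemize}
\item Your Step 3 computation never used maximality. So $\varphi_1=\mathcal{L}(-\ln f_1)$, which equals $|y|^2/2$ for $|y|\ls 1$ and $|y|-\frac12$ for $|y|>1$, is critical in every bounded direction.
\item Now add $t(|y|-1)_+$, which vanishes on ${\rm supp}\,\nu$. This leaves $\int\varphi\,d\nu$ unchanged. It leaves $e^{-\mathcal{L}\varphi}$ unchanged on $B_2^n$ and extends it to the shell $1<|x|\ls 1+t$ with values $e^{1/2-|x|}$. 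So $\Phi$ strictly increases.
\item Thus $\varphi_1$ is not a maximizer. As written, your argument would also prove that $f_1$ is a translate of the essentially continuous solution, which is false. This is exactly why uniqueness is claimed only among essentially continuous $\psi$.
\end{itemize}
What is missing is a one-sided first variation in unbounded directions. For essentially continuous $\psi$ with moment measure $\nu$ and every admissible convex $\varphi$, you need
$$\limsup_{\lambda\to0^+}\frac{1}{\lambda}\Big(\ln\int e^{-\mathcal{L}((1-\lambda)\mathcal{L}\psi+\lambda\varphi)}-\ln\int e^{-\psi}\Big)\ls\frac{1}{\nu(\R^n)}\int(\varphi-\mathcal{L}\psi)\,d\nu .$$
This is where essential continuity ($\nu_f=0$ for $f=e^{-\psi}$) must enter; compare the boundary term $\int h_{K_g}\,d\nu_f$ in \eqref{eq:Rotem}. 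Only with this bound does concavity make $\mathcal{L}\psi$ a maximizer. After that, your Pr\'ekopa--Leindler equality argument applies.

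\textbf{Essential continuity of the maximizer.} The existence half of Step 4 points in the right direction but is only heuristic. Enlarging the support also raises $\int\varphi\,d\nu$, so an explicit computation with an unbounded direction is needed.
\begin{itemize}
\item Perturb the maximizer to $\varphi+t|y|$. This is admissible only because $\nu$ has finite first moment, so (iii) is used here too, not only in Step 2.
\item Then $e^{-\mathcal{L}(\varphi+t|\cdot|)}=f\star(t\cdot\mathds{1}_{B_2^n})$. By \eqref{eq:Rotem} with $g=\mathds{1}_{B_2^n}$, or by an elementary lower bound, the right derivative of $\Phi$ at $t=0$ is at least
$$\frac{\int|y|\,d\mu_f+\nu_f(S^{n-1})}{\int f}-\frac{\int|y|\,d\nu}{\nu(\R^n)}.$$
\item Extend the Step 3 identity $\mu_f=\frac{\int f}{\nu(\R^n)}\,\nu$ to $h=|y|$ by monotone convergence. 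This cancels the two moment terms, so maximality forces $\nu_f(S^{n-1})=0$.
\end{itemize}

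\textbf{A smaller correction to Step 2.} Conditions (ii)--(iii) do not make $\varphi$ grow. What they give is this: if $\varphi\gr 0=\varphi(0)$, then $\langle x,y\rangle_+\ls\varphi(y)+\mathcal{L}\varphi(x)$. Integrating in $d\nu(y)/\nu(\R^n)$ yields $\mathcal{L}\varphi(x)\gr c_\nu|x|-\frac{M}{\nu(\R^n)}$, where
$$c_\nu=\min_{\theta\in S^{n-1}}\frac{1}{\nu(\R^n)}\int\langle y,\theta\rangle_+\,d\nu(y)>0.$$
This gives both the polynomial bound in $M$ and the integrable majorant needed for upper semicontinuity. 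Local boundedness of maximizing sequences on ${\rm int}\,\conv({\rm supp}\,\nu)$ then follows from Markov's inequality and convexity. No truncation of $\nu$ is needed.
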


Assuming that $f$ is an isotropic log-concave function on $\mathbb{R}^n$, Theorem~\ref{thm:gradient-upper} immediately implies a quantitative version
of \eqref{eq:defimage-finite}.

\begin{theorem}\label{th:moment-mu-f}Let $\mu_f$ be the moment measure of an isotropic log-concave function $f$ on $\mathbb{R}^n$.
Then,
$$\int_{\mathbb{R}^n} |y| \, d\mu_f(y)\ls Cn$$
for some absolute constant $C>0$.
\end{theorem}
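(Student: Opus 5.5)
The proof will be a direct unwinding of the definition of the moment measure followed by an application of Theorem~\ref{thm:gradient-upper}. I would take $g(y)=|y|$ in \eqref{eq:defimage}. This $g$ is a nonnegative Borel function, so \eqref{eq:defimage} applies without any integrability assumption and gives
$$\int_{\mathbb{R}^n}|y|\,d\mu_f(y)=\int_{\mathbb{R}^n}|\nabla\psi(x)|\,f(x)\,dx.$$
Since $f$ is isotropic, it is the density of an isotropic log-concave probability measure $\mu$. The right-hand side is then exactly $\int|\nabla\psi|\,d\mu$, and Theorem~\ref{thm:gradient-upper} bounds it by $Cn$.

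The only point needing care is that $\mu_f$ is well defined, i.e.\ that $\nabla\psi$ exists $f\,dx$-almost everywhere. I would handle this in three steps.

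\begin{enumerate}
\item $\psi$ is convex and finite on the interior of the support $K_f$, so it is differentiable almost everywhere there by Rademacher's (or Alexandrov's) theorem, as already used in the proof of Lemma~\ref{lem:alonso}.
\item The boundary $\partial K_f$ of the convex support has Lebesgue measure zero.
\item Outside $K_f$ the density vanishes, so that region contributes nothing.
\end{enumerate}

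Hence the push-forward of $f\,dx$ under $\nabla\psi$ is defined, and the identity above holds, with both sides possibly $+\infty$ a priori.

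The pointwise-finiteness hypothesis in the definition of $\mu_f$ plays no role in this estimate. If one prefers not to restrict to pointwise finite $f$, the same computation goes through, because the gradient is still defined a.e.\ on $\{f>0\}$.

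The main potential obstacle is checking that Theorem~\ref{thm:gradient-upper} is stated in the generality needed: arbitrary isotropic log-concave $f$, not necessarily smooth or essentially continuous, and in every dimension. In the paper's proof, the restriction $n\ge 10$ comes from Lemma~\ref{lem:gamma-ball}. For $n<10$ I would either note that Theorem~\ref{thm:gradient-upper} is asserted for all $n$, or argue directly that for fixed small $n$ an analogous inclusion $R_{3n}(\mu)\supseteq cB_2^n$ holds with some absolute $c>0$. Rerunning the argument of Section~\ref{section:5} with radius $c$ then yields a bound $C'$ that can be absorbed into $Cn$.
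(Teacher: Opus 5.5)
Your proof is correct and is the paper's argument: the paper states this result as an immediate consequence of Theorem~\ref{thm:gradient-upper}, obtained by taking $g(y)=|y|$ in \eqref{eq:defimage}, so that $\int|y|\,d\mu_f(y)=\int|\nabla\psi|\,f\,dx\ls Cn$. Your additional remarks are sound extras that the paper does not spell out: one concerns the almost-everywhere existence of $\nabla\psi$, and the other concerns the case $n<10$, where the paper's proof of Theorem~\ref{thm:gradient-upper} relies on Lemma~\ref{lem:gamma-ball}.
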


\bigskip 

\noindent \textbf{\S 6.2. Surface area measures of a log-concave function.} Recall that for every convex body $K$ in $\mathbb{R}^n$ there exists 
a Borel measure $S_K$ on $S^{n-1}$, the surface area measure of $K$, such that
\begin{equation}\label{eq:surface-area-measure}\lim_{t\to 0^+}\frac{\vol_n(K+tL)-\vol_n(K)}{t}=\int_{S^{n-1}}h_L(\xi)\,dS_K(\xi)\end{equation}
for every convex body $L$ in $\mathbb{R}^n$. Choosing $L=B_2^n$ we see that $S_K(S^{n-1})$ equals the surface area of $K$.

A natural question in ``functional convexity" is to extend the notion of the surface area measure of a convex body to the setting
of functions in ${\rm LC}_n$. For any log-concave function $f:\mathbb{R}^{n}\to\mathbb{R}$ with $0<\int f<\infty$, Rotem considered 
in \cite{Rotem-2022} and \cite{Rotem-2023} two surface area measures of $f$. Write $f=e^{-\psi}$, where $\psi:\mathbb{R}^{n}\to(-\infty,\infty]$ 
is a convex function. Then, the first surface area measure $\mu_{f}$ of $f$ is a measure on $\mathbb{R}^{n}$, defined as the push-forward
$$\mu_{f}=(\nabla\psi)_{\ast}(fdx).$$
Note that, in the terminology of \S 6.1, $\mu_f$ is precisely the moment measure of $f$. The second surface area measure of $f$ is a measure $\nu_{f}$
on the sphere $S^{n-1}$, defined as the push-forward 
$$\nu_{f}=(n_{K_{f}})_{\ast}(fd\mathcal{H}^{n-1}|_{\partial K_{f}}),$$
where $K_{f}=\overline{\{ x:\ f(x)>0\}}$ is the support of $f$, and $n_{K_{f}}$ denotes the Gauss map $n_{K_{f}}:\partial K_{f}\to S^{n-1}$. 

For example, if $f(x)=e^{-|x|^{2}/2}$ then $\mu_{f}=e^{-|x|^{2}/2}dx$ and $\nu_{f}\equiv 0$, because $\partial K_{f}=\partial\mathbb{R}^{n}=\varnothing$.
On the other hand, if $f=\mathds{1}_K$ for some convex body $K$ in $\mathbb{R}^n$ then $\mu_{\mathds{1}_{K}}=\vol_n(K)\,\delta_{0}$ 
and $\nu_{\mathds{1}_{K}}=S_{K}$, the usual surface area measure. The pair $(\mu_{f},\nu_{f})$ is the pair of
surface area measures of $f$. 

It is useful to observe that we do not assume any regularity for the definitions of $\mu_{f}$ and $\nu_{f}$. Since $\psi=-\ln f$
is a convex function, it is differentiable almost everywhere with respect to Lebesgue measure on the set $K_f=\overline{\{ x:\ \psi(x)<\infty\}}$. 
Therefore, the push-forward $(\nabla\psi)_{\ast}(fdx)$ is well-defined. Similarly, since $K_{f}$ is a closed convex set, its
boundary $\partial K_{f}$ is a Lipschitz manifold, and hence the Gauss map $n_{K_{f}}$ is defined $\mathcal{H}^{n-1}$-almost everywhere
and the push-forward is again well-defined. 

Motivated by \eqref{eq:surface-area-measure}, for any $f,g\in{\rm LC}_{n}$ one may consider the first variation of the integral of $f$
in the direction of $g$, defined by 
\begin{equation}\delta(f,g)=\lim_{t\to0^{+}}\frac{\int f\star\left(t\cdot g\right)-\int f}{t}.\label{eq:variation-def}
\end{equation}
Under some additional regularity assumptions, $\delta(f,g)$ was studied by Colesanti and Fragal\`{a} in \cite{Colesanti-Fragala-2013}.
The same first variation appears in the work of Cordero-Erausquin and Klartag \cite{Cordero-Klartag-2015} on moment measures that we
briefly discussed in \S 6.1, as a step in their proof of the Minkowski-type existence Theorem~\ref{th:Cordero-Klartag}. Rotem dropped
the additional hypotheses in \cite{Rotem-2022} and proved that if $f=e^{-\psi},g=e^{-\varphi}\in {\rm LC}_n$, $0<\int f<\infty$ and $\nu_{f}=0$ then
\begin{equation}\label{eq:delta}\delta(f,g)=\int_{\mathbb{R}^{n}}h_{g}d\mu_{f}\end{equation}
with no regularity assumptions, where
$$h_g=\mathcal{L}\varphi$$
is the Legendre transform of $\varphi$, which is the functional analogue of the support function of $g$. Note that since $f$ is log-concave
and upper semi-continuous, it is only discontinuous at points $x\in\partial K_{f}$
such that $f(x)\ne0$. Therefore the condition $\nu_{f}=0$ is equivalent
to the statement that $f$ is essentially continuous. In his subsequent work \cite{Rotem-2023}, Rotem obtained a very general version of \eqref{eq:delta}.
 
\begin{theorem}[Rotem]\label{th:Rotem}Let $f,g\in {\rm LC}_{n}$ such that $0<\int f<\infty$. Then, 
\begin{equation}
\delta(f,g)=\int_{\mathbb{R}^{n}}h_{g}d\mu_{f}+\int_{S^{n-1}}h_{K_{g}}d\nu_{f}.\label{eq:Rotem}
\end{equation}
\end{theorem}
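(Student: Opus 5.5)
The plan is to pass to Legendre transforms, where the Asplund product linearizes, and then to split the first variation into an interior contribution, which produces $\mu_f$, and a boundary-layer contribution, which produces $\nu_f$.

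\textbf{Step 1: Legendre picture and the interior term.} Write $f=e^{-\psi}$ and $g=e^{-\varphi}$. The dilation $t\cdot g$ has potential $t\varphi(\cdot/t)$, whose Legendre transform is $t\,\mathcal{L}\varphi=t\,h_g$. Since inf-convolution corresponds to addition of Legendre transforms,
$$f\star(t\cdot g)=\exp\big(-\mathcal{L}(\mathcal{L}\psi+t\,h_g)\big),$$
at least after replacing $\psi$ by its lower semi-continuous regularization, which changes nothing almost everywhere. Fix a point $z\in{\rm int}(K_f)$ where $\psi$ is differentiable. Then $\mathcal{L}\psi$ attains the supremum defining $\mathcal{L}(\mathcal{L}\psi)(z)$ uniquely at $y=\nabla\psi(z)$. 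A Danskin/envelope argument then gives
$$\frac{d}{dt}\Big|_{t=0^+}\mathcal{L}(\mathcal{L}\psi+t h_g)(z)=-h_g(\nabla\psi(z)),$$
exactly as in the proof of Lemma~\ref{lem:alonso} (which is the special case $g=a\mathds{1}_{rB_2^n}$). Hence the pointwise derivative of $f\star(t\cdot g)$ at $z$ is $f(z)\,h_g(\nabla\psi(z))$. Integrating over ${\rm int}(K_f)$ then yields $\int h_g\,d\mu_f$ by the definition of $\mu_f$ as a push-forward.

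\textbf{Step 2: the boundary layer.} Outside $K_f$ the support of $f\star(t\cdot g)$ is $K_f+tK_g$. On the shell $(K_f+tK_g)\setminus K_f$ the function takes values close to $f$ at the nearby boundary point. By the definition of the surface area measure, this shell has volume $t\int_{S^{n-1}}h_{K_g}\,dS_{K_f}+o(t)$. More precisely, I would parametrize the shell by $x+s\,n_{K_f}(x)$ with $x\in\partial K_f$, and use upper semi-continuity of $f$ together with the fact that $\sup_{K_g}$-type translates achieve $f(x)g(y)^t\to f(x)$ for $y\in K_g$. This gives a contribution of
$$t\int_{\partial K_f}f(x)\,h_{K_g}(n_{K_f}(x))\,d\mathcal{H}^{n-1}(x)+o(t)=t\int_{S^{n-1}}h_{K_g}\,d\nu_f+o(t).$$
Only the points where $f$ is discontinuous, that is where $f|_{\partial K_f}\neq 0$, contribute, which is consistent with $\nu_f$ being carried by $f\,d\mathcal{H}^{n-1}|_{\partial K_f}$.

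\textbf{Step 3: justifying limits.} This is the main obstacle. It requires controlling the difference quotients so that one can interchange limit and integral, and handling the fact that $h_g$ may be unbounded or even take the value $+\infty$. I would first treat $g$ with compact support, bounded by $1$ and with $g(0)>0$, for which $h_g$ is finite, Lipschitz and bounded below. In that case $t\mapsto\int f\star(t\cdot g)$ is controlled: one gets lower bounds from Fatou's lemma, as in the proof of Theorem~\ref{thm:gradient-upper}, and upper bounds from convexity in $t$ of $\mathcal{L}\psi+th_g$ together with dominated convergence, using the integrability of $|\nabla\psi|f$ and the exponential decay \eqref{eq:A-B}. The general case would then follow by monotone approximation of $g$ (truncating $\varphi$ by $\varphi+\mathds{1}_{RB_2^n}$-type penalties and letting $R\to\infty$), using that both sides are monotone in $g$. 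For the boundary term, I expect the hardest point to be the non-smooth boundary of $K_f$. I would approximate $K_f$ from inside by level sets and use that the Gauss map is defined $\mathcal{H}^{n-1}$-almost everywhere.
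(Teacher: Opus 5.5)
The paper does not prove this statement. It quotes it from Rotem \cite{Rotem-2023}, and the whole content of that result is the removal of the regularity assumptions under which the formula was previously known (\cite{Colesanti-Fragala-2013,Cordero-Klartag-2015}).

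Your Step~1 is that regular-case computation, and it is correct pointwise. At a differentiability point $z\in{\rm int}(K_f)$ the subgradient inequality gives $(f\star(t\cdot g))(z)\le f(z)e^{t\,h_g(\nabla\psi(z))}$ for all $t>0$, and testing a single $w$ gives the matching lower bound.

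The genuine gap is Step~3, which is exactly where the theorem lives.

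\begin{itemize}
\item \textbf{Domination fails.} The majorant you get from convexity in $t$ is $f(z)\,\frac{e^{t h_g(\nabla\psi(z))}-1}{t}$. For $K_g\subseteq RB_2^n$ this is of size $f\,|\nabla\psi|\,e^{tR|\nabla\psi|}$, not $f|\nabla\psi|$. Near $\partial K_f$, where $|\nabla\psi|$ may blow up, it can be non-integrable for every $t>0$ even though $f|\nabla\psi|\in L^1$. For example, take $n=1$, $f=e^{\sqrt{x}}\mathds{1}_{[0,1]}$ and $g=\mathds{1}_{[-1,1]}$. Then $h_g(y)=|y|$, $|\psi'(x)|=1/(2\sqrt{x})$, and $\int_0^1 e^{t/(2\sqrt{x})}\,dx=\infty$.
\item \textbf{No monotonicity to fall back on.} The quotients $(f\star(t\cdot g)-f)/t$ are not monotone in $t$ in general; only $\frac1t\ln\frac{f\star(t\cdot g)}{f}$ is. So Fatou gives you $\liminf\ge$, but nothing in your plan gives $\limsup\le$.
\item \textbf{Boundary layer.} The same problem appears there. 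A parametrization $x+s\,n_{K_f}(x)$ is not available for nonsmooth $\partial K_f$, and $f|_{\partial K_f}$ is only upper semicontinuous.
\end{itemize}

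The missing idea is to extract monotonicity from superlevel sets. For $g=\mathds{1}_L$ with $L$ compact, $\{f\star(t\cdot\mathds{1}_L)\ge s\}=\{f\ge s\}+tL$, hence
\[
\frac{1}{t}\Big(\int f\star(t\cdot\mathds{1}_L)-\int f\Big)=\int_0^\infty\frac{\vol_n(\{f\ge s\}+tL)-\vol_n(\{f\ge s\})}{t}\,ds .
\]
The integrand is nondecreasing in $t$ (a Steiner polynomial with nonnegative mixed-volume coefficients), so monotone convergence yields $\int_0^\infty\int_{S^{n-1}}h_L\,dS_{\{f\ge s\}}\,ds$. An anisotropic version of the co-area formula \eqref{eq:gen-coarea} then identifies this with $\int h_L\,d\mu_f+\int h_L\,d\nu_f$, handling interior and boundary at once. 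This is presumably where the anisotropic total variation of \cite{Rotem-2023} (the cited proof, which I have not reconstructed) enters. A compactly supported $g\le 1$ with $g(0)>0$ can then be sandwiched,
\[
g(0)^t f\le f\star(t\cdot g)\le f\star(t\cdot\mathds{1}_{K_g}),
\]
and handled by a generalized dominated convergence argument.

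Two further points.

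First, truncating $g_R\nearrow g$ and using monotonicity in $g$ gives only $\liminf_{t\to0^+}\ge$ (right-hand side). The reverse inequality requires interchanging $\lim_R$ and $\lim_t$, which monotonicity alone does not give. You can supply it as follows. The map $(t,z)\mapsto\inf_y\{\psi(z-y)+t\varphi(y/t)\}$ is jointly convex on $[0,\infty)\times\mathbb{R}^n$, via the perspective of $\varphi$. Hence, by Pr\'ekopa's theorem on marginals, $t\mapsto\int f\star(t\cdot g)$ is log-concave. The right derivative of its logarithm at $0$ is then $\sup_{t>0}$ of the difference quotients, and so it passes to increasing limits in $g$. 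This step has to be stated.

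Second, the reduction to $g(0)>0$ is a translation $g(\cdot-a)$. This changes the right-hand side by $\langle a,\int y\,d\mu_f(y)+\int\theta\,d\nu_f(\theta)\rangle$, so you must prove that this vector vanishes (Gauss--Green for $f$ as a $BV$ function). This matters, because without $0\in K_g$ your split in Steps~1--2 is wrong. Take $f=\mathds{1}_K$ and $g=\mathds{1}_L$ with $0\notin L$. Every interior point has pointwise derivative $0=h_g(0)$, yet $\frac1t\int_K(f\star(t\cdot g)-f)=-\frac1t\vol_n(K\setminus(K+tL))$ does not tend to $0$.
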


Rotem proved in \cite[Proposition~1.6]{Rotem-2023} that $\nu_f(S^{n-1})<+\infty$. Assuming that $f$ is an isotropic log-concave function on $\mathbb{R}^n$, 
we shall complement the estimate of Theorem~\ref{th:moment-mu-f} by providing a similar estimate for $\nu_f(S^{n-1})$. More precisely, we will show that
$$\int_{\R^n} |y|\, d\mu_f(y) + \nu_f(S^{n-1}) \leq Cn .$$
This is the content of Theorem \ref{th:nu-f} and it is again a consequence of Theorem~\ref{thm:gradient-upper}.

\begin{proof}[Proof of Theorem~$\ref{th:nu-f}$]
Write $f=e^{-\psi}$ for a proper lower semi-continuous convex function $\psi:\R^n\to(-\infty,\infty]$. For $\lambda>0$,
let $\psi_\lambda:\R^n\to\R$ be the infimum convolution of $\psi$ with the function $\frac{1}{2\lambda}|x|^2$, namely
$$\psi_\lambda(x) = \inf_{y\in\R^n} \Big\{ \psi(y) + \frac{1}{2\lambda} |x-y|^2\Big\}.$$
The function  $\psi_\lambda$ is also referred to as the Moreau envelope of $\psi$, see \cite[Definition~1.22]{RW98}. Notice that $\psi_\lambda$ is always finite and $\psi_\lambda(x) \ls \psi(x)$ at points $x$ where $\psi(x)<\infty$. Observe further that the function
$$\Theta_\lambda (x,y) = \psi(y) + \frac{1}{2\lambda} |x-y|^2$$
is jointly convex in $(x,y)$ and thus $\psi_\lambda = \inf_y \Theta_\lambda(\cdot,y)$ is also convex. Moreover, it is a classical fact (see, e.g., \cite[Theorem~2.26 (b)]{RW98}) that $\psi_\lambda$ is of class $C^1$ on $\R^n$.

Recall that a sequence of convex lower semi-continuous functions $\phi_n:\R^n\to(-\infty,\infty]$
epi-converges to $\phi:\R^n\to(-\infty,\infty]$ if for every $x\in\R^n$ the following conditions are met:

\smallskip

\noindent (i) for every sequence $x_n\to x$, we have
$$\liminf_{n\to\infty} \phi_n(x_n) \gr \phi(x)$$
\noindent (ii) there exists a sequence $x_n\to x$ for which
$$\limsup_{n\to\infty} \phi_n(x_n) \ls \phi(x).$$

Given a sequence $\lambda_n \searrow 0$ as $n\to\infty$ we will show that the sequence of functions $\{\psi_{\lambda_n}\}_{n\gr1}$ epi-converges to $\psi$. It is well known that $\psi_{\lambda_n}$ converges pointwise to $\psi$ as $n\to\infty$ (see, e.g., \cite[Theorem~1.25]{RW98}) and thus property (ii) holds trivially for $x_n=x$. To verify property (i), fix some $x\in\R^n$ and a sequence $x_n\to x$. By the Fenchel--Moreau theorem, since $\psi$ is proper, lower semi-continuous and convex, we have 
\begin{equation} \label{eq:fm}
\psi(x) = \sup \big\{ \ell(x): \ \ell \ \mbox{is an affine function with } \ell \ls \psi\big\}
\end{equation}
for every $x\in\R^n$. Therefore, for such an affine function $\ell(y) = \langle y,a\rangle + b$, we also have
\begin{equation*}
\begin{split}
\psi_{\lambda_n}(x_n) \gr \inf_{y\in\R^n} \Big\{ \ell(y) + \frac{1}{2\lambda_n} |x_n-y|^2 \Big\} = \inf_{y\in\R^n} \Big\{ \langle y,a\rangle+b + \frac{1}{2\lambda_n} |x_n-y|^2 \Big\}
= \langle x_n,a\rangle + b - \frac{\lambda_n}{2} |a|^2
\end{split}
\end{equation*}
as the infimum is attained at $y=x_n - \lambda_n a$. Therefore,
$$\liminf_{n\to\infty} \psi_{\lambda_n}(x_n) \gr \langle x,a\rangle+b = \ell(x)$$
and condition (i) follows from equation \eqref{eq:fm}. This establishes that $\psi_\lambda$ epi-converges to $\psi$ as $\lambda\to0^+$.

Now consider $f_\lambda = e^{-\psi_\lambda}:\R^n \to(0,\infty)$ and observe that $f_\lambda$ is a $C^1$ log-concave function. Since $f$ is integrable, there exist constants $c_1, c_2 >0$ such that
$$\psi(x) \gr c_1 |x| - c_1$$
for every $x\in\R^n$. Therefore, we also have
\begin{equation*}
\begin{split}
\psi_\lambda(x)  \gr \inf_{y\in\R^n} \Big\{ c_1 |y| -c_2 + \frac{1}{2\lambda} |x-y|^2 \Big\} & \gr \inf_{y\in\R^n} \Big\{ c_1|y|-c_2 + \frac{1}{2\lambda}\big( |x|^2 - 2 |x||y| + |y|^2 \big)\Big\} \\ 
& = \inf_{t\gr0} \Big\{ \frac{1}{2\lambda}|x|^2 - c_2 + \Big(c_1-\frac{|x|}{\lambda}\Big)t + \frac{t^2}{2\lambda}\Big\}.
\end{split}
\end{equation*}
The infimum of the latter expression is attained at $t_\ast = \lambda c_1-|x|$. If $t_\ast\gr0$, this gives
$$\psi_\lambda(x) \gr c_1|x| - c_2 - \frac{\lambda c_1^2}{2}$$
whereas if $t_\ast<0$ then plugging $t=0$ we get
$$\psi_\lambda(x) \gr \frac{1}{2\lambda}|x|^2 - c_2 \gr c_1|x| - c_2 -\frac{\lambda c_1^2}{2},$$
since the second inequality holds for every $x\in \R^n$. This way we conclude that
$$f_\lambda(x) \ls \exp\big(-c_1|x|+c_2+\tfrac{\lambda c_1^2}{2}\big)$$
for every $x\in \R^n$. In particular, the integrands of the parameters
$$Z_\lambda = \int_{\R^n} f_\lambda(x)\, dx , \ \ \ m_\lambda = \int_{\R^n} x \, f_\lambda(x)\, dx \ \ \ \mbox{and} \ \ \ \Sigma_\lambda = \int_{\R^n} \big(x - m_\lambda) \otimes (x-m_\lambda)\, f_\lambda(x)\, dx$$
are all bounded above by uniformly integrable functions when $\lambda\in(0,1)$. Since $f_\lambda$ converges pointwise to $f$, the dominated convergence theorem yields
$$Z_\lambda \longrightarrow \int_{\R^n} f(x)\, dx = 1, \ \ \ m_\lambda \longrightarrow \int_{\R^n} x\, f(x)\, dx = \operatorname{bar}(f) = 0$$
and
$$\Sigma_\lambda \longrightarrow \int_{\R^n} x\otimes x \, f(x)\, dx = \operatorname{Cov}(f)= I_n$$
as $\lambda\to0^+$. In other words, the renormalized $C^1$ log-concave functions
$$F_\lambda(x) = \frac{|\det \Sigma_\lambda|^{1/2}}{Z_\lambda} f_\lambda\big(\Sigma_\lambda^{1/2}x+m_\lambda\big)$$
are isotropic and converge to $f$ in the topology of epi-convergence of their convex potentials.

Write $F_\lambda = e^{-\Psi_\lambda}$ for some $C^1$ convex function $\Psi_\lambda:\R^n\to\R$. Invoking a recent result of Falah and Rotem \cite[Theorem~1.9]{FR26}, we deduce that the pair of surface area measures $(\mu_{F_\lambda}, \nu_{F_\lambda})$ of $F_\lambda$ converges cosmically to $(\mu_f, \nu_f)$ as $\lambda\to0^+$, that is, for every continuous function $\xi:\R^n\to\R$ for which the limit
$$\overline{\xi}(\theta) = \lim_{\lambda\to\infty} \frac{\xi(\lambda \theta)}{\lambda}$$
exists (in the finite sense) uniformly in $\theta\in S^{n-1}$, we have
\begin{equation} \label{eq:universe}
\int_{\R^n} \xi(y) \, d\mu_f(y) + \int_{S^{n-1}}\overline{\xi}(\theta)\, d\nu_f(\theta) = \lim_{\lambda\to 0^+} \int_{\R^n} \xi(y) \, d\mu_{F_\lambda}(y) + \int_{S^{n-1}}\overline{\xi}(\theta)\, d\nu_{F_\lambda}(\theta).
\end{equation}
However, since $F_\lambda$ are $C^1$ and positive on $\R^n$, they are in particular essentially continuous. Thus $\nu_{F_\lambda}=0$ and $\mu_{F_\lambda}$ is the moment measure of $F_\lambda$. Plugging $\xi(y)=|y|$ in \eqref{eq:universe}, for which $\overline{\xi}(\theta)\equiv1$, we finally conclude that
$$\int_{\R^n} |y| \, d\mu_f(y) + \nu_f(S^{n-1}) = \lim_{\lambda\to0^+} \int_{\R^n} |y| \, d\mu_{F_\lambda}(y) \ls Cn$$
by Theorem \ref{th:moment-mu-f} as the functions $F_\lambda$ are isotropic log-concave.
\end{proof}

Combining Theorem~\ref{th:nu-f} with the generalized co-area formula \eqref{eq:gen-coarea} of \cite[Theorem~3.2]{Rotem-2022}, we derive the following general statement about the perimeter of super-level sets of isotropic log-concave functions.

\begin{corollary}\label{cor:H}
There exists a universal constant $C>0$ such that for every isotropic log-concave function $f$ on $\mathbb{R}^n$,
\begin{equation}
\int_0^\infty \mathcal{H}^{n-1}(\partial \{x: \ f(x)\gr t\}) \, dt \ls Cn. \end{equation}
\end{corollary}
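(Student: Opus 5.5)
The plan is to read the generalized co-area formula \eqref{eq:gen-coarea} from right to left and then recognise each of its two terms as one part of the left-hand side of Theorem~\ref{th:nu-f}. Let $f=e^{-\psi}$ be isotropic. Then $f\in{\rm LC}_n$ with $\int f=1$, so \eqref{eq:gen-coarea} applies and gives
$$\int_0^\infty \mathcal{H}^{n-1}(\partial\{x:f(x)\gr t\})\,dt=\int_{\mathbb{R}^n}|\nabla f(x)|\,dx+\int_{\partial K_f}f(x)\,d\mathcal{H}^{n-1}(x).$$
(If $f$ is not upper semi-continuous, we first replace it by its upper semi-continuous regularization. This changes $f$ only on the Lebesgue-null set $\partial K_f$, so isotropicity and all the moments are unaffected.)

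Step 1 (gradient term). On the interior of $K_f$ we have $\nabla f=-f\nabla\psi$ almost everywhere. Outside $K_f$ we have $\nabla f=0$, and $\partial K_f$ is Lebesgue-null. By the definition \eqref{eq:defimage} of the moment measure, the first term therefore equals
$$\int_{\mathbb{R}^n}|\nabla\psi|\,f\,dx=\int_{\mathbb{R}^n}|y|\,d\mu_f(y).$$

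Step 2 (boundary term). Since $\nu_f=(n_{K_f})_\ast(f\,d\mathcal{H}^{n-1}|_{\partial K_f})$, taking total mass gives
$$\nu_f(S^{n-1})=\int_{\partial K_f}f\,d\mathcal{H}^{n-1}.$$
This uses only that the Gauss map is defined $\mathcal{H}^{n-1}$-almost everywhere on $\partial K_f$. If $K_f=\mathbb{R}^n$, the boundary term and $\nu_f$ both vanish.

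Step 3. Adding Steps 1 and 2, the left-hand side of the corollary equals $\int|y|\,d\mu_f+\nu_f(S^{n-1})$. Theorem~\ref{th:nu-f} bounds this by $Cn$, which completes the proof.

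The only delicate point is making sure \eqref{eq:gen-coarea} is applied in exactly the form stated, with $\partial\{f\gr t\}$ and not the level sets $\{f=t\}$, and to an upper semi-continuous representative. This is handled by the regularization remark above; the rest is bookkeeping between the definitions.
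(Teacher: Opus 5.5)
Your proposal is correct and matches the paper's argument: the paper obtains the corollary in one line by combining Theorem~\ref{th:nu-f} with the generalized co-area formula \eqref{eq:gen-coarea}, identifying $\int|\nabla f|\,dx=\int|y|\,d\mu_f$ and $\int_{\partial K_f}f\,d\mathcal{H}^{n-1}=\nu_f(S^{n-1})$, exactly as in your Steps 1--3. The paper tacitly takes $f\in{\rm LC}_n$ (upper semi-continuous), so your regularization remark goes beyond it; if you keep that remark, you should also check that the left-hand side of the corollary is unchanged by the regularization. This holds because, for $0<t<\sup f$, the super-level sets of $f$ and of its regularization have the same closure, and a convex set has the same boundary as its closure.
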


\bigskip

\noindent \textbf{\S 6.3. Projections of a log-concave function.} Let $f:{\mathbb R}^n\to [0,\infty)$ be a log-concave function. 
Given $E\in G_{n,k}$, where $G_{n,k}$ is the Grassmann manifold of $k$-dimensional subspaces of ${\mathbb R}^n$, the ``section" of 
$f$ with $E$ is the restriction $f\big|_E$ of $f$ onto $E$ and the ``projection" or ``shadow" of $f$ onto $E$ is the function
$$P_Ef(x):=\max\{f(x+y):y\in E^{\perp}\},\quad x\in E$$
where $E^{\perp}$ is the orthogonal subspace of $E$. For every log-concave function $g$ and any $t>0$ we define $\tilde{R}_t(g)=\{x:g(x)\gr t\}$, $t>0$. Note that $R_s(g)=\tilde{R}_{e^{-s}g(0)}(g)$ for every $s>0$. It is not hard to check that
\begin{equation}\label{eq:RP=PR}\tilde{R}_t(P_Eg)=P_E(\tilde{R}_t(g))\quad\text{and hence}\quad R_s(P_Eg)=P_E(R_s(g))\end{equation}
for every $t,s>0$. We denote by $\nu_{n,k}$ the uniform probability measure on $G_{n,k}$.

\begin{theorem}\label{th:projections-f}Let $\mu$ be an isotropic log-concave probability measure on $\mathbb{R}^n$ with density $f=e^{-\psi}$
such that $f(0)=\|f\|_{\infty}$. Then,
$$\int_{G_{n,n-1}}\|P_Ef\|_1d\nu_{n,n-1}(E)\ls C\sqrt{n},$$
where $C>0$ is an absolute constant.
\end{theorem}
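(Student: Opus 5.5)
We express $\|P_Ef\|_1$ through the super-level sets of $f$ and average their projections with Cauchy's surface area formula; Corollary~\ref{cor:H} then bounds the result. By the layer-cake formula and \eqref{eq:RP=PR},
$$\|P_Ef\|_1=\int_0^{\|f\|_\infty}\vol_{n-1}\big(\tilde R_t(P_Ef)\big)\,dt=\int_0^{\|f\|_\infty}\vol_{n-1}\big(P_E(\tilde R_t(f))\big)\,dt .$$
Integrating over $E\in G_{n,n-1}$ and using Fubini, we apply Cauchy's formula to each convex set $K_t=\tilde R_t(f)$ (which is bounded for $t>0$):
$$\int_{G_{n,n-1}}\vol_{n-1}(P_EK_t)\,d\nu_{n,n-1}(E)=\frac{\omega_{n-1}}{n\omega_n}\,S(K_t),$$
with $S(K_t)=\mathcal H^{n-1}(\partial K_t)$. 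If $K_t$ is lower dimensional, both sides are interpreted appropriately. This happens only for the single value $t=\|f\|_\infty$, so it has measure zero in $t$.

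This yields
$$\int_{G_{n,n-1}}\|P_Ef\|_1\,d\nu_{n,n-1}(E)=\frac{\omega_{n-1}}{n\omega_n}\int_0^\infty \mathcal H^{n-1}(\partial\{f\gr t\})\,dt\ls \frac{\omega_{n-1}}{n\omega_n}\,Cn$$
by Corollary~\ref{cor:H}. Since $\omega_{n-1}/\omega_n\approx\sqrt n$, the bound is $C'\sqrt n$. The hypothesis $f(0)=\|f\|_\infty$ enters only to make the identification $R_s(P_Ef)=P_E(R_s(f))$ consistent with the normalisation at the origin. For the $\tilde R_t$ version it is not really needed, but it guarantees $0\in\tilde R_t(f)$ for every level, so the projected level sets are nested and contain the origin.

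The main point requiring care is the measurability and justification of the exchange of integrals and the layer-cake step for $P_Ef$. Here $P_Ef$ is log-concave on $E$ because $P_E$ commutes with level sets, so its super-level sets are exactly $P_E(K_t)$. Measurability in $(t,E)$ follows from continuity of $E\mapsto\vol_{n-1}(P_EK)$ for a fixed convex body $K$, and the integrand is nonnegative, so Tonelli applies. One also checks that $P_Ef$ attains its maximum: $f$ is upper semi-continuous with exponential decay, so the supremum over $y\in E^\perp$ is a maximum. Once these points are settled, the estimate is a direct consequence of Corollary~\ref{cor:H}, which is the substantive input (via Theorem~\ref{th:nu-f} and Theorem~\ref{thm:gradient-upper}).
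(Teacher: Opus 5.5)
Your proposal is correct and follows the paper's proof: the layer-cake formula for $\|P_Ef\|_1$, the identity \eqref{eq:RP=PR}, Fubini, Cauchy's surface area formula on each level set, and then Corollary~\ref{cor:H} with $\omega_{n-1}/(n\omega_n)\approx 1/\sqrt{n}$. The only difference is that you work with $\tilde R_t$ directly instead of substituting $t=e^{-s}\|f\|_\infty$ and passing through $R_s$, which, as you correctly note, shows that the hypothesis $f(0)=\|f\|_\infty$ is not needed. (A minor point: convexity of the level sets only gives quasi-concavity of $P_Ef$, not log-concavity, but log-concavity is never used in your argument.)
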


\begin{proof}Cauchy's surface area formula asserts that
\begin{equation}\label{eq:cauchy}S(K)= \frac{n\omega_n}{\omega_{n-1}}\int_{G_{n,n-1}}\vol_{n-1}(P_E(K))\,d\nu_{n,n-1}(E)\end{equation}
for every convex body $K$ in $\mathbb{R}^n$. On the other hand, for any $E\in G_{n,n-1}$ we have
\begin{align*}\|P_Ef\|_1 &=\int_0^{\|f\|_{\infty}}\vol_{n-1}(\tilde{R}_t(P_Ef))\,dt=\int_0^{\infty}e^{-s}\|f\|_{\infty}\vol_{n-1}(R_s(P_Ef))\,ds\\
&=\int_0^{\infty}e^{-s}\|f\|_{\infty}\vol_{n-1}(P_E(R_s(f)))\,ds,
\end{align*}
taking into account \eqref{eq:RP=PR}. Combining the above, we write
\begin{align*}\int_{G_{n,n-1}}\|P_Ef\|_1d\nu_{n,n-1}(E) &=\int_{G_{n,n-1}}\int_0^{\infty}e^{-s}\|f\|_{\infty}\vol_{n-1}(P_E(R_s(f)))\,ds\,d\nu_{n,n-1}(E)\\
&=\int_0^{\infty}e^{-s}\|f\|_{\infty}\int_{G_{n,n-1}}\vol_{n-1}(P_E(R_s(f)))\,d\nu_{n,n-1}(E)\,ds\\
&=\frac{\omega_{n-1}}{n\omega_n}\int_0^{\infty}e^{-s}\|f\|_{\infty}S(R_s(f))\,ds\\
&=\frac{\omega_{n-1}}{n\omega_n}\int_0^\infty \mathcal{H}^{n-1}(\partial\{x: \ f(x)\geq t\}) \, dt\ls C\sqrt{n},
\end{align*}
from Corollary~\ref{cor:H} and the fact that $\frac{\omega_{n-1}}{n\omega_n}\approx \frac{1}{\sqrt{n}}$.
\end{proof}

Note that if $K$ is an isotropic convex body in $\mathbb{R}^n$ and $f=L_K^n\mathds{1}_{K/L_K}$ then $P_Ef=L_K^n\mathds{1}_{P_E(K/L_K)}$. Therefore,
\begin{align*}\int_{G_{n,n-1}}\|P_Ef\|_1d\nu_{n,n-1}(E) &=L_K\int_{G_{n,n-1}}\vol_{n-1}(P_E(K))\,d\nu_{n,n-1}(E)\\
&\approx \int_{G_{n,n-1}}\vol_{n-1}(P_E(K))\,d\nu_{n,n-1}(E)
\end{align*}
in this case. Consider the example of the cube $Q_n=\left[-\tfrac{1}{2},\tfrac{1}{2}\right]^n$. Then,
\begin{align*}\int_{G_{n,n-1}}\vol_{n-1}(P_E(Q_n))\,d\nu_{n,n-1}(E) &=\int_{S^{n-1}}\vol_{n-1}(P_{\xi^{\perp}}(Q_n))\,d\sigma(\xi)\\
&=\frac{\omega_{n-1}}{n\omega_n}S(Q_n)=\frac{2\omega_{n-1}}{\omega_n}\approx\sqrt{n}.
\end{align*}
This shows that the upper bound that we obtained in Theorem~\ref{th:projections-f} is optimal with respect to the dimension.

\bigskip

\noindent {\bf Acknowledgements.} The research of the first named author was funded, in whole or in part, by the French National Research Agency (ANR) under grant number ANR-24-ERCS-0011. For the purpose of open access dissemination, the author has applied a CC-BY open access license to any Author Accepted Manuscript (AAM) resulting from this submission. The third named author acknowledges support by a PhD scholarship from the National Technical University of Athens.

\bigskip 


\footnotesize
\bibliographystyle{amsplain}

\begin{thebibliography}{100}
\footnotesize

\bibitem{AL25} \textrm{G.\ Aishwarya and D.\ Li}, \textit{Entropic and functional forms of the dimensional Brunn-Minkowski inequality in Gauss space}, Math. Ann.  393 (2025), no.~3-4, 3025--3042.
\bibitem{Alonso-Gonzalez-Jimenez-Villa-2018} \textrm{D.\ Alonso-Guti\'{e}rrez, B.\ Gonz\'{a}lez Merino, C.\ Jim\'{e}nez and R.\ Villa}, 
\textit{John's ellipsoid and the integral ratio of a log-concave function}, J. Geom. Anal. 28 (2018), no.~2, 1182--1201.
\bibitem{Alves-Gonzalez-Villa-2023} \textrm{L.\ Alves da Silva, B.\ Gonz\'{a}lez Merino and R.\ Villa}, \textit{Some remarks on Petty projection of log-concave functions}, 
J. Geom. Anal. 33 (2023), no.~8, Paper No. 260.
\bibitem{AGA-book} \textrm{S.\ Artstein-Avidan, A.\ Giannopoulos and V.\ D.\ Milman},
\textit{Asymptotic Geometric Analysis, Vol. I}, Mathematical Surveys and Monographs, 202.
American Mathematical Society, Providence, RI, 2015. xx+451 pp.
\bibitem{AGA-book-2} \textrm{S.\ Artstein-Avidan, A.\ Giannopoulos and V.\ D.\ Milman},
\textit{Asymptotic Geometric Analysis, Vol. II}, Mathematical Surveys and Monographs, 261.
American  Mathematical Society, Providence, RI, 2021. xxxvii+645 pp.
\bibitem{Ball-1988} \textrm{K. M. Ball}, \textit{Logarithmically concave functions and sections of convex sets in ${\mathbb R}^n$},
Studia Math. {88} (1988), 69--84.
\bibitem{Bal93} \textrm{K.~M. Ball}, \textit{The reverse isoperimetric problem for Gaussian measure}, Discrete Comput. Geom. 10 (1993), no.~4, 411--420.

\bibitem{Bizeul-2025} \textrm{P.\ Bizeul}, \textit{The slicing conjecture via small ball estimates}, Ann. Probab. (to appear).\\
({\tt https://arxiv.org/abs/2501.06854})

\bibitem{Borell-1974} \textrm{C.\ Borell}, \textit{Convex measures on locally convex spaces},
Ark. Mat. {12} (1974), 239--252.
\bibitem{BLYZ12} \textrm{K.\ B\"or\"oczky, E.\ Lutwak, D.\ Yang, G.\ Zhang}, \textit{The log-Brunn-Minkowski inequality}, Adv. Math. 231 (2012), no.~3-4, 1974--1997.
\bibitem{Bourgain-1986} \textrm{J.\ Bourgain}, \textit{On high-dimensional maximal functions associated to convex bodies},
Amer. J. Math. {108} (1986), 1467--1476.

\bibitem{BGHT} \textrm{S.\ Brazitikos, A.\ Giannopoulos, A.\ Hmadi and N.\ Tziotziou}, \textit{On the maximal perimeter of isotropic log-concave probability measures}, 
Preprint ({\tt https://arxiv.org/abs/2602.03831}).
\bibitem{BGVV-book} \textrm{S.\ Brazitikos, A.\ Giannopoulos, P.\ Valettas and B-H.\ Vritsiou}, \textit{Geometry of isotropic
convex bodies}, Mathematical Surveys and Monographs, 196. American Mathematical Society, Providence, RI, 2014. xx+594 pp.
\bibitem{Buser-1982} \textrm{P.\ Buser}, \textit{A note on the isoperimetric constant}, Ann. Sci. \'{E}cole Norm. Sup.
{15} (1982), 213--230.
\bibitem{Cattiaux-Guillin-2020} \textrm{P.\ Cattiaux and A.\ Guillin}, \textit{On the Poincar\'{e} constant of log-concave measures}, 
Geometric aspects of functional analysis. Vol. I, 171--217, Lecture Notes in Math., 2256, Springer, Cham, 2020.


\bibitem{Cheeger-1970} \textrm{J.\ Cheeger}, \textit{A lower bound for the smallest eigenvalue of the Laplacian},
Problems in analysis (Sympos. in honor of Salomon Bochner, Princeton Univ., Princeton, N.J., 1969), pp. 195--199, Princeton Univ. Press, Princeton, NJ, 1970.
\bibitem{Colesanti-Fragala-2013} \textrm{A.\ Colesanti and I.\ Fragal\`{a}}, \textit{The first variation of the total mass of log-concave functions and related inequalities},
Adv. Math. 244 (2013), 708--749.
\bibitem{CLM17} \textrm{A.\ Colesanti, G.~V.\ Livshyts and A.\ Marsiglietti}, \textit{On the stability of Brunn-Minkowski type inequalities}, J. Funct. Anal. 273 (2017), no.~3, 1120--1139.
\bibitem{Cordero-Eskenazis-2025} \textrm{D.\ Cordero-Erausquin and A.\ Eskenazis}, \textit{Concavity principles for weighted marginals}, Preprint.\\
({\tt https://arxiv.org/abs/2506.16941})
\bibitem{Cordero-Klartag-2015} \textrm{D.\ Cordero-Erausquin and B.\ Klartag}, \textit{Moment measures}, J. Funct. Anal. 268 (2015), no.~12, 3834--3866.
\bibitem{Cordero-Rotem-2023} \textrm{D.\ Cordero-Erausquin and L.\ Rotem}, \textit{Improved log-concavity for rotationally invariant measures of symmetric 
convex sets}, Ann. Probab. 51 (2023), no.~3, 987--1003. 
\bibitem{EK08} \textrm{R. Eldan and B. Klartag}, \textit{Pointwise estimates for marginals of convex bodies}, J. Funct. Anal.  254 (2008), no.~8, 2275--2293.
\bibitem{Eskenazis-Moschidis-2021} \textrm{A.~Eskenazis and G.~Moschidis}, \textit{The dimensional Brunn-Minkowski inequality in
Gauss space}, J. Funct. Anal. 280 (2021), Issue~6, Paper No.~108914.
\bibitem{Evans} \textrm{L.~C. Evans and R.~F. Gariepy}, \textit{Measure theory and fine properties of functions}, Studies in Advanced Mathematics, CRC, Boca Raton, FL, 1992.
\bibitem{FR26} \textrm{T. Falah and L. Rotem}, \textit{On the functional Minkowski problem}, Calc. Var. Partial Differential Equations {\bf 65} (2026), no.~3, Paper No.~77.
\bibitem{Fradelizi-1997} \textrm{M.\ Fradelizi}, \textit{Sections of convex bodies through their centroid},
Arch. Math. (Basel) {69} (1997), no. 6, 515--522.

\bibitem{Gardner-Zvavitch-2010} \textrm{R. J. Gardner and A. Zvavitch}, \textit{Gaussian Brunn-Minkowski inequalities},
Trans. Amer. Math. Soc. 362 (10) (2010) 5333--5353.

\bibitem{Giannopoulos-Tziotziou-2025} \textrm{A.\ Giannopoulos and N.\ Tziotziou}, \textit{Moments of the Cram\'{e}r
transform of log-concave probability measures}, J.\ Funct.\ Anal.\ 290 (2026), Issue~10, Paper No.~111423. 
\bibitem{Giannopoulos-Tziotziou-2026} \textrm{A.\ Giannopoulos and N.\ Tziotziou}, \textit{Regular functional covering numbers}, 
International Mathematics Research Notices, Volume 2026, Issue 5, Paper: rnag035.
\bibitem{Giannopoulos-Pafis-Tziotziou-2025} \textrm{A.\ Giannopoulos, M.\ Pafis and N.\ Tziotziou}, 
\textit{The isotropic constant in the theory of high-dimensional convex bodies}, Bull. Hellenic Math. Soc. 69 (2025), 89--188.
\bibitem{GK25} \textrm{V. Gorev and E.~D. Kosov}, \textit{Functional analogs of the Shephard, Busemann-Petty and Milman problems}, J. Anal. Math. 157 (2025), no.~2, 511--536.
\bibitem{Guan-preprint} \textrm{Q.~Y.~Guan}, \textit{A note on Bourgain's slicing problem}, Preprint ({\tt https://arxiv.org/abs/2412.09075}).
\bibitem{Klartag-2007} \textrm{B.\ Klartag}, \textit{A central limit theorem for convex sets}, Invent. Math. {168} (2007), 91--131.
\bibitem{Klartag-2023} \textrm{B.~Klartag}, \textit{Logarithmic bounds for isoperimetry and slices of convex sets}. Ars Inveniendi
Analytica, Paper No. 4, 17pp, 2023.
\bibitem{Klartag-Lehec-2025} \textrm{B.~Klartag and J.~Lehec}, \textit{Affirmative resolution of Bourgain's slicing problem using
Guan's bound}, Geom. Funct. Anal. (GAFA), vol.~35, (2025), 1147--1168.
\bibitem{KL25} \textrm{B.~Klartag and J.~Lehec}, \textit{Thin-shell bounds via parallel coupling}, Preprint.\\ 
({\tt https://arxiv.org/abs/2507.15495})



\bibitem{Kolesnikov-Livshyts-2021} \textrm{A.\ V.\ Kolesnikov and G.\ V.\ Livshyts}, \textit{On the Gardner-Zvavitch conjecture: symmetry in inequalities of Brunn-Minkowski type}, 
Adv. Math. 384 (2021), Paper No. 107689, 23 pp.

\bibitem{Kolesnikov-Milman-2018} \textrm{A.\ V.\ Kolesnikov and E.\ Milman}, \textit{Poincar\'{e} and Brunn-Minkowski inequalities on the boundary of weighted Riemannian manifolds}, 
Amer. J. Math. 140 (2018), no. 5, 1147--1185. 
\bibitem{Kolesnikov-Milman-2022} \textrm{A.\ V.\ Kolesnikov and E.\ Milman}, \textit{Local $L^p$-Brunn--Minkowski inequalities for $p<1$}, 
Mem. Amer. Math. Soc. 277 (2022), no.~1360, v+78 pp. 

\bibitem{Ledoux-1994} \textrm{M.\ Ledoux}, \textit{A simple analytic proof of an inequality by P. Buser},
Proc. Am. Math. Soc. {121} (1994), 951--959.
\bibitem{Liv13} \textrm{G.~V. Livshyts}, \textit{Maximal surface area of a convex set in $\mathbb{R}^n$ with respect to exponential rotation invariant measures}, J. Math. Anal. Appl. {404} (2013), no.~2, 231--238.
\bibitem{Liv14} \textrm{G.~V. Livshyts}, \textit{Maximal surface area of a convex set in $\mathbb{R}^n$ with respect to log concave rotation invariant measures}, in  Geometric aspects of functional analysis, 355--383, Lecture Notes in Math., 2116, Springer, Cham.
\bibitem{Liv15} \textrm{G.~V. Livshyts}, \textit{Maximal surface area of polytopes with respect to log-concave rotation invariant measures}, Adv. in Appl. Math. 70 (2015), 54--69.
\bibitem{Liv21} \textrm{G.~V. Livshyts}, \textit{Some remarks about the maximal perimeter of convex sets with respect to probability measures}, Commun. Contemp. Math.  23 (2021), no.~5, Paper No. 2050037, 19 pp.
\bibitem{Livshyts-2023} \textrm{G.\ V.\ Livshyts}, \textit{A universal bound in the dimensional Brunn-Minkowski inequality for log-concave measures}, 
Trans. Amer. Math. Soc. 376 (2023), no. 9, 6663--6680. 
\bibitem{Liv24} \textrm{G.~V. Livshyts}, \textit{On a conjectural symmetric version of Ehrhard's inequality}, Trans. Amer. Math. Soc.  377 (2024), no.~7, 5027--5085.
\bibitem{Livshyts-Marsiglietti-Nayar-Zvavitch-2017} \textrm{G.\ Livshyts, A.\ Marsiglietti, P.\ Nayar and A.\ Zvavitch},
\textit{On the Brunn--Minkowski inequality for general measures with applications to new isoperimetric-type inequalities}, 
Trans. Amer. Math. Soc. 369 (2017), no.~12, 8725--8742.
\bibitem{MMRR25} \textrm{A.\ Malliaris, J.\ Melbourne, C.\ Roberto and M.\ Roysdon}, \textit{Functional liftings of restricted geometric inequalities}, Preprint ({\tt https://arxiv.org/abs/2508.15247}).
\bibitem{Mazya-1960} \textrm{V. G. Maz'ya}, \textit{Classes of domains and imbedding theorems for function spaces},
Dokl. Acad. Nauk SSSR (Engl. transl. Soviet Math. Dokl., 1 (1961) 882--885) {3} (1960), 527--530.
\bibitem{Mazya-1962} \textrm{V. G. Maz'ya}, \textit{The negative spectrum of the higher-dimensional Schr\"{o}dinger
 operator}, Dokl. Akad. Nauk SSSR {144} (1962), 721--722.
\bibitem{EMilman-2009} \textrm{E.\ Milman}, \textit{On the role of convexity in isoperimetry, spectral-gap and concentration}, Invent. Math. 177 (2009), 1--43.
\bibitem{Naz03} \textrm{F.~L. Nazarov}, \textit{On the maximal perimeter of a convex set in $\mathbb{R}^n$ with respect to a Gaussian measure}, in Geometric aspects of functional analysis, 169--187, Lecture Notes in Math., 1807, Springer, Berlin.
\bibitem{RW98} \textrm{R.~T. Rockafellar and R.~J.-B. Wets}, \textit{Variational analysis},  Grundlehren der mathematischen Wissenschaften, 317. Springer-Verlag, Berlin, 1998. xiv+733 pp. 
\bibitem{Rotem-2022} \textrm{L.\ Rotem}, \textit{Surface area measures of log-concave functions}, J. Anal. Math. 147 (2022), no.~1, 373--400.
\bibitem{Rotem-2023} \textrm{L.\ Rotem}, \textit{The anisotropic total variation and surface area measures}, Geometric aspects of functional analysis, 297--312, 
Lecture Notes in Math., 2327, Springer, Cham, 2023.
\bibitem{Schneider-book} \textrm{R.\ Schneider}, \textit{Convex Bodies: The Brunn-Minkowski Theory},
Second expanded edition. Encyclopedia of Mathematics and its Applications, 151.
Cambridge University Press, Cambridge, 2014. xxii+736 pp.
\end{thebibliography}

\bigskip

\thanks{\noindent {\bf Keywords:} Dimensional Brunn–Minkowski, log-concave measures, isotropic position, convex bodies, functional inequalities, 
moment measures, gradient estimates.}

\smallskip

\thanks{\noindent {\bf 2020 MSC:} Primary 52A23; Secondary 52A38, 52A40, 60E15, 26D15, 46B06.}

\bigskip

\bigskip 

\medskip 

\noindent \textsc{Alexandros \ Eskenazis}: Institut de Math\'{e}matiques de Jussieu, Sorbonne Universit\'{e}, Paris, 75252, France.

\smallskip

\noindent \textit{E-mail:} \texttt{alexandros.eskenazis@imj-prg.fr}

\bigskip

\noindent \textsc{Apostolos \ Giannopoulos}: School of Applied Mathematical and Physical Sciences, National Technical University of Athens, Department of Mathematics, Zografou Campus, GR-157 80, Athens, Greece.

\smallskip

\noindent \textit{E-mail:} \texttt{apgiannop@math.ntua.gr}

\bigskip

\noindent \textsc{Natalia \ Tziotziou}: School of Applied Mathematical and Physical Sciences, National Technical University of Athens, Department of Mathematics, Zografou Campus, GR-157 80, Athens, Greece.

\smallskip

\noindent \textit{E-mail:} \texttt{tziotziounatalia@mail.ntua.gr}

\end{document}